\numberwithin{equation}{section}
\newcommand{\R}{\mathbb{R}}
\newcommand{\Z}{\mathbb{Z}}
\newcommand{\stab}{\textrm{stab}}
\newcommand{\Mod}[1]{\ \left(\mathrm{mod}\ #1\right)}
\DeclareMathOperator{\img}{im}
\DeclareMathOperator{\im}{im}
\renewcommand{\Mod}[1]{\ (\mathrm{mod}\ #1)}
\newcommand{\cvector}[3]{\begin{pmatrix} #1 \\ #2 \\ #3 \end{pmatrix}}
\newcommand{\ctwovector}[2]{\begin{pmatrix} #1 \\ #2 \end{pmatrix}}
\NewDocumentCommand{\ccvector}{>{\SplitArgument{2}{,}}m}
 {
  \ccvector_main:nnn #1
 }
\NewDocumentCommand{\ccvector_main:nnn}{mmm}
 {
  \begin{pmatrix}
  #1 \\ #2 \\ #3
  \end{pmatrix}
 }
\newtheorem{thm}{Theorem}[section]
\newtheorem{cor}[thm]{Corollary}
\newtheorem{defn}[thm]{Definition}
\newtheorem{ex}[thm]{Example}
\newtheorem{lem}[thm]{Lemma}
\newtheorem{prop}[thm]{Proposition}
\newtheorem{rem}[thm]{Remark}
\newtheorem{question}[thm]{Question}
\begin{document}


\title{Chip-firing and critical groups of signed graphs}

\author{Matthew Cho}
\address{University of California, San Diego}
\email{macho@ucsd.edu}
\author{Anton Dochtermann}
\address{Texas State University}
\email{dochtermann@txstate.edu}
\author{Ryota Inagaki}
\address{Massachusetts Institute of Technology}
\email{inaga270@mit.edu}
\author{Suho Oh}
\address{Texas State University}
\email{suhooh@txstate.edu}
\author{Dylan Snustad}
\address{University of Minnesota, Twin Cities}
\email{snust024@umn.edu}
\author{Bailee Zacovic}
\address{University of Michigan, Ann Arbor}
\email{bzacovic@umich.edu}

\date{\today}
\thanks{2020 \emph{Mathematics Subject Classification}. 05C50,  20K01, 05C22, 91A46}
\thanks{\emph{Key words and phrases}. Signed graphs, chip-firing, critical group}

\begin{abstract}
We study chip-firing on a signed graph $G_\phi$, employing a general theory of chip-firing on invertible matrices introduced by Guzm\'an and Klivans. 
Here a negative edge designates an adversarial relationship, so that firing a vertex incident to such an edge leads to a loss of chips at both endpoints.  
The chip-firing rule for $G_\phi$ is described by its reduced Laplacian matrix $L_{G_\phi}$, which also defines the critical group ${\mathcal K}(G_\phi)$. 
The valid chip configurations are given by the lattice points of a rational cone determined by $G_\phi$ and the underlying graph $G$. 
This gives rise to notions of \textit{critical} as well as \textit{$z$-superstable} configurations, both of which are counted by the determinant of $L_{G_\phi}$. We establish general results regarding these configurations, focusing on efficient methods of verifying the underlying properties. We then study the critical groups of signed graphs in the context of vertex switching and Smith normal forms. We use this to compute the critical groups of various classes of signed graphs including signed cycles, wheels, complete graphs, and fans, in the process generalizing results of Biggs and others.
\end{abstract}

\maketitle

\section{Introduction}\label{sec:introduction section}
Classical chip-firing is a single player game played on a connected graph $G$, where `chips' are placed on the vertices of $G$ and distributed to their neighbors via simple `firing' moves. The resulting discrete dynamical system has connections to many areas of algebra, combinatorics, and physics, and has recently found applications in divisor theory in algebraic geometry.  We refer to the recent texts  \cite{CorryPerk, Klivans} for an overview of the subject.

In the version of chip-firing most relevant to us, a single vertex of $G$ is distinguished as the \emph{sink}, and a number of chips are placed on each nonsink vertex. This distribution of chips is represented by an integer vector $\vec{c} \in \mathbb{Z}^{n}$ called a \textit{(chip) configuration}. If a non-sink vertex has at least as many chips as its degree, it can fire, passing one chip to each of its neighbors.  If no non-sink vertex can fire we say that the configuration $\vec{c}$ is \emph{stable}. 
If $G$ is a connected graph then any initial configuration eventually stabilizes, as chips are eventually passed to the sink vertex.
If we let $v_1, ..., v_{n}$ denote the nonsink vertices of $G$, we can describe the rules of chip-firing via $L_G$, the $n \times n$ \textit{reduced Laplacian} matrix of $G$.
The result of firing a vertex $v_i$ on a chip configuration $\vec{c}$ is encoded by $\vec{c} - L\vec{e_i}$, where $\vec{e_i}$ is the $i$th standard basis vector. The matrix $L_G$ defines an equivalence relation on the set of vectors in ${\mathbb Z}^{n}$, where $\vec{c}$ and $\vec{d}$ are \emph{firing equivalent} if $\vec{c} - \vec{d}$ is contained in the image of $L_G$. This defines the \emph{critical group} of $G$, given by $\mathcal{K}(G) := {\mathbb Z}^n/\im L_G$.

A configuration $\vec{c}$ is \emph{valid} (or \emph{effective}) if $c_i \geq 0$ for all $i = 1, \dots, n$. One is interested in finding distinguished valid configurations in each equivalence class $[\vec{c}] \in {\mathcal K}(G)$.   On the one hand, it can be shown that each $[\vec{c}]$ contains a unique valid configuration that is stable and \emph{critical}, meaning that it can be reached from a `sufficiently large' configuration $\vec{b}$.
The critical configurations of a graph $G$ can also be used to compute in $\mathcal{K}(G)$, as one can show that the stabilization of the sum of two critical configurations is critical.

On the other hand, one can show that each $[\vec{c}]$ contains a unique valid configuration that is \emph{superstable}, meaning that it is stable under \emph{set-firings}.  A superstable configuration is also the solution of a certain energy minimization problem, and can be seen to coincide with the notion of a \emph{$G$-parking function}.  For a connected graph $G$ there exists a simple bijection between the set of critical configurations and the set of superstable configurations, both of which are in bijection with the set of spanning trees of $G$.  

In recent years, chip-firing has been extended to more general settings, where the reduced Laplacian of a graph is replaced by other distribution matrices (see for instance \cite{gabrielov} and \cite{GuzKliMmatrices}). One uses such a matrix $M$ to define a firing rule that mimics the graphical setting: firing a `site' $v_i$ now takes a configuration $\vec{c}$ to $\vec{c} - M \vec{e_i}$. For a well-defined notion of chip-firing we require that $M$ satisfies an \emph{avalanche finite} property, so that repeated firings of any initial configuration eventually stabilize in an appropriate sense.  The class of matrices with this property are known as \emph{$M$-matrices}, and can be characterized in a number of ways (see Definition \ref{def:Mmatrix} below). In \cite{GuzKliMmatrices}, Guzm\'an and Klivans have shown that the chip-firing theory defined by an $M$-matrix leads to good notions of critical and superstable configurations.  An important subtlety here is that one must consider \emph{multiset} firings and the notion of `$z$-superstability', we review these concepts below.

\subsection{Our contributions.}
In this paper we study chip-firing on a \emph{signed graph} $G_\phi$, where each edge of a graph $G$ is assigned a positive or negative weight according to a function $\phi: E(G) \rightarrow \{+,-\}$. When a vertex fires, if one of its incident edges is negative, both that vertex and its neighbor lose a chip. Such a setup can be thought of as modeling a process where both antagonistic (negative) and cooperative (positive) relations exist. A signed graph $G_\phi$ comes with a (reduced) signed Laplacian matrix $L_{G_{\phi}}$ which, as above,  defines a chip-firing rule and also a critical group ${\mathcal K}(G_\phi)$.

The matrix $L_{G_\phi}$ is not necessarily an $M$-matrix (for instance, it will typically have positive entries off the diagonal), and hence does not come with a pre-existing notion of chip-firing that relates to its critical group (as far as we know, see also Section \ref{sec: other}).  However, signed graphs provide a natural setting to employ a more general theory of chip-firing developed by Guzm\'an and Klivans in \cite{GuzKlivans}. 
In this setup, one fixes a \emph{chip-firing pair} $(L,M)$ of $n \times n$ matrices, where $L$ is invertible and $M$ is an $M$-matrix. One uses $L$ to define the chip-firing rule, in the sense that `firing a vertex' is defined by subtracting a column of $L$. An important feature of this theory is that the class of \emph{valid} configurations is no longer simply integer vectors with nonnegative entries, but rather integer points in a cone $S^+$ determined by $LM^{-1}$.  Among the vectors in $S^+$ some are distinguished as $z$-superstable, some as critical. The definitions mimic the setup for graphs, and one can show that many of the desirable properties extend to this setting, we refer to Section \ref{sec:chipfiring pairs} for details.

Since $M$ is itself an $M$-matrix, one can also consider the chip-firing rule determined by $M$, where valid configurations are given by nonnegative integral vectors ${\mathbb Z}^n_{\geq 0}$. 
Our first result provides a way to compare $z$-superstable and critical configuration in these two settings. We emphasize that this result applies for any chip-firing pair $(L,M)$, not necessarily arising from graphs. We do require that $L$ and $M$ are both integral (in which case we refer to $(L,M)$ as an \emph{integral chip-firing pair}).

\newtheorem*{thm:KeyIdea}{Theorem \ref{thm:KeyIdea}}
\begin{thm:KeyIdea}
Suppose $(L,M)$ is a integral chip-firing pair, where $L$ is an invertible matrix and $M$ is an $M$-matrix. Suppose $\vec{c} \in S^+$ is a valid configuration. Then $\vec{c}$ is $z$-superstable if and only if $\lfloor ML^{-1} \vec{c} \rfloor$ is $z$-superstable for $M$. Similarly $\vec{c}$ is critical if and only if $\lfloor ML^{-1} \vec{c} \rfloor$ is critical for $M$.
\end{thm:KeyIdea}

We now wish to apply these ideas to the setting of signed graphs. For a signed graph $G_{\phi}$ with specified sink vertex $q$, we let $G$ denote the underlying (unsigned) graph.  To define a chip-firing pair $(L,M)$, we take $L = L_{G_{\phi}}$ to be the reduced signed Laplacian, and choose $M = L_G$ to be the reduced Laplacian of $G$ (where the reduction is taken relative to $q$). Results of \cite{GuzKlivans} tell us that the pair $(L,M)$ gives rise to notions of critical and $z$-superstable configurations, unique to each equivalence class determined by $L$ (see Definitions \ref{def:critical} and \ref{def:superstable}). Hence the number of such configurations is given by $\det L$, which can be counted by combinatorial objects that generalize spanning trees (see Theorem \ref{thm:Zaslavsky}). This approach to chip-firing on signed graphs was used by Moore in his thesis \cite{Ram}.

Recall that the valid configurations for the pair $(L,M)$ are integer points in the rational cone determined by $LM^{-1}$ (see Definition \ref{defn:valid}). Our next result says that, for the case of a signed graph, these vectors will always have nonnegative entries.

\newtheorem*{prop:positive}{Proposition \ref{prop:positive}}
\begin{prop:positive}
    For any signed graph $G_\phi$ with chosen sink $q$, we have $S^+ \subset {\mathbb Z}^n_{\geq 0}$, so that the set of valid configurations determined by the chip-firing pair $(L,M)$ is contained in the nonnegative orthant.
\end{prop:positive}

Next we turn to special configurations of a signed graph $G_\phi$. 
By definition, to check whether a given valid configuration $\vec{c} \in S^+$ is $z$-superstable, one must check firings of all nonempty \emph{multisets} of vertices. Hence a priori we have no algorithm to check for this property. However, we show that for the case of signed graphs, simple set (as opposed to multiset) firings suffice to check for $z$-superstability.

\newtheorem*{thm:zchigeneral}{Theorem \ref{thm:zchigeneral}}
\begin{thm:zchigeneral}
Suppose $G_{\phi}$ is a signed graph with sink $q$ and chip-firing pair $(L,M)$.  Then a valid configuration $\vec{c} \in S^+$ is $z$-superstable if and only if $ML^{-1}\vec{c} - M\vec{z} \not\geq \vec{0}$ for any nonzero vector $\vec{z}$ in $\{0, 1\}^{n}$. 
\end{thm:zchigeneral}

In the language of chip-firing, Theorem \ref{thm:zchigeneral} says that a configuration is $z$-superstable if and only if it is \emph{$\chi$-superstable}. 
We remark that Theorem \ref{thm:zchigeneral} can also be proved as a Corollary of Theorem \ref{thm:KeyIdea} by employing \cite[Theorem 4.4]{GuzKlivans}. Here we include a self-contained proof, in part to clarify some notational ambiguity in \cite{GuzKlivans}.
From this we obtain the following corollary to Theorem \ref{thm:KeyIdea}.

\newtheorem*{cor:KeyIdea2}{Corollary \ref{cor:KeyIdea2}}
\begin{cor:KeyIdea2}
Let $G_\phi$ be a signed graph with sink $q$ and underlying graph $G$. Suppose $\vec{c} \in S^+$ is a valid configuration. Then $\vec{c}$ is $z$-superstable if and only if $\lfloor ML^{-1} \vec{c} \rfloor$ is superstable for $G$. Similarly, $\vec{c}$ is critical if and only if $\lfloor ML^{-1} \vec{c} \rfloor$ is critical for $G$. 
\end{cor:KeyIdea2}

From the definition of $z$-superstable and critical configurations, it is not immediately clear how one would construct such configurations for a given signed graph $G_\phi$. However, Corollary \ref{cor:KeyIdea2} provides a way to check whether a given configuration $\vec{c} \in S^+$ is $z$-superstable (resp. critical).  It also leads to an algorithm for generating all $z$-superstable and critical configurations for a given signed graph $G_\phi$ with chosen sink $q$, answering a question posed in \cite{Ram}. We refer to Section \ref{sec:check} and Proposition \ref{prop:algorithm} for details.


We next further investigate the critical configurations of signed graphs. In classical chip-firing on graphs, one has a number of results to efficiently detect whether a configuration is critical.  For example, \cite[Theorem 2.6.3]{Klivans} says that a valid configuration $\vec{c}$ is critical if and only if the stabilization after firing the sink recovers $\vec{c}$. 
We can use our results to extend this to the setting of signed graphs. For a signed graph $G_\phi$, we let $\vec{s}$ denote the configuration obtained by firing the sink once (see Section \ref{sec:SinkFire}).
Applying Theorem \ref{thm:KeyIdea} we get the following criteria for checking criticality in the context of signed graphs.

\newtheorem*{prop:EOfSinkFiring0}{Proposition \ref{prop:EOfSinkFiring0}}
\begin{prop:EOfSinkFiring0}
Suppose $G_\phi$ is a signed graph with chip-firing pair $(L,M)$. Then a configuration 
$\vec{c} \in S^+$ is critical if and only if $\mbox{stab}_{S^+}(\vec{c}+LM^{-1}\vec{s}) = \vec{c}$.
\end{prop:EOfSinkFiring0}

Proposition  \ref{prop:EOfSinkFiring0} in particular implies that $LM^{-1} \vec{s}$ is always valid, and in fact has a simple combinatorical interpretation (see Lemma \ref{lem:sink}).   One can also ask whether $\vec{s}$ itself lies in $S^+$, i.e. whether firing the sink of $G_\phi$ gives a valid configuration. 
For the case of a signed graph $G_\phi$ it is in general difficult to decide whether $\vec{s}$ is valid. However, under certain conditions we can guarantee this property, see Corollary \ref{cor:SpecialSinkFire} for details.

In the context of classical chip-firing on a graph $G$, the `maximal critical' configuration $\vec{c}_{\text{max}}$, defined by  $(\vec{c}_{\text{max}})_i = \deg(v_i) - 1$, plays an important role. In particular $\vec{c}_{\text{max}}$ is a valid critical configuration such that $\vec{c} \leq \vec{c}_{\text{max}}$ (componentwise) for any stable configuration $\vec{c}$.  Furthermore, a configuration $\vec{c}$ is critical if and only if $\vec{c}_{\text{max}} - \vec{c}$ is $z$-superstable.
As shown in \cite{GuzKliMmatrices}, these correspondences generalize in an expected way to the setting of chip-firing on $M$-matrices.
Although these concepts do not seem to extend to general signed graphs, we do recover the notion of a ``maximal critical configuration'' for certain \emph{signed-regular graphs}. We refer to Corollary \ref{cor:MaxCriticalConfig} for details.

\subsection{Critical groups of signed graphs}
In the second part of our paper we study (and compute) critical groups of signed graphs. Recall that  if $G_\phi$ is a signed graph with specified sink $q$ and nonsink vertices $\{v_1, \dots, v_n\}$, its reduced Laplacian $L = L_{G_\phi}$ defines the critical group ${\mathcal K}(G_\phi) := {\mathbb Z}^n/L$. 
In Theorem \ref{thm:CriticalityIdentity} we provide a way to compute the identity of ${\mathcal K}(G_\phi)$ in terms of the identity of ${\mathcal K}(G_\phi)$. In Lemma \ref{lem:SmithBalanced} we show that critical groups are invariant under \emph{vertex switching}, an operation on a signed graph that plays an important role in the theory (see Definition \ref{def:switching}).

We use these results to calculate the critical groups for several classes of signed graphs (with choice of sink $q$), see Figure \ref{fig:AAA} for an illustration of some of the graphs that we consider.

\begin{figure}[h]
 \begin{center}

    \input{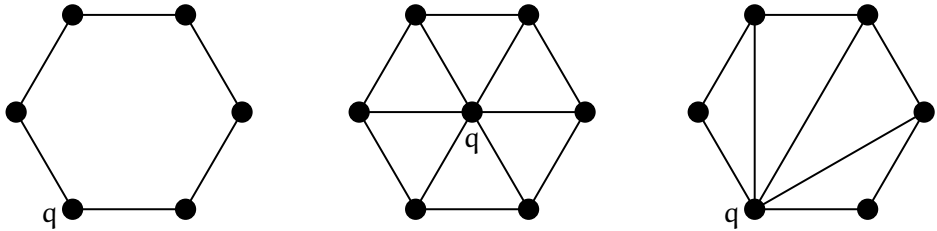}
    \captionsetup{width=1.0\linewidth}
  \captionof{figure}{The Cycle $C_6,$ the Wheel $F_6,$ and the Fan $F_6$.}
  \label{fig:AAA}

 \end{center}
 \end{figure}
 
From Lemma \ref{lem:SmithBalanced} we see that if $G_\phi$ is a connected signed graph with sink $q$ such that $G \backslash q$ is a tree, then there exists an isomorphism of critical groups
\[{\mathcal K}(G_\phi) \cong {\mathcal K}(|G|) \cong {\mathcal K}(G).\]
From this we observe that for any signed cycle $(C_{n})_\phi$, and any choice of sink $q$, we have $\mathcal{K}((C_{n})_\phi) \cong \mathbb{Z}_{n}$, see Proposition~\ref{prop:Cyclecrit}. We also observe a duality between the $z$-superstable and critical configurations for certain signed cycles, see Theorem~\ref{thm:PetitDuality}.

We next consider complete graphs.  The critical group of the unsigned graph $K_n$ is well-known, and although understanding critical groups of arbitrary \emph{signed} complete graphs seems difficult, we are able to compute them for a certain class. In what follows we let $-K_n$ denote the signed complete graph with sink $q$ such that all edges not incident to $q$ have a negative sign.

\newtheorem*{prop:Completecrit}{Proposition \ref{prop:Completecrit}}
\begin{prop:Completecrit}
Suppose $n \geq 2$ and let $(K_{n})_\phi$ be a signed complete graph with sink $q$ that is switching equivalent to $-K_{n}$. 
Then 
\[{\mathcal K}((K_{n})_\phi) \cong \mathbb{Z}_{n-2}^{n-3} \oplus  \mathbb{Z}_{(n-2)(2n-3)}.\]
\end{prop:Completecrit}

We next consider signed wheels, where the sink vertex is taken to be the center (hub) vertex.  Here we can completely describe the possible critical groups, providing a generalization of results of Biggs \cite{Biggs}. In what follows, we let $f_n$ and $\ell_n$ denote the $n$th Fibonacci and $n$th Lucas numbers, respectively (see Definitions \ref{defn:Fib} and \ref{defn:Luc}).

\newtheorem*{thm:wheelresult}{Theorem \ref{thm:wheelresult}}
\begin{thm:wheelresult}
Let  $(W_n)_{\phi}$ be any signed wheel with the hub vertex as the sink.
Then for $n\geq 3$ we have:
$$\mathcal{K}((W_n)_{\phi}) = \begin{cases}
\mathbb{Z}_{f_n} \oplus \mathbb{Z}_{5f_n}, & \mbox{$n$ is odd and $(W_n)_{\phi}$ is unbalanced} \\
\mathbb{Z}_{\ell_n} \oplus \mathbb{Z}_{\ell_n}, & \mbox{$n$ is even and $(W_n)_{\phi}$ is unbalanced}\\\mathbb{Z}_{\ell_n} \oplus \mathbb{Z}_{\ell_n}, & \mbox{$n$ is odd and $(W_n)_{\phi}$ is balanced}\\
\mathbb{Z}_{f_n} \oplus \mathbb{Z}_{5f_n}, & \mbox{$n$ is even and $(W_n)_{\phi}$ is balanced} \end{cases}$$ 
\end{thm:wheelresult}

Finally, we compute the critical groups of signed fans where the sink is the ``handle'' (see Figure \ref{fig:AAA}). Here we again obtain a complete answer.

\newtheorem*{thm:FanGroup}{Theorem \ref{thm:FanGroup}}
\begin{thm:FanGroup}
Let $(F_n)_{\phi}$ be any signed fan graph with sink given by the vertex of degree $n-1$. Then for $n \geq 1$ we have $\mathcal{K}((F_n)_{\phi}) \cong \mathbb{Z}_{f_{2n}}$.
\end{thm:FanGroup}

\subsection{Example} \label{sec:example}
We end this introduction with a worked example to illustrate our constructions and results, and also to emphasize how our theory differs from chip-firing on ordinary graphs. We consider the signed graph $G_\phi$ with chosen sink $q$ as depicted in Figure \ref{fig:chinotz}. For this graph the relevant matrices $L = L_{G_\phi}$, $M = L_G$, and $LM^{-1}$ are given below.
\[L = \begin{pmatrix} 3 & +1 & -1 \\ +1 & 2 & -1 \\ -1 & -1 & 3 \end{pmatrix}; \quad \quad M = \begin{pmatrix} 3 & -1 & -1 \\ -1 & 2 & -1 \\ -1 & -1 & 3 \end{pmatrix}; \quad \quad LM^{-1} = \begin{pmatrix} 2 & 2 & 1 \\ \frac{5}{4} & 2 & \frac{3}{4} \\ 0 & 0 & 1 \end{pmatrix} \]
\begin{figure}[h]
 \begin{center}
    \tikzset{every picture/.style={line width=0.75pt}} 

\begin{tikzpicture}[x=0.65pt,y=0.65pt,yscale=-1,xscale=1]

\draw [color={rgb, 255:red, 65; green, 117; blue, 5 }  ,draw opacity=1 ][line width=1.5]    (340.62,185.84) -- (256.02,100.88) ;
\draw [color={rgb, 255:red, 65; green, 117; blue, 5 }  ,draw opacity=1 ][line width=1.5]    (256.02,100.88) -- (256.02,185.84) ;
\draw [color={rgb, 255:red, 65; green, 117; blue, 5 }  ,draw opacity=1 ][line width=1.5]    (340.62,100.88) -- (340.62,185.84) ;
\draw [color={rgb, 255:red, 208; green, 2; blue, 27 }  ,draw opacity=1 ][line width=1.5]    (256.02,100.88) -- (340.62,100.88) ;
\draw [color={rgb, 255:red, 65; green, 117; blue, 5 }  ,draw opacity=1 ][line width=1.5]    (256.02,185.84) -- (340.62,185.84) ;
\draw  [fill={rgb, 255:red, 0; green, 0; blue, 0 }  ,fill opacity=1 ] (251.01,100.88) .. controls (251.01,98.07) and (253.25,95.79) .. (256.02,95.79) .. controls (258.79,95.79) and (261.03,98.07) .. (261.03,100.88) .. controls (261.03,103.69) and (258.79,105.97) .. (256.02,105.97) .. controls (253.25,105.97) and (251.01,103.69) .. (251.01,100.88) -- cycle ;
\draw  [fill={rgb, 255:red, 0; green, 0; blue, 0 }  ,fill opacity=1 ] (335.61,100.88) .. controls (335.61,98.07) and (337.86,95.79) .. (340.62,95.79) .. controls (343.39,95.79) and (345.63,98.07) .. (345.63,100.88) .. controls (345.63,103.69) and (343.39,105.97) .. (340.62,105.97) .. controls (337.86,105.97) and (335.61,103.69) .. (335.61,100.88) -- cycle ;
\draw  [fill={rgb, 255:red, 0; green, 0; blue, 0 }  ,fill opacity=1 ] (251.01,185.84) .. controls (251.01,183.03) and (253.25,180.75) .. (256.02,180.75) .. controls (258.79,180.75) and (261.03,183.03) .. (261.03,185.84) .. controls (261.03,188.65) and (258.79,190.93) .. (256.02,190.93) .. controls (253.25,190.93) and (251.01,188.65) .. (251.01,185.84) -- cycle ;
\draw  [fill={rgb, 255:red, 0; green, 0; blue, 0 }  ,fill opacity=1 ] (335.61,185.84) .. controls (335.61,183.03) and (337.86,180.75) .. (340.62,180.75) .. controls (343.39,180.75) and (345.63,183.03) .. (345.63,185.84) .. controls (345.63,188.65) and (343.39,190.93) .. (340.62,190.93) .. controls (337.86,190.93) and (335.61,188.65) .. (335.61,185.84) -- cycle ;

\draw (242.37,194.33) node [anchor=north west][inner sep=0.75pt]    {$q$};
\draw (343.57,136.21) node [anchor=north west][inner sep=0.75pt]    {$+$};
\draw (238.08,136.21) node [anchor=north west][inner sep=0.75pt]    {$+$};
\draw (292.12,189.27) node [anchor=north west][inner sep=0.75pt]    {$+$};
\draw (291.15,84.04) node [anchor=north west][inner sep=0.75pt]    {$-$};
\draw (296.01,125.71) node [anchor=north west][inner sep=0.75pt]    {$+$};
\draw (239.37,79.0) node [anchor=north west][inner sep=0.75pt]    {$v_{1}$};
\draw (338,80.0) node [anchor=north west][inner sep=0.75pt]    {$v_{2}$};
\draw (338.62,194.33) node [anchor=north west][inner sep=0.75pt]    {$v_3$};

\end{tikzpicture}
    \captionsetup{width=1.0\linewidth}
  \captionof{figure}{ An example of a signed graph $G_{\phi}$ where $|{\mathcal K}(G_{\phi})|$ is strictly larger than the number of spanning trees of the underlying graph.}
  \label{fig:chinotz}
 \end{center}
\end{figure}
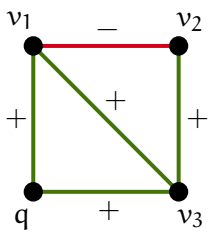

For the underlying graph $G$ (with sink $q$), we have $\det M = 8$, and hence we expect 8 critical (resp. superstable) configurations corresponding to the 8 spanning trees of $G$. Indeed, a calculation shows that the critical configurations on $G$ are
$$\ccvector{2,1,2}, \ccvector{1,1,2}, \ccvector{2,0,2}, \ccvector{2,1,1}, \ccvector{0,1,2}, \ccvector{2,1,0}, \ccvector{1,0,2} \ccvector{2,0,1},$$ 
whereas the superstable configurations on $G$ are 
$$\ccvector{0,0,0}, \ccvector{1,0,0}, \ccvector{0,1,0}, \ccvector{0,0,1}, \ccvector{2,0,0}, \ccvector{0,0,2}, \ccvector{1,1,0} \ccvector{0,1,1}.$$

Considering now the signed graph $G_\phi$, one can check that $\det L = 12$ and hence in this case $|{\mathcal K}(G_\phi)|$ is strictly larger than the number of spanning trees of the underlying graph $G$. Integral row operations can be used to reduce $L$ to Smith normal form to show that $\mathcal{K}(G_{\phi}) \cong \mathbb{Z}_{12}$.
Hence we expect 12 critical configurations and 12 $z$-superstable configurations for $G_\phi$, one for each equivalence class in ${\mathcal K}(G_\phi)$. Indeed we find that the critical configurations of the \emph{signed} graph $G_\phi$ are $$\left\{ \ccvector{7,6,2}, \ccvector{8,6,2}, \ccvector{8,6,1}, \ccvector{6,5,2}, \ccvector{7,5,1}, \ccvector{9,7,0}, \ccvector{6,4,2}, \ccvector{7,5,2}, \ccvector{9,7,2}, \ccvector{9,7,1}, \ccvector{8,6,0}, \ccvector{6,4,1} \right\}.$$ 

A calculation shows that $\begin{pmatrix} 7 \\ 6 \\ 2 \end{pmatrix}$, $\begin{pmatrix} 8 \\ 6 \\ 1 \end{pmatrix}$, $\begin{pmatrix} 9 \\ 7 \\ 0 \end{pmatrix}$, and $\begin{pmatrix} 6 \\ 4 \\ 1 \end{pmatrix}$ are the distinct generators of ${\mathcal K}(G_\phi)$.  

The $z$-superstable configurations of $G_\phi$ turn out to be $$\left\{ \ccvector{7,5,0}, \ccvector{2,2,0}, \ccvector{5,4,0}, \ccvector{6,4,0}, \ccvector{4,3,0}, \ccvector{5,4,2}, \ccvector{0,0,0}, \ccvector{1,1,0}, \ccvector{3,3,0}, \ccvector{6,5,0}, \ccvector{4,3,2}, \ccvector{3,2,0} \right\}.$$ 

We have listed these $z$-stable configurations so that they are in the same equivalence class as the corresponding critical configurations listed above.
As an illustration of Corollary \ref{cor:KeyIdea2}, we consider the inverse images of these critical configurations under $LM^{-1}$ (i.e. we apply $ML^{-1}$), to obtain the following vectors with rational coordinates:
$$\left\{ \ccvector{\frac{2}{3},\frac{11}{6},2}, \ccvector{2,1,2}, \ccvector{\frac{7}{3},\frac{7}{6},1}, \ccvector{\frac{2}{3},\frac{4}{3},2}, \ccvector{\frac{7}{3},\frac{2}{3},1}, \ccvector{\frac{8}{3},\frac{11}{6},0}, \ccvector{2,0,2}, \ccvector{2,\frac{1}{2},2}, \ccvector{2,\frac{3}{2},2}, \ccvector{\frac{7}{3},\frac{5}{3},1}, \ccvector{\frac{8}{3},\frac{4}{3},0}, \ccvector{\frac{7}{3},\frac{1}{6},1} \right\}.$$ 

Similarly, applying $ML^{-1}$ to the superstable configurations yield:
$$\left\{ \ccvector{\frac{8}{3},\frac{5}{6},0}, \ccvector{0,1,0}, \ccvector{\frac{4}{3},\frac{7}{6},0}, \ccvector{\frac{8}{3},\frac{1}{3},0}, \ccvector{\frac{4}{3},\frac{2}{3},0}, \ccvector{\frac{2}{3},\frac{5}{6},2}, \ccvector{0,0,0}, \ccvector{0,\frac{1}{2},0}, \ccvector{0,\frac{3}{2},0}, \ccvector{\frac{4}{3},\frac{5}{3},0}, \ccvector{\frac{2}{3},\frac{1}{3},2}, \ccvector{\frac{4}{3},\frac{1}{6},0} \right\}.$$ 

According to Corollary \ref{cor:KeyIdea2}, any vector obtained by taking the floor of each entry will result in a critical (resp. superstable) configuration of the underlying graph $G$. For instance taking the floor of the critical configuration $\ccvector{\frac{2}{3},\frac{11}{6},2}$ yields $\ccvector{0,1,2}$, which indeed is critical for $G$.

From Proposition \ref{prop:EOfSinkFiring0}, we have another way of checking for criticality. For this note that $\vec{s} = \ccvector{1,0,1}$, so that $LM^{-1} \vec{s} = \ccvector{3,2,1}$. Now for instance, to verify the criticality of $\ccvector{7,6,2}$ one can check that 
$$\mbox{stab}_{S^+}\left(\ccvector{7,6,2} + \ccvector{3,2,1}\right) = \mbox{stab}_{S^+}\left(\ccvector{10,8,3}\right) = \ccvector{7,6,2}.$$

We emphasize that (as opposed to the unsigned setting) our choice of sink $q$ determines much of the structure discussed above, and in particular even the cardinality of the critical group can change.  For example, if we consider the same signed graph $G_\phi$ as above with sink $q = v_3$, the relevant matrices become

\[L = \begin{pmatrix} 2 & -1 & 0 \\ -1 & 3 & +1 \\ 0 & +1 & 2 \end{pmatrix}; \quad \quad M = \begin{pmatrix} 2 & -1 & 0 \\ -1 & 3 & -1 \\ 0 & -1 & 2 \end{pmatrix}; \quad \quad LM^{-1} = \begin{pmatrix} 1 & 0 & 0 \\ \frac{1}{4} & \frac{3}{2} & \frac{5}{4} \\ \frac{1}{2} & 1 & \frac{3}{2} \end{pmatrix}. \]

In this case $\det L = 8$, so for this choice of sink we get 8 critical and 8 superstable configuration. One can check that the resulting critical group is isomorphic to ${\mathbb Z}_8$.

\subsection{Organization}
The rest of the paper is organized as follows. In Section \ref{sec:prelimsection}, we review the basics of signed graph theory and also summarize the Guzm\'an-Klivans theory of chip-firing pairs. Here we also describe the setup for our study of chip-firing on signed graphs, and discuss other approaches from the literature.  In Section \ref{sec:generalizedResults}, we provide proofs of our main stability results discussed above.  In Section \ref{sec:groups} we study critical groups of signed graphs. We establish some basic properties and compute ${\mathcal K}(G)$ for several classes signed graphs including cycles, wheels, complete graphs, and fans. In Section \ref{sec:Further}, we discuss some open questions and potential avenues for future work.

\section{Preliminaries}\label{sec:prelimsection}

In this section we recall some basic notions from the theory of signed graphs. We assume the reader is familar with basic notions from graph theory. We also review the theory of chip-firing on invertible matrices that will be employed in our study.

\subsection{Signed graphs}

The systematic study of signed graphs was initiated by Zaslavsky in \cite{Zaslavsky}.  We recall some basics from the theory, and refer to \cite{Zaslavsky} for details.  By a \emph{simple graph} we mean a graph with no loops or multiple edges.

\begin{defn}
A signed graph $G_{\phi}$ consists of a simple graph $G =(V,E)$ equipped with a sign function $\phi : E \rightarrow \{+, -\}$. We let $|G|$ denote the signed graph with all positive edges. 
\end{defn}

Suppose $G_\phi$ is a signed graph with vertex set $V = \{v_1, \dots, v_n, v_{n+1}\}$. The \emph{Laplacian} $\tilde{L} = \tilde{L}_{G_\phi}$ is the $(n+1) \times (n+1)$ matrix given by
\[(\tilde{L})_{i, j}   = 
\begin{cases}
\deg(v_i), & \text{ if $i = j$} \\
-1, & \text{ if $\{v_i,v_j\} \in E$ and $\phi(\{v_i,v_j\}) = 1$} \\
1, & \text{ if $\{v_i, v_j\} \in E$ and $\phi(\{v_i,v_j\}) = -1$}
\end{cases}
\]
for $i, j = 1, 2, ..., n+1$. 

The matrix $\tilde{L}$ can also be obtained as $\tilde{L} = \delta \delta^T$, where $\delta$ is the \emph{signed incidence matrix} associated to $G_\phi$.
If we specify a sink vertex $q$ (without loss of generality assume $q = v_{n+1})$, the \emph{reduced Laplacian} $L = L_{G_\phi}$ is the $n \times n$ matrix obtained by removing the row and column corresponding to $q$.  This convention conflicts with other notations from the literature; here we use $L$ to denote the reduced Laplacian since it will be our main object of study.

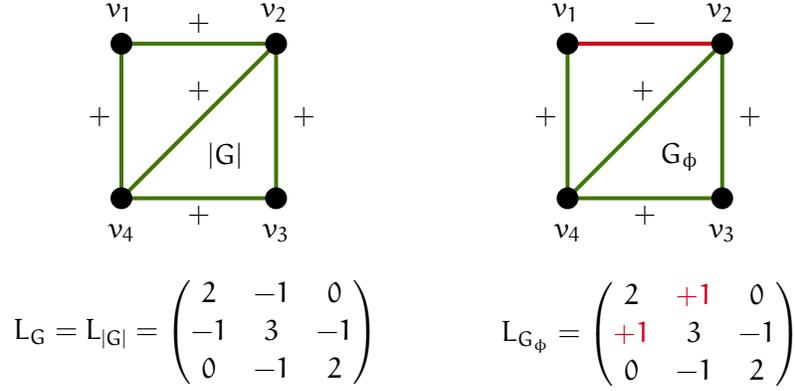
\begin{figure}
 \begin{center}
 
    \tikzset{every picture/.style={line width=0.75pt}} 

\begin{tikzpicture}[x=0.75pt,y=0.75pt,yscale=-1,xscale=1]

\draw  [color={rgb, 255:red, 65; green, 117; blue, 5 }  ,draw opacity=1 ][line width=1.5]  (165,64) -- (243,64) -- (243,142) -- (165,142) -- cycle ;
\draw [color={rgb, 255:red, 65; green, 117; blue, 5 }  ,draw opacity=1 ][line width=1.5]    (243,64) -- (165,142) ;
\draw  [fill={rgb, 255:red, 0; green, 0; blue, 0 }  ,fill opacity=1 ] (160,64) .. controls (160,61.24) and (162.24,59) .. (165,59) .. controls (167.76,59) and (170,61.24) .. (170,64) .. controls (170,66.76) and (167.76,69) .. (165,69) .. controls (162.24,69) and (160,66.76) .. (160,64) -- cycle ;
\draw  [fill={rgb, 255:red, 0; green, 0; blue, 0 }  ,fill opacity=1 ] (238,64) .. controls (238,61.24) and (240.24,59) .. (243,59) .. controls (245.76,59) and (248,61.24) .. (248,64) .. controls (248,66.76) and (245.76,69) .. (243,69) .. controls (240.24,69) and (238,66.76) .. (238,64) -- cycle ;
\draw  [fill={rgb, 255:red, 0; green, 0; blue, 0 }  ,fill opacity=1 ] (160,142) .. controls (160,139.24) and (162.24,137) .. (165,137) .. controls (167.76,137) and (170,139.24) .. (170,142) .. controls (170,144.76) and (167.76,147) .. (165,147) .. controls (162.24,147) and (160,144.76) .. (160,142) -- cycle ;
\draw  [fill={rgb, 255:red, 0; green, 0; blue, 0 }  ,fill opacity=1 ] (238,142) .. controls (238,139.24) and (240.24,137) .. (243,137) .. controls (245.76,137) and (248,139.24) .. (248,142) .. controls (248,144.76) and (245.76,147) .. (243,147) .. controls (240.24,147) and (238,144.76) .. (238,142) -- cycle ;
\draw  [color={rgb, 255:red, 65; green, 117; blue, 5 }  ,draw opacity=1 ][line width=1.5]  (390,64) -- (468,64) -- (468,142) -- (390,142) -- cycle ;
\draw [color={rgb, 255:red, 65; green, 117; blue, 5 }  ,draw opacity=1 ][line width=1.5]    (468,64) -- (390,142) ;
\draw  [fill={rgb, 255:red, 0; green, 0; blue, 0 }  ,fill opacity=1 ] (385,142) .. controls (385,139.24) and (387.24,137) .. (390,137) .. controls (392.76,137) and (395,139.24) .. (395,142) .. controls (395,144.76) and (392.76,147) .. (390,147) .. controls (387.24,147) and (385,144.76) .. (385,142) -- cycle ;
\draw  [fill={rgb, 255:red, 0; green, 0; blue, 0 }  ,fill opacity=1 ] (463,142) .. controls (463,139.24) and (465.24,137) .. (468,137) .. controls (470.76,137) and (473,139.24) .. (473,142) .. controls (473,144.76) and (470.76,147) .. (468,147) .. controls (465.24,147) and (463,144.76) .. (463,142) -- cycle ;
\draw [color={rgb, 255:red, 208; green, 2; blue, 27 }  ,draw opacity=1 ][line width=1.5]    (390,64) -- (468,64) ;
\draw  [fill={rgb, 255:red, 0; green, 0; blue, 0 }  ,fill opacity=1 ] (385,64) .. controls (385,61.24) and (387.24,59) .. (390,59) .. controls (392.76,59) and (395,61.24) .. (395,64) .. controls (395,66.76) and (392.76,69) .. (390,69) .. controls (387.24,69) and (385,66.76) .. (385,64) -- cycle ;
\draw  [fill={rgb, 255:red, 0; green, 0; blue, 0 }  ,fill opacity=1 ] (463,64) .. controls (463,61.24) and (465.24,59) .. (468,59) .. controls (470.76,59) and (473,61.24) .. (473,64) .. controls (473,66.76) and (470.76,69) .. (468,69) .. controls (465.24,69) and (463,66.76) .. (463,64) -- cycle ;

\draw (109,179.4) node [anchor=north west][inner sep=0.75pt]    {$L_{G} =L_{|G|} =\begin{pmatrix}
2 & -1 & 0\\
-1 & 3 & -1\\
0 & -1 & 2
\end{pmatrix}$};
\draw (355,180.4) node [anchor=north west][inner sep=0.75pt]    {$L_{G_{\phi }} =\begin{pmatrix}
2 & \textcolor[rgb]{0.82,0.01,0.11}{+1} & 0\\
\textcolor[rgb]{0.82,0.01,0.11}{+1} & 3 & -1\\
0 & -1 & 2
\end{pmatrix}$};
\draw (156,42.4) node [anchor=north west][inner sep=0.75pt]    {$v_{1}$};
\draw (157,153.4) node [anchor=north west][inner sep=0.75pt]    {$v_{4}$};
\draw (235,153.4) node [anchor=north west][inner sep=0.75pt]    {$v_{3}$};
\draw (234,42.4) node [anchor=north west][inner sep=0.75pt]    {$v_{2}$};
\draw (207,112.4) node [anchor=north west][inner sep=0.75pt]    {$| G|$};
\draw (197,47.4) node [anchor=north west][inner sep=0.75pt]    {$+$};
\draw (197,81.4) node [anchor=north west][inner sep=0.75pt]    {$+$};
\draw (197,144.4) node [anchor=north west][inner sep=0.75pt]    {$+$};
\draw (147,94.4) node [anchor=north west][inner sep=0.75pt]    {$+$};
\draw (250,94.4) node [anchor=north west][inner sep=0.75pt]    {$+$};
\draw (381,42.4) node [anchor=north west][inner sep=0.75pt]    {$v_{1}$};
\draw (382,153.4) node [anchor=north west][inner sep=0.75pt]    {$v_{4}$};
\draw (460,153.4) node [anchor=north west][inner sep=0.75pt]    {$v_{3}$};
\draw (459,42.4) node [anchor=north west][inner sep=0.75pt]    {$v_{2}$};
\draw (435,112.4) node [anchor=north west][inner sep=0.75pt]    {$G_{\phi }$};
\draw (422,47.4) node [anchor=north west][inner sep=0.75pt]    {$-$};
\draw (421,81.4) node [anchor=north west][inner sep=0.75pt]    {$+$};
\draw (422,144.4) node [anchor=north west][inner sep=0.75pt]    {$+$};
\draw (372,94.4) node [anchor=north west][inner sep=0.75pt]    {$+$};
\draw (475,94.4) node [anchor=north west][inner sep=0.75pt]    {$+$};

\end{tikzpicture}
    \captionsetup{width=1.0\linewidth}
  \captionof{figure}{A Signed Graph and its Underlying Unsigned Graph}
  \label{fig:signedunsigned}

 \end{center}
\end{figure}

An important construction in the study of signed graphs is the notion of vertex switching.

\begin{defn}\label{def:switching}
Suppose $G_\phi$ is a signed graph, and let $v \in V$ be a vertex.  If we reverse the sign of every edge incident to $v$ we obtain a new function $\psi: E \rightarrow \{+,-\}$, and the resulting signed graph $G_\psi$ is said to be obtained by \emph{switching} the vertex $v$. A pair of signed graphs $G_{\phi}$ and $ G_{\phi'}$ (with the same underlying unsigned graph $G$) are \emph{switching equivalent} if there exists a sequence of vertex switchings on $G_\phi$ that results in $G_{\phi'}$.
\end{defn}

Allowing for sequences of vertex switchings defines an equivalence relation on the set of signed graphs with a fixed underlying graph $G$. From \cite[Lemma 3.1]{Zaslavsky} we have the following.

\begin{lem}\label{lem: switchingclass}
Suppose $G$ is a connected graph and let $T$ be a spanning tree. Each switching equivalence class of signed graphs on $G$ has a unique representative which is positive on $T$. In particular, the number of switching equivalence classes on $G$ is given by $2^{|E|-|V| +1}$.
\end{lem}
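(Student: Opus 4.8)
The plan is to reduce the combinatorics of vertex switching to linear algebra over $\F_2$ and then exploit the tree structure. First I would record that switching at a vertex twice restores the original signs, and that the net effect of a sequence of switchings depends only on the subset $S \subseteq V$ of vertices switched an odd number of times. Identifying sign functions with vectors in $\F_2^E$ and subsets with their indicators $\mathbf{1}_S \in \F_2^V$, switching by $S$ reverses the sign of an edge $\{u,w\}$ precisely when exactly one of $u,w$ lies in $S$; that is, it adds the vector $d(\mathbf{1}_S) \in \F_2^E$ defined by $d(\mathbf{1}_S)(\{u,w\}) = \mathbf{1}_S(u) + \mathbf{1}_S(w)$. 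Two consequences I would isolate at the outset: $S$ and $V \setminus S$ induce the same switching, and $G_\phi$, $G_{\phi'}$ are switching equivalent exactly when their sign vectors differ by an element of $\operatorname{im} d$.

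For existence of a representative positive on $T$, given $G_\phi$ I would root the spanning tree $T$ at a vertex $r$, set $\mathbf{1}_S(r) = 0$, and process the remaining vertices moving away from the root. Each nonroot vertex $v$ has a unique parent edge $e$ in $T$, and I would choose $\mathbf{1}_S(v)$ so that $d(\mathbf{1}_S)(e)$ cancels the current sign of $e$. This determines $\mathbf{1}_S$ uniquely once the root value is fixed and yields a switching-equivalent $G_\psi$ positive on every edge of $T$.

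For uniqueness, suppose $G_\phi$ and $G_{\phi'}$ are switching equivalent and both positive on $T$, related by a subset $S$. Then $S$ fixes every tree edge, so $\mathbf{1}_S(u) = \mathbf{1}_S(w)$ for each edge $\{u,w\}$ of $T$; since $T$ is a connected spanning subgraph, $\mathbf{1}_S$ is constant on $V$, forcing $S = \emptyset$ or $S = V$. Both act trivially on all edges, hence $\phi = \phi'$, and each switching class contains a unique representative positive on $T$. The count then follows immediately: by existence and uniqueness the classes are in bijection with the signed graphs positive on $T$, and such a signed graph is specified by a free choice of sign on each of the $|E| - (|V|-1) = |E|-|V|+1$ edges off $T$, giving $2^{|E|-|V|+1}$ classes. (Equivalently, one can note $\ker d = \{\emptyset, V\}$ for connected $G$, so $\dim_{\F_2} \operatorname{im} d = |V|-1$ and the number of cosets of $\operatorname{im} d$ in $\F_2^E$ is $2^{|E|-|V|+1}$.)

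The main obstacle is not any single computation but setting up the dictionary in the first paragraph correctly: verifying that an arbitrary sequence of vertex switchings collapses to the single subset operation $\mathbf{1}_S \mapsto d(\mathbf{1}_S)$, and that switching equivalence is precisely equality modulo $\operatorname{im} d$. Once this is in place, the remaining steps are short, and the only genuinely structural input is the connectivity of $T$, used in the uniqueness argument to force $\mathbf{1}_S$ to be constant.
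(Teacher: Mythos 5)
Your proof is correct and complete: the reduction of switching to the coboundary map $d:\mathbb{F}_2^V\to\mathbb{F}_2^E$, the root-and-propagate construction of the representative positive on $T$, and the uniqueness argument via constancy of $\mathbf{1}_S$ on the connected spanning tree (with $\ker d=\{\emptyset,V\}$) are all sound, and the count $2^{|E|-|V|+1}$ follows as you say. The paper does not prove this lemma itself but cites it as Lemma 3.1 of Zaslavsky, whose argument is essentially the same $\mathbb{F}_2$-linear one you give, so there is nothing to compare beyond noting that your write-up is a valid self-contained substitute for the citation.
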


\begin{defn}
A signed graph $G_\phi$ is \emph{balanced} if it is switching equivalent to the all-positive graph $|G|$. 
\end{defn}

The well-known Kirchoff's Matrix-Tree Theorem says that the determinant of the reduced Laplacian matrix of a connected graph $G$ is given by the number of spanning trees of $G$ (see for instance \cite{Klivans} for connections to chip-firing). This has been generalized to the signed graph setting by Zaslavsky as follows.

\begin{thm}\cite[Theorem 8A.4]{Zaslavsky} \label{thm:Zaslavsky}
Suppose $G_{\phi}$ is a connected signed graph. Then the determinant of its (non-reduced) Laplacian $\tilde L$ is given by $$\det (\tilde L) = 
\sum_{\ell = 0}^{n}4^{\ell}b_{\ell},$$ where $b_{\ell}$ is the number of independent sets of edges that have $\ell$ cycles.
\end{thm}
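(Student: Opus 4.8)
The plan is to prove this via the Cauchy--Binet formula applied to the factorization $\tilde{L} = \delta\delta^T$, mirroring the classical proof of Kirchhoff's Matrix--Tree Theorem but tracking the extra contributions coming from negative cycles. First I would pin down the structure of the signed incidence matrix $\delta$, which is $|V|\times|E|$: the requirement $\delta\delta^T = \tilde{L}$ forces each edge $e = \{v_i,v_j\}$ to contribute a column with exactly two nonzero entries, both in $\{+1,-1\}$, having opposite signs when $\phi(e)=+$ (so the off-diagonal entry of $\tilde{L}$ is $-1$) and equal signs when $\phi(e)=-$ (giving off-diagonal $+1$). With this in hand, Cauchy--Binet yields
\[
\det(\tilde{L}) = \det(\delta\delta^T) = \sum_{\substack{S\subseteq E\\ |S| = |V|}} \det(\delta_S)^2,
\]
where $\delta_S$ is the square submatrix of $\delta$ on the columns indexed by $S$. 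The problem then reduces to evaluating $\det(\delta_S)^2$ for each spanning edge set $S$ of size $|V|$.

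Next I would reduce this to a per-component computation. Reordering rows (vertices) and columns (edges) to group by the connected components of $(V,S)$ makes $\delta_S$ block-diagonal, with a $|V_C|\times|E_C|$ block for each component $C$. A nonsquare block forces a linear dependence among its columns (they are supported on too few coordinates), so $\det(\delta_S)\neq 0$ requires $|E_C|=|V_C|$ for every component, i.e. each component is unicyclic. It then remains to treat a single unicyclic component, where the key subclaim is that its unique cycle yields a nonzero block determinant exactly when it is \emph{unbalanced} (negative), and in that case the block has $|\det| = 2$. The balanced case I would handle by switching: switching at a vertex $v$ negates the corresponding row of $\delta$, replacing $\delta$ by $D\delta$ for a diagonal sign matrix $D$ and hence $\tilde{L}$ by $D\tilde{L}D$, which leaves $\det\tilde{L}$ unchanged; a balanced cycle can thereby be switched to an all-positive cycle, whose incidence columns sum to zero exactly as in the unsigned case, forcing $\det(\delta_S)=0$.

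For the unbalanced case I would compute the block determinant directly. Repeated cofactor expansion along leaves (each step removes a degree-one vertex together with its unique incident edge, contributing a factor $\pm 1$) strips away the tree edges and reduces the block to the incidence matrix of the unique negative cycle; a short circulant-type computation, or an induction on the cycle length, shows this determinant is $\pm 2$. Consequently, if $S$ is spanning with every component unbalanced-unicyclic and there are $\ell$ components, then the number of cycles equals $\ell$ and $|\det(\delta_S)| = 2^{\ell}$, so $\det(\delta_S)^2 = 4^{\ell}$, while every other $S$ contributes $0$. Grouping the surviving terms of the Cauchy--Binet sum by their number of cycles $\ell$ gives $\det(\tilde{L}) = \sum_{\ell} 4^{\ell} b_{\ell}$, where $b_{\ell}$ counts the spanning edge sets of size $|V|$ that are independent in the signed-graphic matroid and contain exactly $\ell$ cycles.

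The main obstacle is the per-component value computation, namely establishing the clean dichotomy ``balanced cycle $\Rightarrow \det=0$, unbalanced cycle $\Rightarrow |\det|=2$'' and propagating it through the tree part. The switching-invariance of $\det\tilde{L}$ is the conceptual tool that makes the balanced case transparent and simultaneously confirms that the final count depends only on the switching class. The factor $2$ per negative cycle, which becomes $4^{\ell}$ after squaring in Cauchy--Binet, is precisely what distinguishes Zaslavsky's formula from the classical Matrix--Tree Theorem, where every spanning structure is a tree (the $\ell=0$ term) contributing $1$.
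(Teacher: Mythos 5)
The paper does not prove this statement; it is quoted directly from Zaslavsky \cite[Theorem 8A.4]{Zaslavsky}, so there is no internal proof to compare against. Your Cauchy--Binet argument is correct and is essentially the classical proof of the signed Matrix--Tree Theorem: the reduction to square per-component blocks, the balanced-cycle degeneracy handled via switching (row scaling of $\delta$ by a diagonal sign matrix), and the $|\det|=2$ contribution of each unbalanced cycle giving $4^{\ell}$ after squaring are all sound, with $b_{\ell}$ correctly interpreted as counting the size-$|V|$ independent sets of the signed-graphic matroid (spanning pseudoforests whose $\ell$ cycles are all unbalanced).
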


Here a set of edges is \emph{independent} if it forms an independent set in the underlying matroid defined by the signed incidence matrix of $G_\phi$, we refer to \cite{Zaslavsky} for details. This result has been further generalized by Chaiken \cite{MR666857}, where one considers the determinant of the matrix obtained by removing a certain set of rows and columns from $\tilde L$. We note that $L$ is the special case of removing the row and column corresponding to the sink $q$.
In our work, we will often consider signed graphs where every edge not incident to the sink is negative (and where edges incident to the sink have arbitrary sign function).  
We say that $G_\phi$ is \emph{negative}, and write $G_\phi = -G$, if all edges not incident to the sink have a negative sign.  Note that in this case all entries of the reduced Laplacian $L=L_{G_\phi}$ are positive. This matrix is often referred to as the \emph{signless Laplacian} of $G$ in the literature \cite{FundBapat} .

In the case of negative signed graphs with a sink, Theorem \ref{thm:Zaslavsky} can be rephrased in simpler terms (see remarks after \cite[Theorem 8A.4]{Zaslavsky}). In what follows a \emph{$TU$-subgraph} is a subgraph of $G$ whose components are trees or unicylic subgraphs containing a single odd cycle. We then have the following result from \cite{FundBapat}.

\begin{thm}\cite[Theorem 7.8]{FundBapat})\label{thm:MatrixTreeNice}
Suppose $G_\phi$ is a negative signed graph with sink $q$, and let $L = L_{G_\phi}$ denote its reduced Laplacian.  Then the determinant of $L$ is given by
\[\det(L) = \sum_H 4^{c(H)},\]
\noindent
where the sum runs over all spanning $TU$-subgraphs $H$ of $G$ with $c(H)$ unicyclic components, and one tree component that contains the sink $q$.
\end{thm}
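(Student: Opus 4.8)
The plan is to compute $\det(L)$ directly via the Cauchy--Binet formula applied to a reduced signed incidence matrix, which lets us read off the contribution of each edge subset combinatorially. Recall that $\tilde{L} = \delta\delta^T$, where $\delta$ is the signed incidence matrix: a positive edge $\{v_i,v_j\}$ contributes the column $\vec{e}_i - \vec{e}_j$ and a negative edge contributes $\vec{e}_i + \vec{e}_j$ (up to an overall sign). Deleting the row and column indexed by the sink $q$ from $\tilde{L}$ is the same as deleting the row of $q$ from $\delta$; writing $\delta_q$ for this $n \times |E|$ matrix we obtain $L = \delta_q\delta_q^T$. Cauchy--Binet then gives
\[
\det(L) = \sum_{S} \det\big(\delta_q[S]\big)^2,
\]
where $S$ ranges over all $n$-element subsets of $E$ and $\delta_q[S]$ is the square submatrix on the columns indexed by $S$.

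First I would analyze a single minor $\det(\delta_q[S])$. Since the incidence matrix is block diagonal with respect to the connected components of the spanning subgraph $(V,S)$, the minor factors as a product over these components, and in particular vanishes unless every block is square. For the component $C_0$ containing $q$, the block has one fewer row than the number of its vertices, so squareness forces $C_0$ to be a tree, in which case the reduced tree incidence matrix has determinant $\pm 1$. For each component $C_i$ not containing the sink, all its vertices index rows, so squareness forces $C_i$ to be unicyclic. A short edge count ($\sum e_i = n$, $\sum v_i = n+1$) shows that these conditions on all components hold simultaneously exactly when $(V,S)$ is a spanning $TU$-subgraph with a single tree component through $q$ and all remaining components unicyclic.

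The crux is the determinant of the full incidence matrix of a connected unicyclic signed component. I would show, by switching so that the spanning-tree edges become positive (a row scaling by $\pm 1$ that preserves $|\det|$ and preserves balance) and then reducing to a direct computation on the unique cycle, that this determinant is $0$ when the cycle is balanced and $\pm 2$ when it is unbalanced. Because every edge not incident to $q$ is negative, a cycle avoiding $q$ is a product of negative edges and is unbalanced precisely when it has odd length; this is exactly where the hypothesis $G_\phi = -G$ enters, turning ``unbalanced cycle'' into ``odd cycle.'' Consequently $S$ contributes a nonzero term exactly when it is a spanning $TU$-subgraph of the required form, and then
\[
\det\big(\delta_q[S]\big)^2 = \big(\pm 1\big)^2 \prod_{i=1}^{c(H)} \big(\pm 2\big)^2 = 4^{c(H)},
\]
where $c(H)$ is the number of unicyclic components. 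Summing over all such $S$ yields the claimed formula.

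The main obstacle is this minor computation: establishing the $\{0,\pm 1,\pm 2\}$ trichotomy for minors of the signed incidence matrix and correctly identifying which subsets $S$ survive. This is precisely the signed analogue of the classical matrix-tree argument, and it is also what underlies the references to Zaslavsky and Chaiken cited above; alternatively, one could deduce the statement by specializing Chaiken's all-minors matrix-tree theorem to the deletion of the sink's row and column, but the self-contained Cauchy--Binet derivation makes the role of the negativity hypothesis transparent.
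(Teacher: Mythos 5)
Your proposal is correct. Note that the paper itself offers no proof of this statement: it presents the theorem as a specialization of Zaslavsky's signed matrix-tree theorem and simply points to the remarks after \cite[Theorem 8A.4]{Zaslavsky} and to \cite[Theorem 7.8]{FundBapat}. Your Cauchy--Binet argument is essentially the standard derivation underlying those references: the factorization $L = \delta_q\delta_q^T$, the block decomposition of each $n$-column minor over the components of $(V,S)$, the edge count forcing a tree through $q$ and unicyclic components elsewhere (isolated non-sink vertices giving non-square blocks and hence zero), and the $\{0,\pm 1,\pm 2\}$ evaluation of the component determinants, with the negativity hypothesis converting ``unbalanced cycle'' into ``odd cycle.'' All of these steps are sound, so your writeup supplies a self-contained proof of a fact the paper only quotes.
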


\subsection{Chip-firing and critical groups of signed graphs}

Now suppose $G_\phi$ is a signed graph with sink $q$ and nonsink vertices $v_1, \dots, v_n$. The reduced signed Laplacian $L = L_{G_\phi}$ naturally defines a chip-firing rule as discussed above: given an integer configuration $\vec{c} \in {\mathbb Z}^n$, firing the vertex $i$ leads to the configuration
\[\vec{c}^\prime = \vec{c} - L\vec{e}_i,\]
obtained by subtracting the $i$th column of $L$.  Hence the vertex $v_i$ loses $\deg(i)$ number of chips, passing a chip to each adjacent vertex which shares a positive edge, and deleting a chip at each adjacent vertex which shares a negative edge. 

\begin{figure}[h]
 \begin{center}
 
    \input{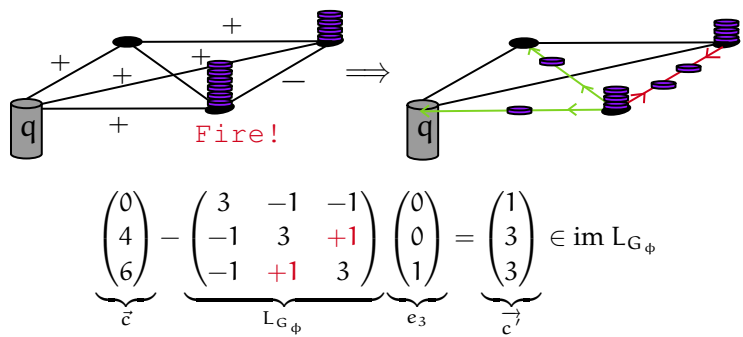}
    \captionsetup{width=1.0\linewidth}
  \captionof{figure}{Firing a Vertex}
  \label{fig:examplefire}

 \end{center}
\end{figure}

The matrix $L = L_{G_\phi}$ also defines an equivalence relation on integer vectors.  Namely for $\vec{c}_1, \vec{c}_2 \in {\mathbb Z}^n$ we say that
\[\vec{c}_1 \sim_L \vec{c}_2 \text{ if and only if } \vec{c}_2 = \vec{c}_1 - L\vec{z}, \text{ for some $\vec{z} \in {\mathbb Z}^n$}.\]
\noindent
The equivalence relation defined by $L$ leads to the following notion of a critical group of $G_\phi$.

\begin{defn}
Suppose $G_\phi$ is a signed graph with nonsink vertices $v_1, \dots, v_n$, and let $L = L_{G_\phi}$ denote its reduced signed Laplacian. The \emph{critical group} of $G_\phi$ is the abelian group
\[{\mathcal K}(G_\phi) = {\mathbb Z}^n/\im  L.\]
\end{defn}

If the underlying graph $G$ is connected, one can see that the matrix $L$ has rank $n$ by applying \cite[Theorem 8A.4]{Zaslavsky}. From this it follows that ${\mathcal K}(G_\phi)$ is a finite group of order $|\mathcal{K}(G_\phi)| = \det(L)$.

\begin{rem}\label{snf}
To compute the critical group of a signed graph $G_\phi$ with chosen sink $q$, one can perform integral row and column operations on the matrix $L_{G_\phi}$ to obtain its \emph{Smith normal form}:  
$$\begin{pmatrix} p_1 & 0 & 0 & \dots & 0\\
0 & p_2 & 0 & \dots & 0\\
0 & 0 & p_3 & \dots & 0\\
\vdots & \vdots & \vdots & \ddots & 0\\
0 & 0 & 0 & \dots & p_{n} \end{pmatrix}.$$
In this case the diagonal elements satisfy $p_i | p_{i+1}$ for all $i = 1, \dots, n-1$ and are called the \emph{invariant factors}. It then follows that
\[\mathcal{K}(G_\phi) \cong \Z/p_1\Z \oplus ... \oplus \Z/p_n\Z.\]
\end{rem}

\begin{rem}
Note that if $G$ has sink $q$ and $G \backslash \{q\}$ has more than one component, then the reduced Laplacian $L$ has a block matrix form.  The row operations that reduce $L$ to Smith Normal Form can be chosen to preserve this property, implying that the critical group ${\mathcal K}(G)$ is a direct sum of the critical groups of the underlying subgraphs.  Hence in our computations of ${\mathcal K}(G)$ we can assume that $G \backslash \{q\}$ is connected.
\end{rem}

By the Matrix Tree Theorem for signed graphs \ref{thm:Zaslavsky}, the order of ${\mathcal K}(G_\phi)$ also has a combinatorial interpretation. We note that ${\mathcal K}(G_\phi)$ (and even its order, given by $\det(L)$) depends on the choice of sink vertex $q$.

Our goal in this paper is to study the dynamics of chip-firing and the structure of the critical group of a signed graph $G_\phi$ in terms of various notions of stability, in analogy with the theory for graphs.  One could study the dynamics of chip-firing defined by $L_{G_\phi}$ using nonnegative integral vectors as valid configurations, but it is not hard to see that many desired properties break down. In particular, one does not have a natural representative for each element of ${\mathcal K}(G_\phi)$, as the next example illustrates.

\begin{ex}
Let $-C_3$ denote the 3-cycle with a single negative edge incident to the nonsink vertices. First note that  $$L_{-C_3}=\begin{pmatrix}2&1\\1&2\end{pmatrix}$$ and so $|{\mathcal K}(-C_3)| = 3$. If we allow all nonnegative vectors as valid configurations, we obtain an infinite collection of critical configurations in a single element of ${\mathcal K}(-C_3)$. To see this first note that all configurations of the form $(n,0)^T$ or $(0,n)^T$, where $n \in \mathbb{Z}_{\geq 0},$ are both critical and $z$-superstable. One can then check that all vectors of the form $(1 + 3k, 0)^T$ are in the same equivalence class determined by $L_{-C_3}$.
\end{ex}

Hence the naive approach of using nonnegative configurations to define chip-firing on signed graphs fails. However, signed graphs provide a natural setting to employ a theory of chip-firing on general invertible matrices developed in \cite{GuzKlivans}.  We review the basics of this theory next.

\subsection{Chip-firing pairs} \label{sec:chipfiring pairs}
In \cite{GuzKlivans}, Guzm\'an and Klivans develop a theory of chip-firing where the chip-firing rule is governed by an arbitrary invertible matrix.   Here one fixes a pair $(L,M)$ of $n \times n$ matrices, where $L$ is invertible and $M$ is an \emph{$M$-matrix}. To recall this latter concept, we have the following characterization from \cite{gabrielov, Plemmons}.

\begin{defn}\label{def:Mmatrix}
Suppose $M$ is an $n \times n$ matrix such that $(M)_{ii} > 0$ for all $i$ and $(M)_{ij} \leq 0$ for all $i \neq j$. Then $M$ is called an (invertible) \emph{$M$-matrix} if any of the following equivalent conditions hold:
\begin{enumerate}
    \item $M$ is avalanche finite;
    \item The real part of the eigenvalues of $M$ are positive;
    \item The entries of $M^{-1}$ are non-negative;
    \item There exists a vector $\vec{x} \in {\mathbb R}^{n}$ with ${\vec x} \geq \vec{0}$ such that $M \vec{x}$ has all positive entries.
\end{enumerate}
\end{defn}

The chip-firing model is determined by the choice of the pair $(L,M)$, which we call a \emph{chip-firing pair}. The relevant (chip) \emph{configurations} $\vec{c} \in \mathbb{Z}^{n}$ are simply integer vectors with $n$ entries, and chip-firing is dictated by the matrix $L$.  In this context the notion of a \emph{valid} configuration takes on a new meaning.

\begin{defn}\label{defn:valid}
Suppose $(L,M)$ is a chip-firing pair.  A configuration $\vec{c}$ is \emph{valid} if $\vec{c} \in S^+$, where
\[S^+ = \{LM^{-1} \vec{x} : LM^{-1}\vec{x} \in \mathbb{Z}^{n}, \vec{x} \in \mathbb{R}^n_{\geq 0}\}. \]

Equivalently, a configuration $\vec{c}$ is valid if $ML^{-1}\vec{c} \in R^+$, where
\[R^+ = \{ \vec{x} \in \mathbb{R}^n_{\geq 0} : LM^{-1}\vec{x} \in \mathbb{Z}^{n} \}.\]
\end{defn}

\begin{rem}
Note that if $M$ is an $n \times n$ invertible $M$-matrix, then one can take $(M,M)$ as a chip-firing pair. In this case $M$ defines the chip-firing rule and the set of valid configurations is given by $S^+ = \mathbb{Z}^n_{\geq 0}$, integer vectors with nonnegative entries.  This recovers the notion of chip-firing on $M$ as studied in \cite{GuzKliMmatrices}.
\end{rem}

Next we collect some other relevant definitions  from \cite{GuzKlivans}.

\begin{defn}
Suppose $(L,M)$ is a chip-firing pair, and suppose that $\vec{c} \in S^+$ is a valid configuration. A site $i \in \{1,\dots, n\}$ is \emph{ready to fire} if
\[\vec{c} - L\vec{e}_i \in S^+,\]
so that the vector obtained by subtracting the $i$th row of $L$ from $\vec{c}$ is also valid.

Similarly, suppose $\vec{x} \in R^+$. Then a site $i \in \{1,\dots, n\}$ is \emph{ready to fire} if 
\[\vec{x} - M\vec{e}_i \in R^+.\]

A configuration $\vec{c}$ (in $S^+$ or $R^+$) is \emph{stable} if no site is ready to fire.
\end{defn}

If $i$ is ready to fire, we say that $\vec{b} = \vec{c} - L \vec{e}_i \in S^+$ is obtained from $\vec{c}$ by a \emph{legal firing}. Iterating this 
process, we say that $\vec{a} \in S^+$ is obtained from $\vec{c}$ via a \emph{sequence of legal firings}. For a configuration $\vec{c} \in S^+$ (resp. $\vec{d} \in R^+$), we let $\text{stab}_{S^+}(\vec{c})$ (resp. $\text{stab}_{R^+}(\vec{d})$) denote the configuration obtained by performing a sequence of legal firings until no site is ready to fire. By adapting the proof of \cite[Theorem 2.2.2]{Klivans} one can see that both $\text{stab}_{S^+}(\vec{c})$ and $\text{stab}_{R^+}(\vec{d})$ are unique.
If the context is clear we simply write $\stab(\vec{x})$ for the stabilization of the configuration $\vec{x}$. 

Analogous to the superstable configurations of usual chip-firing we also need the notion of (multi) set-firing.

\begin{defn}\label{def:superstable}
Suppose $(L,M)$ is a chip-firing pair. A configuration $\vec{c} \in S^+$ is \emph{$z$-superstable} if $$\vec{c}  - L\vec{z} \notin S^+$$ for all nonzero $\vec{z} \in {\mathbb Z}^n_{\geq 0}$.

Similarly, a valid configuration $\vec{x} \in R^+$ is $z$-superstable if $$\vec{x} - M\vec{z} \notin R^+$$ for all nonzero $\vec{z} \in {\mathbb Z}^n_{\geq 0}$.

A configuration $\vec{c} \in S^+$ (resp. $R^+$) is \emph{$\chi$-superstable} if $$\vec{c}  - L\vec{\chi} \notin S^+ \text{(resp. $R^+$)}$$  for all nonzero $\vec{\chi} \in \{0,1\}^n$.
\end{defn}

\begin{defn}
Suppose $(L,M)$ is a chip-firing pair. A configuration $\vec{c} \in S^+$ is said to be \emph{reachable} if there exists a configuration $\vec{d} \in S^+$ satisfying 
\begin{enumerate}
    \item $\vec{d} - L\vec{e}_i \in S^+$ for all $i = 1, \dots, n$, and
    \item $\vec{c} = \vec{d} - \sum_{j=1}^k L \vec{e}_{i_j}$, where $\vec{d} - \sum_{j=1}^\ell L\vec{e}_{i_j} \in S^+$ for all $\ell \leq k$.
\end{enumerate}
Similarly, a configuration $\vec{x} \in R^+$ is reachable if the analogous conditions hold for some $\vec{y}$ in $R^+$, where $M$ replaces the role of $L$.
\end{defn}

Hence $\vec{c}$ is reachable if it can be obtained from a sequence of legal firings starting from a valid configuration $\vec{d}$, where any vertex of $\vec{d}$ can be fired. With these preliminaries we can recall the notion of a critical configuration.

\begin{defn} \label{def:critical}
Suppose $(L,M)$ is a chip-firing pair.  A configuration $\vec{c} \in S^+$ is said to be \emph{critical} if it is stable and \emph{reachable}.
Similarly,  a configuration $\vec{c} \in R^+$ is said to be \emph{critical} if it is stable and reachable.
\end{defn}

As was the case for classical chip-firing, the matrix $L$ defines an equivalence relation on integer vectors.  Namely, for $\vec{c}, \vec{d} \in {\mathbb Z}^n$ we have $\vec{c} \sim_{L} \vec{d}$ if there exists a vector $\vec{f} \in {\mathbb Z}^n$ such that $\vec{d} =  \vec{c} - L{\vec f}$. Similarly if $\vec{x}, \vec{y} \in {\mathbb R}^n$ we have $\vec{x} \sim_{M} \vec{y}$ if there exists $\vec{z} \in {\mathbb Z}^n$ such that $\vec{y} = \vec{x} - M{\vec z}$. As noted in \cite{GuzKlivans}, one can freely move between $S^+$ and $R^+$ when checking for for equivalence and hence other properties. We record these results here.

\begin{lem}\label{lem:GuzKlivansSR}
Suppose $(L, M)$ is a chip-firing pair. Let $\vec{c} = LM^{-1}\vec{x}$, where $\vec{x} \in R^+$ (so that $\vec{c} \in S^+$). Then for any $\vec{d} = LM^{-1}\vec{y}$ with $\vec{y} \in R^+$, we have $\vec{c} \sim_{L} \vec{d}$ if and only if $\vec{x} \sim_{M} \vec{y}$. From this we conclude the following:
\begin{enumerate}
    \item $\vec{c}$ is stable if and only if $\vec{x}$ is stable.
    \item $\vec{c}$ is reachable if and only if $\vec{x}$ is reachable.
    \item $\vec{c}$ is critical if and only if  $\vec{x}$ is critical.
    \item $\vec{c}$ is $z$-superstable if and only if $\vec{x}$ is $z$-superstable.
    
\end{enumerate}

\end{lem}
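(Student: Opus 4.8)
The plan is to exploit the linear bijection $\Phi := LM^{-1}$, which by Definition \ref{defn:valid} maps $R^+$ onto $S^+$: since both $L$ and $M$ are invertible, $\Phi$ is a bijection with inverse $ML^{-1}$, and the very definitions of $S^+$ and $R^+$ guarantee $\Phi(R^+) = S^+$. The single identity driving everything is the \emph{intertwining relation}
\[\Phi(\vec{x} - M\vec{z}) = LM^{-1}\vec{x} - LM^{-1}M\vec{z} = \Phi(\vec{x}) - L\vec{z},\]
valid for every $\vec{z}$, which says that a firing move by $\vec{z}$ in the $M$-world corresponds under $\Phi$ to a firing move by $\vec{z}$ in the $L$-world.

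First I would establish the equivalence of the two relations. If $\vec{x} \sim_M \vec{y}$, write $\vec{y} = \vec{x} - M\vec{z}$ with $\vec{z} \in \Z^n$; applying $\Phi$ and the intertwining relation gives $\vec{d} = \vec{c} - L\vec{z}$, so $\vec{c} \sim_L \vec{d}$. Conversely, if $\vec{c} \sim_L \vec{d}$, write $\vec{d} = \vec{c} - L\vec{f}$ with $\vec{f} \in \Z^n$ and apply $ML^{-1}$ to obtain $\vec{y} = \vec{x} - M\vec{f}$, so $\vec{x} \sim_M \vec{y}$. This proves the biconditional; the only point beyond linear algebra is that $\vec{z}$ and $\vec{f}$ remain integral, which is automatic.

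For the four consequences I would observe that each defining condition is preserved by $\Phi$. For (1), site $i$ is ready to fire at $\vec{c}$ iff $\vec{c} - L\vec{e}_i \in S^+$; by the intertwining relation $\vec{c} - L\vec{e}_i = \Phi(\vec{x} - M\vec{e}_i)$, and since $\Phi$ is a bijection $R^+ \to S^+$ this holds iff $\vec{x} - M\vec{e}_i \in R^+$, i.e.\ iff $i$ is ready to fire at $\vec{x}$; hence $\vec{c}$ is stable iff $\vec{x}$ is. For (2), a reachability witness $\vec{d} \in S^+$ pulls back under $\Phi^{-1}$ to $\vec{y} = \Phi^{-1}(\vec{d}) \in R^+$ from which every site is ready to fire and whose successive partial firings all remain in $R^+$, again by the intertwining identity applied to each $\vec{e}_{i_j}$; the converse is identical, so reachability transfers. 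Part (3) is immediate from (1) and (2) since critical means stable and reachable. For (4), $z$-superstability asks that $\vec{c} - L\vec{z} \notin S^+$ for all nonzero $\vec{z} \in \Z^n_{\geq 0}$; once more $\vec{c} - L\vec{z} = \Phi(\vec{x} - M\vec{z})$, and the bijection yields $\vec{c} - L\vec{z} \in S^+$ iff $\vec{x} - M\vec{z} \in R^+$, so the two conditions coincide.

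The argument is essentially bookkeeping, so I do not expect a serious obstacle; the only points requiring care are confirming that $\Phi$ genuinely restricts to a bijection between $R^+$ and $S^+$ rather than merely a linear map, and that the intermediate configurations in the reachability definition correspond step-by-step under $\Phi$. Both follow directly from Definition \ref{defn:valid} and the intertwining relation, so once these are recorded the proof is complete.
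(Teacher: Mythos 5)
Your argument is correct. Note, however, that the paper does not actually prove Lemma \ref{lem:GuzKlivansSR}: it is imported wholesale from Guzm\'an--Klivans \cite{GuzKlivans} with the remark that ``one can freely move between $S^+$ and $R^+$,'' so there is no in-paper proof to compare against. Your write-up supplies exactly the standard mechanism behind that transfer principle: the map $\Phi = LM^{-1}$ restricts to a bijection $R^+ \to S^+$ directly from Definition \ref{defn:valid}, and the intertwining identity $\Phi(\vec{x} - M\vec{z}) = \Phi(\vec{x}) - L\vec{z}$ converts every $M$-firing statement into the corresponding $L$-firing statement. The one point you rightly flag --- that $\Phi(\vec{w}) \in S^+$ iff $\vec{w} \in R^+$ requires injectivity of $\Phi$, not merely surjectivity onto $S^+$ --- is handled by the invertibility of $L$ and $M$, and the integrality of the firing vectors $\vec{z}$, $\vec{f}$ is preserved automatically. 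So your proof is a correct, self-contained substitute for the citation; it is also essentially the argument one finds in \cite{GuzKlivans} itself, so it does not constitute a genuinely different route so much as a filling-in of an omitted one.
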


\begin{lem}\label{lem:FiringSeq}
Consider chip-firing pair $(L,M)$, and let $\vec{b}$ and $ \vec{c}$ be valid configurations. $\stab_{S^+}(\vec{b}) = \vec{c}$ if and only if $\stab_{R^+}(ML^{-1}\vec{b}) = ML^{-1}\vec{c}$.
\end{lem}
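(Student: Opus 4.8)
The plan is to show that the linear bijection $\vec{c} \mapsto ML^{-1}\vec{c}$ carries the chip-firing dynamics on $S^+$ onto the chip-firing dynamics on $R^+$ step by step, so that the two stabilization processes run in lock-step. The key algebraic observation is that firing a site commutes with this map: for any configuration $\vec{c}$ and any site $i$,
\[ML^{-1}(\vec{c} - L\vec{e}_i) = ML^{-1}\vec{c} - ML^{-1}L\vec{e}_i = ML^{-1}\vec{c} - M\vec{e}_i,\]
so that subtracting the $i$th column of $L$ in the $S^+$ model corresponds exactly to subtracting the $i$th column of $M$ in the $R^+$ model.

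First I would record that $ML^{-1}$ maps $S^+$ bijectively onto $R^+$, with inverse $\vec{x} \mapsto LM^{-1}\vec{x}$: if $\vec{c} = LM^{-1}\vec{x} \in S^+$ with $\vec{x} \in \mathbb{R}^n_{\geq 0}$, then $ML^{-1}\vec{c} = \vec{x} \in R^+$, and conversely, in accordance with Lemma \ref{lem:GuzKlivansSR}. Next, combining the displayed identity with this bijectivity, I would argue that site $i$ is ready to fire at $\vec{c} \in S^+$ if and only if it is ready to fire at $ML^{-1}\vec{c} \in R^+$: indeed $\vec{c} - L\vec{e}_i \in S^+$ holds precisely when its image $ML^{-1}\vec{c} - M\vec{e}_i$ lies in $R^+$. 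In particular $\vec{c}$ is stable in $S^+$ if and only if $ML^{-1}\vec{c}$ is stable in $R^+$.

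It then follows that the two firing processes are conjugate under $ML^{-1}$: at every stage the sets of sites ready to fire coincide, and firing any chosen site sends corresponding configurations to corresponding configurations. Hence any legal firing sequence from $\vec{b}$ in $S^+$ maps, term by term, to a legal firing sequence from $ML^{-1}\vec{b}$ in $R^+$, and such a sequence terminates at a stable configuration in one model exactly when its image terminates in the other. Appealing to the uniqueness of stabilization noted in the text (the $S^+$ and $R^+$ analogues of \cite[Theorem 2.2.2]{Klivans}), I would conclude that $\stab_{S^+}(\vec{b}) = \vec{c}$ if and only if $\stab_{R^+}(ML^{-1}\vec{b}) = ML^{-1}\vec{c}$.

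The computations here are routine; the only point requiring care is the lock-step claim, namely that one obtains a correspondence of the entire dynamics and not merely of terminal states, so that termination itself is preserved. This is precisely where the uniqueness of stabilization is needed: a priori different firing orders might behave differently, but uniqueness guarantees a well-defined common terminal configuration in each model, and the conjugacy above then forces these terminal configurations to match under $ML^{-1}$.
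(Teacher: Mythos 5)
Your proposal is correct and follows essentially the same route as the paper: the paper's proof is a one-line observation that a legal firing $\vec{b} \mapsto \vec{b} - L\vec{e}_v$ in $S^+$ corresponds exactly to the legal firing $ML^{-1}\vec{b} \mapsto ML^{-1}\vec{b} - M\vec{e}_v$ in $R^+$, which is precisely your key identity. You simply make explicit the supporting details (bijectivity of $ML^{-1}$ between $S^+$ and $R^+$, and the appeal to uniqueness of stabilization) that the paper leaves implicit.
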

\begin{proof}
This follows from the fact that $\vec{c}$ is obtained by a legal firing sequence from $\vec{b}$ on $G_\phi$ (there exists some $v$ such that $\vec{b} - L\vec{e_v} = \vec{c}$) if and only if $ML^{-1}\vec{c}$ is obtained by a legal firing from $ML^{-1}\vec{b}$ on $|G|$ (there exists some $v$ such that $ML^{-1}\vec{b} - M \vec{e_v} = ML^{-1} \vec{c}$).
\end{proof}

 Recall that the equivalence classes $[\vec{c}]_L$ defined by $\sim_{L}$ form the elements of the critical group of the chip firing pair to be ${\mathbb Z}^n/\im L$. A main result from \cite{GuzKlivans} is the following.

\begin{thm}\cite[Theorems 3.5, 4.3, 5.5]{GuzKlivans}\label{thm:class}
Suppose $(L,M)$ is a chip-firing pair.  Then there exists exactly one z-superstable configuration and one critical configuration in each equivalence class $[\vec{c}]_L$.
\end{thm}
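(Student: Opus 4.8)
The plan is to reduce the statement to ordinary chip-firing governed by the $M$-matrix $M$, where the defining properties of $M$-matrices can be brought to bear, and then to run the standard existence and uniqueness arguments for critical and superstable configurations.

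\emph{Reduction.} By Lemma~\ref{lem:GuzKlivansSR} the linear isomorphism $\vec{x}\mapsto LM^{-1}\vec{x}$ restricts to a bijection $R^+\to S^+$ that carries $\sim_M$-classes to $\sim_L$-classes and preserves the properties of being stable, reachable, critical, and $z$-superstable. It therefore suffices to prove that each $\sim_M$-class meeting $R^+$ contains exactly one critical and exactly one $z$-superstable configuration. The payoff is that on the $R^+$ side firing is dictated by the $M$-matrix $M$, for which we may use avalanche finiteness (condition~(1)) and the entrywise nonnegativity $M^{-1}\ge\vec{0}$ (condition~(3)). Note also that $M\mathbb{Z}^n$ lies inside the lattice $ML^{-1}\mathbb{Z}^n$ underlying $R^+$, since $L$ is an integer matrix, so firing keeps us inside both $R^+$ and a fixed $\sim_M$-class.

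\emph{Existence.} For a critical configuration I would start from an arbitrary representative $\vec{x}_0\in R^+$ of the class and add $M\vec{z}$ for a large $\vec{z}\in\mathbb{Z}^n_{\ge 0}$; this preserves the class and produces a configuration from which every site is ready to fire. Stabilizing it terminates by avalanche finiteness, and the result is stable by construction and reachable by definition, hence critical. For a $z$-superstable configuration I would instead minimize a suitable (strictly convex) energy functional over the discrete, bounded-below set $[\vec{x}_0]_M\cap R^+$; a minimizer exists, and minimality is exactly the assertion that no firing $\vec{x}-M\vec{z}$ with $\vec{0}\ne\vec{z}\ge\vec{0}$ remains valid, i.e. that the configuration is $z$-superstable.

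\emph{Uniqueness (the crux).} The main obstacle is uniqueness, and the engine is the abelian (confluence) property of $M$-matrix chip-firing: if two sites are simultaneously ready to fire, firing them in either order yields the same configuration, so $\stab_{R^+}$ is independent of the firing order. For critical configurations I would upgrade this into a group structure, defining an addition by stabilizing sums of critical representatives and checking that the critical configurations form a group isomorphic to the critical group; since existence already gives at least one critical configuration per class and there are exactly $|\mathcal{K}(G_\phi)|$ classes, a pigeonhole argument forces exactly one. For $z$-superstability, suppose $\vec{x}\sim_M\vec{y}$ are both $z$-superstable with $\vec{y}=\vec{x}-M\vec{z}$ and $\vec{z}\ne\vec{0}$; splitting $\vec{z}=\vec{z}^+-\vec{z}^-$ into positive and negative parts, one of the multiset firings $\vec{x}-M\vec{z}^+$ or $\vec{y}-M\vec{z}^-$ is nontrivial and, using $M^{-1}\ge\vec{0}$ together with the lattice structure of $R^+$, lands back in $R^+$, contradicting the $z$-superstability of $\vec{x}$ or of $\vec{y}$. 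Verifying that this firing indeed returns to $R^+$ is the delicate step, and is precisely the reason the correct notion here is $z$-superstability (multiset firings) rather than $\chi$-superstability (set firings).
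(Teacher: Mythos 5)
First, note that the paper does not prove this statement at all: it is imported verbatim from Guzm\'an and Klivans \cite{GuzKlivans}, so there is no in-paper argument to compare against and your proposal has to stand on its own. Your reduction to the $R^+$ side via Lemma \ref{lem:GuzKlivansSR} and your existence arguments are essentially the right skeleton, though for superstables the functional to minimize should be the linear one $\vec{1}^{T}M^{-1}\vec{x}$, which drops by $\vec{1}^{T}\vec{z}\geq 1$ under any legal multiset firing and is bounded below by $0$ precisely because $M^{-1}\geq \vec{0}$; strict convexity is neither needed nor obviously compatible with the claim that minimality equals superstability.

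The two uniqueness arguments are where the genuine gaps lie. For critical configurations your pigeonhole is circular: to run it you need the total number of critical configurations to equal the number of classes, and the injectivity of the map from critical configurations to classes (equivalently, triviality of the kernel of your putative group surjection onto $\mathcal{K}$) \emph{is} the uniqueness you are trying to prove. The standard route is direct: show that if $\vec{c}$ is critical and $\vec{b}\sim_M\vec{c}$ with $\vec{b}\geq\vec{c}$ componentwise, then $\stab(\vec{b})=\vec{c}$, and then feed one common large representative $\vec{b}$ to two equivalent critical configurations. For $z$-superstables you correctly isolate the delicate step, namely that $\vec{w}:=\vec{x}-M\vec{z}^{+}=\vec{y}-M\vec{z}^{-}$ lies in $R^+$, but you neither carry it out nor invoke the right property: what makes it work is not $M^{-1}\geq\vec{0}$ but the sign pattern of $M$ itself together with the disjointness of the supports of $\vec{z}^{+}$ and $\vec{z}^{-}$. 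Coordinatewise, if $(\vec{z}^{-})_i=0$ then $(M\vec{z}^{-})_i\leq 0$ because the off-diagonal entries of $M$ are nonpositive, so $\vec{w}_i\geq\vec{y}_i\geq 0$; if $(\vec{z}^{-})_i>0$ then $(\vec{z}^{+})_i=0$ and symmetrically $\vec{w}_i\geq\vec{x}_i\geq 0$. The lattice condition is automatic since $LM^{-1}(M\vec{z}^{\pm})=L\vec{z}^{\pm}\in\mathbb{Z}^{n}$. With these repairs your outline does reproduce the Guzm\'an--Klivans proof.
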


\subsection{Our setup}
We now fix a signed graph $G_\phi$ with sink vertex $q$ and nonsink vertices $v_1, \dots, v_n$, and let $G = |G|$ denote the underlying graph. Recall that the reduced Laplacian $L = L_{G_\phi}$ naturally defines a chip-firing rule, as well as a critical group ${\mathcal K}(G_\phi) := {\mathbb Z}^n/\im L$.  Since the reduced Laplacian $L_G$  is an $M$-matrix we can employ the Guzm\'an-Klivans theory.

\begin{defn}
Suppose $G_\phi$ is a signed graph with sink vertex $q$ and underlying graph $G$. Then $(L_{G_\phi}, L_G)$ forms a \emph{chip-firing pair} for $G_\phi$.
\end{defn}

From the Guzm\'an-Klivans theory we obtain the notion of \emph{critical} and \emph{$z$-superstable} configurations of a signed graph.  Each equivalence class $[\vec{c}]_L$ contains a unique valid critical and a unique valid $z$-superstable.

Furthermore, we can recover the critical group $\mathcal{K}(G)$ in terms of the critical configurations of $G$. In particular one can check that the stablization of the sum of two critical configurations is again critical, so that if $\vec{v}_1, \vec{v}_2$ are critical configuration we have 
$$[\vec{v}_1] \oplus [\vec{v}_2] = [\text{stab}(\vec{v}_1+\vec{v}_2)],$$
provides a critical representative of the product.

\subsection{Relation to other critical groups}
\label{sec: other}
There is another way to define a critical group of a signed graph, for instance studied by Reiner and Tseng in \cite{Reiner} based on work of Bacher et al \cite{Bacher}. We discuss the main ideas here and refer to \cite{Bacher} and \cite{Reiner} for details.  The definition is inspired by the following construction of a critical group for any $r$-dimensional rational subspace $\Lambda^{\mathbb R} \subset {\mathbb R}^m$.  Let $\Lambda = \Lambda^{\mathbb R} \cap {\mathbb Z}^m$ denote the underlying lattice and let $\Lambda^\#$ denote its \emph{dual lattice}
\[\Lambda^\# := \{ x \in \Lambda^{\mathbb R}: \langle x, \lambda \rangle \in {\mathbb Z} \text{ for all $\lambda \in \Lambda$}\}.\]

The quotient $\Lambda^\#/\Lambda$ is called the \emph{determinant group} of $\Lambda$. Given any orthogonal decomposition of Euclidean space ${\mathbb R}^m = B^{\mathbb R} \oplus Z^{\mathbb R}$ we obtain two determinant groups which turn out be isomorphic to the group
\[\mathcal{C}_{BZ} := {\mathbb Z}^m/(B \oplus Z).\]

To compute ${\mathcal C}_{BZ}$ we can use either an integral basis $\{b_1, \dots, b_r\}$ of $B$ or an integral basis $\{z_1, \dots, z_s\}$ of $Z$. For this let $M_B$ and $M_Z$ be the matrices with columns given by those vectors. In what follows we use $G(A):= A^TA$ to denote the \emph{Gram matrix} of a matrix $A$.  We then get 
\[\mathcal{C}_{BZ} \cong \text{coker}(G(M_Z)) \cong \text{coker}(G(M_B)).\]

The canonical example of this construction comes from the decomposition of ${\mathbb R}^E$ given by the incidence matrix $\delta$ of a graph $G = (V,E)$.  The row space of $\delta$ is called the \emph{cut space}, and the null space (which is always orthogonal to the row space) is called the \emph{cycle space}.  Since the integral cut space of any connected graph has an integral basis given by the vertex cuts of any subset of $n-1$ vertices, this construction recovers the usual critical group ${\mathcal K}(G)$. In particular the relevant Gram matrix is simply the reduced Laplacian $L_G$ (for some choice of sink vertex).  One can also take the integral \emph{cycle} space of $G$ to recover ${\mathcal K}(G)$, as was studied in \cite{DMSR}.

In \cite{Reiner} the authors use this approach to define a critical group ${\mathcal C}(G_\phi)$ of a signed graph $G_\phi$.  We let $\delta$ denote the signed incidence matrix of $G_\phi$ and define
\[{\mathcal C}(G_\phi) = \im \delta/\im \delta \delta^T = \im \delta/\im \tilde{L},\]
\noindent
where $\tilde{L} = \tilde{L}_{G_\phi}$ is the signed (but non-reduced) Laplacian of $G_\phi$.

If $G_\phi$ is connected and has all positive edges, one can check that $\delta$ has Smith normal with diagonal entries $(1,1, \dots, 1,0)$, and similarly the Smith normal form of $\tilde{L}$ has a $0$ in the bottom right.  Hence one can compute ${\mathcal C}(G_\phi)$ by reading off the nonzero entries in the Smith normal form of $\tilde{L}$.  The critical group ${\mathcal C}(G_\phi)$ has pleasing functorial properties and also connects to the \emph{double cover} of graphs associated to $G_\phi$. We refer to \cite{Reiner} for details.

The critical group ${\mathcal K}(G_\phi)$ that we study in this paper more closely connects to the notion of `adding critical configurations', and also has the property that its order $|{\mathcal K}(G)|$ is given by $\det L_{G_\phi}$.  To see how the two constructions differ, consider the signed graph $G_\phi$ given by a $3$-cycle with a single negative edge between the nonsink vertices. In this case the reduced signed Laplacian $ L = L_{G_\phi}$ is given by
\[L = \begin{pmatrix} 2 & 1 \\ 1 & 2 \end{pmatrix}.\]

Note that $\det(L) = 3$ so we expect $|{\mathcal K}(G_\phi)| = 3$. Indeed one can check (see Section \ref{sec:cycle}) that $\mathcal{K}(G_\phi) \cong {\mathbb Z}/3{\mathbb Z}$.  On the other hand, a simple computation shows that $\mathcal{C}(G_\phi) \cong {\mathbb Z}/2 {\mathbb Z}$ .

\section{Proofs}\label{sec:generalizedResults}

We next turn to proofs of our main results. For our first collection of statements, we consider a general integral chip-firing pair $(L,M)$ not necessarily arising from a signed graph. In what follows, for a vector $\vec{v} \in {\mathbb R}^n$, we let $\lfloor \vec{v} \rfloor$ denote the vector obtained by taking the floor of each entry, so that $(\lfloor \vec{v} \rfloor)_i = \lfloor (\vec{v})_i \rfloor$.

\begin{prop}\label{prop:keyFund}
Suppose $(L,M)$ is an integral chip-firing pair, where $L$ is an invertible matrix and $M$ is an $M$-matrix. Suppose $\vec{c} \in S^+$ is a valid configuration, and let $\vec{b} \in \Z^n_{\geq 0}$ . Then $\vec{c} - L\vec{b} \in S^{+}$ if and only if $\lfloor ML^{-1} \vec{c} \rfloor - M \vec{b} \in \R^n_{\geq 0}$. 
\end{prop}
\begin{proof}
First assume that $\lfloor ML^{-1} \vec{c} \rfloor - M \vec{b} \in \R^n_{\geq 0}$. It then follows that $ML^{-1}\vec{c} - M \vec{b}$ has nonnegative entries.
Applying $LM^{-1}$ to this vector results in $\vec{c} - L \vec{b}$, which has integral entries since we are assuming that $L$ is integral. We conclude that $\vec{c} - L \vec{b} \in S^+$, as desired.

On the other hand, suppose $\vec{c} - L\vec{b} \in S^+$. Then we have that $ML^{-1}\vec{c} -M\vec{b} \in R^+$ by definition of $R^+.$ Therefore $\lfloor ML^{-1}\vec{c} \rfloor -M\vec{b}  = \lfloor ML^{-1}\vec{c} -M\vec{b}  \rfloor \in \mathbb{R}^n_{\geq 0}$. This completes the proof.
\end{proof}

\begin{thm}\label{thm:KeyIdea}
Suppose $(L,M)$ is a integral chip-firing pair, where $L$ is an invertible matrix and $M$ is an $M$-matrix. Suppose $\vec{c} \in S^+$ is a valid configuration. Then $\vec{c}$ is $z$-superstable if and only if $\lfloor ML^{-1} \vec{c} \rfloor$ is $z$-superstable for $M$. Similarly $\vec{c}$ is critical if and only if $\lfloor ML^{-1} \vec{c} \rfloor$ is critical for $M$.
\end{thm}
\begin{proof}
Recall that $\vec{c}$ is $z$-superstable if and only if there is no multiset firing that gives rise to a valid configuration, which by Proposition \ref{prop:keyFund} is equivalent to the configuration $\lfloor ML^{-1}\vec{c} \rfloor$ having no multiset firing that gives rise to a nonnegative configuration. This proves the first statement.

We proceed to the second statement of the theorem. 
Proposition~\ref{prop:keyFund} tells us that a sequence of site firings starting with $\vec{c}$ is a legal firing sequence (as a sequence of configurations in $S^+$ determined by the chip-firing pair $(L,M)$) if and only if it is a legal firing sequences beginning with $\lfloor ML^{-1}\vec{c} \rfloor$ (as a sequence of configurations in ${\mathbb Z}^n_{\geq 0}$ for $M$). Hence $\vec{c}$ is reachable for the pair $(L,M)$ if and only if $\lfloor ML^{-1}\vec{c} \rfloor$ is reachable for $M$, and $\vec{c}$ is stable for $(L,M)$ if and only if $\lfloor ML^{-1}\vec{c} \rfloor$ is stable for $M$. This proves the claim about criticality.
\end{proof}

\subsection{Special configurations on signed graphs}
Recall that if $G_\phi$ is a signed graph with sink $q$ and nonsink vertices $\{v_1, \dots, v_n\}$, we define the chip-firing pair $(L,M)$ where $L=L_{G_{\phi}}$ and $M=L_{G}$. As discussed above, this defines the notion of a valid configuration, as well as $z$-superstable and critical configurations. We first prove Proposition \ref{prop:positive}, which provides a restriction on the set of valid configurations. This result also appears in \cite{Ram}, we provide a proof for completeness.

\begin{prop}\cite[Proposition 1]{Ram}\label{prop:positive}
    For any signed graph $G_\phi$ with chosen sink $q$, we have $S^+ \subset {\mathbb Z}^n_{\geq 0}$, so that the set of valid configurations determined by the chip-firing pair $(L,M)$ is contained in the nonnegative orthant.
\end{prop}

\begin{proof}
Recall that the columns of $LM^{-1}$ define the cone of the valid configurations. 
    Since $L$ is obtained from $M$ by changing some non-diagonal $-1$ entries to $+1$, we can write $L=M+N,$ where $$(N)_{i,j}=\begin{cases} 2, & \text{ if }L_{i,j}=+1 \\ 0, & \text{ otherwise. }\end{cases}.$$ Hence $LM^{-1}=(M+N)M^{-1}=MM^{-1}+NM^{-1}.$ Since $M$ is an $M$-matrix, we have that $M^{-1}$ has nonnegative entries.  The same is true for $MM^{-1} = I$ as well as for $N$, so that $LM^{-1}$ has all nonnegative entries and the claim follows.
\end{proof}

We also have the following important observation.

\begin{cor}\label{cor:KeyIdea2}
Let $G_\phi$ be a signed graph with sink $q$ and underlying graph $G$. Suppose $\vec{c} \in S^+$ is a valid configuration. Then $\vec{c}$ is $z$-superstable if and only if $\lfloor ML^{-1} \vec{c} \rfloor$ is superstable for $G$. Similarly $\vec{c}$ is critical if and only if $\lfloor ML^{-1} \vec{c} \rfloor$ is critical for $G$. 
\end{cor}

\begin{proof}
   Suppose $\vec{c}$ is a valid configuration for the signed graph $G_\phi$. For one direction, if $\vec{c}$ is $z$-superstable then from Theorem \ref{thm:KeyIdea} we know that $\lfloor \vec{c} \rfloor$ is $z$-superstable as a configuration for $G$, and hence superstable. For the other direction, if  $\lfloor \vec{c} \rfloor$ is a superstable configuration for the graph $G$ then from \cite[Theorem 4.4]{GuzKlivans} we know that  $\lfloor \vec{c} \rfloor$ is $z$-superstable, since these notions coincide for graphs. Hence from Theorem \ref{thm:KeyIdea} we conclude that $\vec{c}$ is $z$-superstable. The statement for critical configurations directly follows from Theorem \ref{thm:KeyIdea}.
   \end{proof}

From this result, one can conclude that the notion of $z$-superstability and $\chi$-superstablity also coincide for signed graphs. In particular when checking for $z$-superstability, one can restrict to set (as opposed to multiset) firings. Here we include a self-contained proof, in part to clarify some notational ambiguity in \cite{GuzKlivans}.

\begin{thm}\label{thm:zchigeneral}
Suppose $G_{\phi}$ is a signed graph with sink $q$ and chip-firing pair $(L,M)$.  Then a valid configuration $\vec{c} \in S^+$ is $z$-superstable if and only if $ML^{-1}\vec{c} - M\vec{z} \not\geq \vec{0}$ for any nonzero vector $\vec{z}$ in $\{0, 1\}^{n}$. 
\end{thm}

\begin{proof}
Here we follow the strategy employed in the proof of \cite[Theorem 4.4]{GuzKliMmatrices}.  Note, however, that there is an error in the notation used in \cite{GuzKliMmatrices} which we seek to clarify here.  From the definition, we know that if $\vec{c}$ is a $z$-superstable configuration then it is also $\chi$-superstable, and hence we only need to prove the converse.

For this suppose $\vec{c} \in S^+$ is $\chi$-superstable, so that $ML^{-1}\vec{c} - M\vec{\chi} \not\geq \vec{0}$ for any vector 0-1 vector $\vec{\chi}$.  To establish the result it suffices to show that $ML^{-1}\vec{c} - M\vec{z} \not\geq \vec{0}$ for any nonzero, nonnegative integer vector $\vec{z}$. To that end, fix a nonzero vector $\vec{z} \in \mathbb{N}^n$. For brevity let $\vec{x} = ML^{-1}\vec{c}$.

Now we construct a 0-1 vector $\vec{\chi}$ as follows. Let $\max (\vec{z})$ denote the maximum value of the entries of $z$, and for each $j$ such that $(\vec{z})_j = \max (\vec{z})$, we set $(\vec{\chi})_j = 1$ and $(\vec{\chi})_j = 0$ otherwise. By $\chi$-superstability of $\vec{x} \in R^+$, there exists $j$ such that $(\vec{x} - M\vec{\chi})_j < 0$. From $(M\vec{\chi})_j > 0$ and the definition of $M$, $(\vec{\chi})_j = 1$.  Note that this construction of $\vec{\chi}$ makes it so that $(\vec{z} - \vec{\chi})\geq \vec{0}$ and $(\vec{z} - \vec{\chi})_j = \max (\vec{z} - \vec{\chi})$ for any $j$ such $(\vec{\chi})_j = 1$.

For $j$ such that $(\vec{x} - M\vec{\chi})_j<0$, we claim that  $(\vec{x} - M\vec{z})_j \leq (\vec{x} - M\vec{\chi})_j < 0$. To see this it suffices to show $(M(\vec{z} - \vec{\chi}))_j \geq 0$. By applying the fact that $(\vec{\chi})_j = 1$ and by the non-negative row-sum of $M$, we find that \begin{align*}(M(\vec{z} - \vec{\chi}))_j &= \mbox{deg}(v_j)\cdot (\vec{z} - \vec{\chi})_j - \sum_{k: \mbox{ $\{v_k, v_j\}$ is an edge}}(\vec{z} - \vec{\chi})_k\\&= \mbox{deg}(v_j)\cdot \max (\vec{z} - \vec{\chi}) - \sum_{k: \mbox{ $\{v_k, v_j\}$ is an edge}}(\vec{z} - \vec{\chi})_k\\& \geq (\max (\vec{z} - \vec{\chi})) \cdot \sum_{i} M_{j,i} \\& \geq 0\end{align*}
This proves the claim. We conclude that $\vec{x} \in R^+$, and hence $\vec{c} = LM^{-1}\vec{x} \in S^+$, is $z$-superstable.
\end{proof}

\begin{rem}
Note that the result of Theorem \ref{thm:zchigeneral} relies only on the fact that the matrix $M = L_G$ has nonnegative row sums, as was the case in \cite[Theorem 4.4]{GuzKliMmatrices}.
\end{rem}

\subsection{Checking for and constructing special configurations}\label{sec:check}

Suppose $G_\phi$ is a signed graph and $\vec{c} \in {\mathbb Z}^n$ is any integer vector. From the definitions, it is not clear how one could determine if $\vec{c}$ was $z$-superstable, since a priori one would have to determine all possible multiset firings. However, our results provide an efficient way to check this condition. 

First, to determine if $\vec{c}$ valid, we compute $ML^{-1}\vec{c}$ and check if all entries are nonnegative.
Having verified that it is valid, now to check for $z$-superstability, we check whether $\lfloor ML^{-1}\vec{c} \rfloor$ is a superstable configuration of $G$: we can do this via running Dhar's \emph{burning algorithm} \cite{PhysRevLett.64.1613}, which provides a polynomial time (in the number of vertices) way to check this condition.

Note that a similar approach allows us to check if a given vector $\vec{c} \in S^+$ is critical for $G_\phi$. For this we use the fact that the set of critical configurations of the unsigned graph $G$ can be obtained from the set of superstable configurations for $G$.

We can also use our results to \emph{produce} the set of all $z$-superstable (and critical) configurations for a given signed graph $G_\phi$. This improves the somewhat ad hoc algorithm for generating these configurations as discussed in \cite{Ram}.

\begin{prop}\label{prop:algorithm}
For any signed graph $G_\phi$, there exists an algorithm to construct the set of $z$-superstable configurations and the set of critical configurations.
\end{prop}

\begin{proof}
Given a signed graph $G_\phi$, we first produce the set of ${\mathcal S}(G)$ superstable configurations for the underlying graph $G$.
One method for this is to first construct the $G$-parking function ideal $M_G$ (see \cite{PostnikovShapiro03} for details), the exponent vectors of the standard monomials of $M_G$ produce the superstable configurations. One then generates the set ${\mathcal C}(G)$ of critical configurations of $G$ by taking the set of dual configurations:
\[{\mathcal C}(G) = \{\vec{c}_{\max} - \vec{c}: \vec{c} \in {\mathcal S}(G)\}.\]
Recall that $\vec{c}_{\max}$ is the `maximal superstable' configuration for $G$ defined by $(\vec{c}_{\max})_i = \deg(v_i) -1$.

Now, for each $\vec{c} \in {\mathcal S}(G)$ we consider all vectors in the semiclosed hypercube $[c_1, c_1 + 1) \times [c_2, c_2 + 1) \times ... \times [c_n, c_n + 1)$ that lie in the lattice determined by $\frac{1}{\det L}{\mathbb Z}^n$. For each such vector $\vec{x}$, we compute $LM^{-1}\vec{x}$ to determine if it lies in $S^+$ (i.e. has integer coordinates). The configuration $\vec{x}$  is superstable if and only if $LM^{1}\vec{x}$ has integer coordinates. 

Note that vectors in $R^+$ that we must consider are of the form $ML^{-1}\vec{x}$, where $\vec{x}$ has integer entries. Recall that $L^{-1}$ can be computed as $L^{-1} = \frac{1}{\det L} C$, where $C$ has integer entries. Hence vectors in the lattice $\frac{1}{\det L}{\mathbb Z}^n$ will be sufficient to produce the desired configurations.

For the critical configurations, we consider a similar strategy for every vector $\vec{d} \in {\mathcal C}(G)$.
\end{proof}

\begin{ex}
To illustrate our algorithm, we let $G_\phi$ be the signed graph given by the 3-cycle with a single negative edge not incident to the sink. Here the relevant matrices are 
\[ L = \begin{pmatrix} 2 & 1 \\ 1 & 2 \end{pmatrix}, \quad \quad M = \begin{pmatrix} 2 & -1 \\ -1 & 2 \end{pmatrix}, \quad \quad LM^{-1} = \begin{pmatrix} \frac{5}{3} & \frac{4}{3} \\ \frac{4}{3} & \frac{5}{3} \end{pmatrix}.\]

The $G$-parking function ideal for this case is given by $\langle x_1^2, x_2^2, x_1x_2\rangle$, and hence the superstable configurations for $G$ are ${\mathcal S}(G) = \left\{\ctwovector{0}{0}, \ctwovector{1}{0}, \ctwovector{0}{1} \right\}$. Here we have $\vec{c}_{\max} = \ctwovector{1}{1}$ and hence the critical configurations for $G$ are ${\mathcal C}(G) = \left\{ \ctwovector{1}{1}, \ctwovector{0}{1}, \ctwovector{1}{0}\right\}$. 

Next we construct the $z$-superstable congfiguration for $G_\phi$. Since $\det L = 3$ we consider all vectors of the form $\ctwovector{\frac{i}{3}}{\frac{j}{3}}$, $\ctwovector{1 + \frac{i}{3}}{\frac{j}{3}}$, $\ctwovector{\frac{i}{3}}{1 + \frac{j}{3}}$, where $i, j = 0,1,2$. For each of these vectors we multiply on the left by $LM^{-1}$ and check integrality.
For example $LM^{-1}\ctwovector{0}{\frac{1}{3}} = \ctwovector{\frac{4}{9}}{\frac{5}{9}}$, but $LM^{-1}\ctwovector{\frac{1}{3}}{\frac{1}{3}} = 
\ctwovector{1}{1}$, so that $\ctwovector{1}{1}$ is a $z$-superstable configuration for $G_\phi$.
The story for critical configurations is similar.
\end{ex}

    In the context of chip-firing on a graph $G$, the set of superstable configurations is closed under coordinate-wise $\leq$. In other words, if $\vec{c}$ is a superstable configuration and $\vec{d}$ is a valid configuration with $0 \leq d_i \leq c_i$ for all $i$, then $\vec{d}$ is also superstable. One can ask if a similar ``closure'' property holds for the superstable configurations of a signed graph $G_\phi$.  Unfortunately this property fails, as we illustrate with the following example.
    
\begin{ex}
We consider $G_\phi = -K_4$, the signed graph with underlying graph $K_4$, and with all edges not incident to the sink given a negative sign. Here the relevant matrices are

\[ L = \begin{pmatrix} 3 & 1 & 1 \\ 1 & 3 & 1 \\ 1 & 1 & 3 \end{pmatrix}, \quad \quad M = \begin{pmatrix} 3 & -1 & -1 \\ -1 & 3 & -1 \\ -1 & -1 & 3 \end{pmatrix}, \quad \quad LM^{-1} = \begin{pmatrix} 2 & \frac{3}{2} & \frac{3}{2} \\ \frac{3}{2} & 2 & \frac{3}{2} \\ \frac{3}{2} & \frac{3}{2}  & 2 \end{pmatrix}.\]

First consider the vector $\vec{c} = \ccvector{5, 6, 5}$. Note that $ML^{-1}\vec{c} = \ccvector{\frac{2}{5}, \frac{12}{5}, \frac{2}{5}}$, which implies that $\vec{c} \in S^+$. Also we see that $\lfloor ML^{-1}\vec{c} \rfloor = \ccvector{0,2,0}$ which is a superstable configuration for $K_4$. We conclude that $\vec{c}$ is $z$-superstable.

Next consider the configuration $\vec{d} = \ccvector{5, 5, 5}$. We have that $ML^{-1}\vec{d} = \ccvector{1, 1, 1}$, so that $\vec{d} \in S^+$.
However $\vec{d} - L\ccvector{1, 1, 1} = \ccvector{0, 0, 0}$, so that $d$ is not z-superstable as one can set fire all three nonsink vertices.

We have seen that $\vec{d}$ and $\vec{c}$ are both vectors in $S^+$ with the property that $0 \leq d_i \leq c_i$. At the same time, $\vec{c}$ is $z$-superstable whereas $\vec{d}$ is not.
\end{ex}

\subsection{Criticality and Firing the sink}\label{sec:SinkFire}
We next turn to the proof of Proposition \ref{prop:EOfSinkFiring0}, which provides yet another way to check for criticality of a configuration. 
In what follows, recall for a signed graph $G_\phi$ (with underlying graph $G$ and sink vertex $q$) we define the vector $\vec{s}$ as 
$$(\vec{s})_i := \begin{cases}1, & \text{ if }v_i \in G\text{ is a nonsink vertex adjacent to }q \\ 0,  & \text{ otherwise.}
\end{cases}$$ 

\begin{lem}\label{lem:sink}
For any signed graph $G_\phi$, we have that $LM^{-1}\vec{s} \in S^+$, so that $\vec{s} \in R^+$.
\end{lem}
\begin{proof}
By definition $\vec{s}$ has nonnegative entries, so we just need to check that $LM^{-1}\vec{s}$ has integer entries. But this follows from
\[LM^{-1} \vec{s} = LM^{-1} \left(\sum_{i: \; \{v_i, q\} \in E} M \vec{e}_i \right) = L \left(\sum_{i: \; \{v_i, q\} \in E} \vec{e}_i\right),\]
\noindent
which has integer entries since $L$ is integral.
\end{proof}

Note that the entries of $LM^{-1}\vec{s}$ also have a simple combinatorial interpretation, namely the $i$th coordinate is given by
\[(LM^{-1}\vec{s})_i =\begin{cases} 2 \;|\{\mbox{negative edges incident to $v_i$}\}|,& \mbox{if $\{v_i, q\} \not\in E$} \\ 2 \;|\{\mbox{negative edges incident to $v_i$}\}|+1,& \mbox{if $\{v_i, q\} \in E$.}\end{cases}\]

With this we can prove the following.

\begin{prop}\label{prop:EOfSinkFiring0}
   Suppose $G_\phi$ is a signed graph with chip-firing pair $(L,M)$. Then a configuration 
$\vec{c} \in S^+$ is critical if and only if $\mbox{stab}_{S^+}(\vec{c}+LM^{-1}\vec{s}) = \vec{c}$.
\end{prop}
\begin{proof}
We simply use a chain of if-and-only-ifs.
We first note that by Lemma \ref{lem:sink} we have that $\vec{c} + LM^{-1}\vec{s}$ is in $S^+$, and hence a valid configuration.

By Corollary \ref{cor:KeyIdea2}, we know that $\vec{c} \in S^+$ is critical for $G_{\phi}$ if and only if $\lfloor ML^{-1}\vec{c}\rfloor$ is a critical configuration for the underlying unsigned graph $G$. By \cite[Theorem 2.6.3]{Klivans}, $\lfloor ML^{-1}\vec{c}\rfloor$ is critical for $G$ if and only if the stabilization of $\lfloor ML^{-1}\vec{c}\rfloor+\vec{s}$ equals $\lfloor ML^{-1}\vec{c}\rfloor.$ Then, note that stabilization of $\lfloor ML^{-1}\vec{c}\rfloor+\vec{s}$ equals $\lfloor ML^{-1}\vec{c}\rfloor$ if and only if $\mbox{stab}_{R^+}( ML^{-1}\vec{c}+\vec{s})= ML^{-1}\vec{c}$ . Finally note that by Lemma \ref{lem:FiringSeq}, $\mbox{stab}_{R^+}( ML^{-1}\vec{c}+\vec{s})= ML^{-1}\vec{c}$ if and only if $\mbox{stab}_{S^+}( \vec{c}+LM^{-1}\vec{s})=\vec{c}$.
\end{proof}

We can also think about firing the sink $q$ in $G_\phi$, i.e. we consider the configuration $\vec{s}$ instead of $LM^{-1}\vec{s}$. We first point out that $\vec{s}$ is not necessarily valid for a general signed graph $G_\phi$. For instance, in the Example from Section \ref{sec:example} one can check that $\vec{s} = \cvector{1}{0}{1}$ and $ML^{-1}\vec{s} = \cvector{1}{-1}{1}$, so that $\vec{s} \notin S^+$. 

However, for certain graphs we can immediately see that $LM^{-1}\vec{s}$ is valid. In what follows for a signed graph $G_\phi$, we say that the sink $q$ is \emph{universal} if it is adjacent to all other vertices in $G$. A signed graph $G_\phi$ is \emph{signed-regular} if $(m_+, m_-)$ is the same for every vertex, where $m_+$ (resp. $m_-$) is the number of positive (resp. negative) edges incident to $v$.  We then have the following observation.

\begin{cor}\label{cor:SpecialSinkFire}
Let $G_{\phi}$ be a signed graph with universal sink $q$. Suppose $G_\phi$ is signed-regular with $m_-$ negative edges incident to each nonsink vertex. Then a valid configuration $\vec{c}$ is critical if and only if $\vec{c} = \stab(\vec{c} + (1 + 2m_-)\vec{1})$.
\end{cor}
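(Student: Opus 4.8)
The plan is to deduce the corollary directly from Theorem~\ref{thm:EOfSinkFiring} by identifying $\vec{s}$ and exhibiting an explicit $N_0$. Since $q$ is a universal sink, every nonsink vertex is adjacent to $q$, so $\vec{s} = \vec{1}$. Writing $m_+$ for the (constant) number of positive edges at each nonsink vertex, so that the degree is $m = m_+ + m_-$, the whole statement reduces to verifying the two hypotheses of Theorem~\ref{thm:EOfSinkFiring} for the choice $N_0 = 1 + 2m_-$: that $\vec{1}$ is valid, and that $L^{-1}\big((1+2m_-)\vec{1}\big) \in \mathbb{Z}^n_{\geq 0}$. Both follow from the single identity $L\vec{1} = (1+2m_-)\vec{1}$, after which the criterion $\vec{c} = \stab(\vec{c} + (1+2m_-)\vec{1})$ is exactly the conclusion of the theorem. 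As usual I may assume $G \setminus \{q\}$ is connected, since otherwise $L$ is block diagonal and signed-regularity passes to each block together with the universal sink, so the argument applies componentwise.

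The crux is the row-sum computation $L\vec{1} = (1+2m_-)\vec{1}$. Fix a nonsink vertex $v_i$. Its diagonal entry is $(L)_{ii} = \deg(v_i) = m$, while each off-diagonal entry $(L)_{ij}$ with $j \neq i$ contributes $-1$ for a positive edge $\{v_i,v_j\}$ and $+1$ for a negative edge. Because $q$ is a universal sink joined to each nonsink vertex by a positive edge, exactly $m_+ - 1$ of the positive edges at $v_i$ and all $m_-$ of the negative edges at $v_i$ run to other nonsink vertices, so
\[
(L\vec{1})_i = m - (m_+ - 1) + m_- = (m_+ + m_-) - m_+ + 1 + m_- = 1 + 2m_-,
\]
independent of $i$. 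Hence $L\vec{1} = (1+2m_-)\vec{1}$, and therefore $L^{-1}\big((1+2m_-)\vec{1}\big) = \vec{1} \in \mathbb{Z}^n_{\geq 0}$, which is the second hypothesis and pins down $N_0 = 1+2m_-$.

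It remains to check that $\vec{s} = \vec{1}$ is valid, i.e. that $ML^{-1}\vec{1} \in R^+$. The identity above gives $L^{-1}\vec{1} = \tfrac{1}{1+2m_-}\vec{1}$, so $ML^{-1}\vec{1} = \tfrac{1}{1+2m_-} M\vec{1}$. Since $M = L_G$ is the reduced unsigned Laplacian, each row sum $(M\vec{1})_i$ equals the number of edges from $v_i$ to the sink, which is $1$ because $q$ is universal; thus $M\vec{1} = \vec{1}$ and $ML^{-1}\vec{1} = \tfrac{1}{1+2m_-}\vec{1} \geq \vec{0}$, while $LM^{-1}(ML^{-1}\vec{1}) = \vec{1} \in \mathbb{Z}^n$, so indeed $\vec{1} \in S^+$. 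With both hypotheses in hand, Theorem~\ref{thm:EOfSinkFiring} applied with $\vec{s} = \vec{1}$ and $N_0 = 1+2m_-$ yields the claim. The one delicate point — and the step I expect to be the main obstacle — is the sign bookkeeping in the row-sum computation: the value $1+2m_-$ hinges on the edges to $q$ being positive, so that they are counted among the $m_+$ positive edges at each vertex yet do not appear in the off-diagonal of the reduced Laplacian. Were some sink edge negative, the corresponding row sum would instead be $2m_- - 1$, so this positivity is genuinely used.
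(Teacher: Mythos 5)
Your proof is correct and follows essentially the same route as the paper's: both reduce to Theorem~\ref{thm:EOfSinkFiring} by observing that a universal sink forces $\vec{s}=\vec{1}$, establishing the eigenvector identities $L\vec{1}=(2m_-+1)\vec{1}$ and $M\vec{1}=\vec{1}$ to get validity of $\vec{1}$, and then taking $N_0=2m_-+1$. You are in fact slightly more explicit than the paper, which leaves the row-sum computation $L\vec{1}=(2m_-+1)\vec{1}$ unstated --- and with it the tacit assumption that the edges to the universal sink are positive, without which (as you correctly note) the constant would be $2m_--1$.
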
 

\begin{proof}
Since $q$ is universal we have that $\vec{s} = \vec{1}$, the vector of all $1$'s. Since $G_\phi$ is signed-regular, we have that $G \backslash q$ is regular and hence $M\vec{1} = \vec{1}$, so that $M^{-1}\vec{1} = \vec{1}$. Also $L \vec{1} = (2m_- +1)\vec{1}$, and hence $LM^{-1}\vec{s} = LM^{-1}\vec{1} = (2m_- +1)\vec{1}$. The result follows from Proposition~\ref{prop:EOfSinkFiring0}.
\end{proof}

Note in Corollary \ref{cor:SpecialSinkFire} we see that adding a multiple of $\vec{s}$ to a configuration $\vec{c}$ provides a way to check for criticality. For signed graphs that are not necessarily regular, we have a similar criteria, assuming $\vec{s}$ is valid.

\begin{prop}\label{prop:EOfSinkFiring}
Let $G_{\phi}$ be a signed graph with sink $q$. Suppose $\vec{s}$ is a valid configuration (so that $\vec{s} \in S^+$) and pick a positive integer $N_0$ such that $L^{-1}(N_{0} \vec{s}) \in \mathbb{Z}^n$. Then a configuration $\vec{c}\in S^+$ is critical if and only if $\mbox{stab}_{S^+}(\vec{c} + N_0\vec{s}) = \vec{c}$.
\end{prop}
\begin{proof}
Assume that $\vec{s} \in S^+$ and $L^{-1}(N_{0} \vec{s}) \in \mathbb{Z}^n$, and let $\vec{\ell} = ML^{-1}(N_0\vec{s})$. Note that since $\vec{s} \in S^+$ we have $ML^{-1} \vec{s} \in \R^n_{\geq 0}$ and hence $ML^{-1} \vec{s} \in \Z^n_{\geq 0}$.
Since $L^{-1}(N_{0} \vec{s}) \in \mathbb{Z}^n_{\geq 0}$, and $M = L_G$ is a usual graph Laplacian, we see that $\vec{\ell}$ is a so-called \emph{burning configuration} for $G$. 

Therefore we have by \cite[Proposition 2.6.27]{Klivans} that $\lfloor ML^{-1}\vec{c} \rfloor$ is a critical configuration of $G$ if and only if the $\stab_G(\lfloor ML^{-1}\vec{c} \rfloor+ \vec{\ell}) = \lfloor ML^{-1}\vec{c} \rfloor$. Thanks to Proposition~\ref{prop:keyFund} and using the same firing script, this is equivalent to $\mbox{stab}_{S^+}(\vec{c} + N_0\vec{s}) = \vec{c}$.
\end{proof}

Next we turn to the proof of Corollary \ref{cor:MaxCriticalConfig}, a generalization of the maximum stable configuration of unsigned graphs.

\begin{cor}\label{cor:MaxCriticalConfig}
Suppose $G_\phi$ is signed-regular with universal sink $q$, such that each nonsink vertex is incident to $m$ edges, with $m_-$ of them negative.  If $m^\prime = (m(2m_- + 1) - 1)$ then $m^\prime \vec{1}$ is a critical configuration.
Furthermore if $G_\phi = -G$, then any stable configuration $\vec{c}$ satisfies $\vec{c} \leq m^\prime\vec{1}$.
\end{cor}

\begin{proof}
First we assume that $G_\phi$ is signed-regular with $m$ edges incident to each nonsink vertex, $m_-$ of which are negative. As above we let $m' = m(2m_ - + 1)-1$. We first prove that $m'\vec{1}$ is a valid configuration, i.e. that $ML^{-1}(m'\vec{1}) \geq 0$. By the properties of $G,$ we obtain $$ML^{-1}(m'\vec{1}) = M\left(\frac{m'}{2m_-+1} \vec{1}\right)=\frac{m'}{2m_-+1} \vec{1} \geq \vec{0}.$$
 

To see that $m'\vec{1}$ is critical we observe $\lfloor ML^{-1}(m'\vec{1}) \rfloor = \lfloor \frac{m(2m_{-}+1)-1}{2m_-+1} \vec{1} \rfloor = (m-1)\vec{1}$. We then deduce that $\lfloor ML^{-1}(m\vec{1}) \rfloor =(m-1)\vec{1}$ is a critical configuration for $G$, noting that $\vec{0}$ is superstable and for $G$ we have $\vec{c}_{\max} = (m-1)\vec{1}$. By Theorem \ref{thm:KeyIdea} we conclude that $m'\vec{1}$ is critical for $G_\phi$

Now assume that $G_\phi = -G$, so that all edges not incident to the sink are negative (and hence $m_- = m-1$). We claim that any other stable configuration $\vec{c} \in S^+$  is componentwise smaller than $m^\prime \vec{1}$. For a contradiction, suppose there exists $\vec{c} \in S^+$ has an entry greater than $m' = m(2m_-+1) - 1$.

We claim there exists an entry of $L^{-1}\vec{c}$ that is at least $m$; we know this since otherwise all entries of $\vec{c} = L(L^{-1}\vec{c})$ would be less than $mm_-  + 
 m^2 = m(m_- + m) = m(2m_- + 1) = m'+1$. 
Now let $k = (L^{-1}\vec{c})_i$ be the largest entry of $(L^{-1}\vec{c})$ (so that in particular $k \geq m$), and let $\vec{v}$ denote the $i$th row of $M$. The dot product $\vec{v} \cdot (L^{-1}\vec{c})$ is then given by 
\[\vec{v} \cdot (L^{-1}\vec{c}) \geq mk - (m-1)k = k \geq m,\]
\noindent
since we are assuming that all entries of $L^{-1}\vec{c}$ are less than $k$. Hence the corresponding entry of $ML^{-1}\vec{c}$ is greater than $m$.  Since $\vec{c}$ is also valid, this implies that $ML^{-1}\vec{c} \in R^+$ is unstable as it can be fired at the $i$th vertex. This means that $\vec{c} \in S^+$ is unstable, and a contradiction is reached.
\end{proof}

\begin{ex}
    As an example, consider $-K_4$, the complete graph on 4 vertices where all edges not incident to the sink $q=4$ are negative. We see that $-K_4$ has a universal sink, and is signed regular with $m' = 14$. Hence Corollary \ref{cor:MaxCriticalConfig} implies that $\cvector{14}{14}{14}$ is a critical configuration that dominates all other stable configurations. In particular, the set of critical configurations is given by

    $$\biggl\{ \ccvector{14,14,14}, \ccvector{13,13,13},  \ccvector{12,12,12}, \ccvector{11,11,11}, \ccvector{11,11,10}, \ccvector{11,10, 11}, \ccvector{10,11,11}, \ccvector{10,10,10}, $$
    
 $$   \ccvector{10,10,9}, \ccvector{10,9,10},  \ccvector{9,10,10}, \ccvector{9,9,8}, \ccvector{9,8,9},  \ccvector{8, 9, 9}, \ccvector{8,8,7}, \ccvector{8,7,8}, \ccvector{7, 8, 8}, \ccvector{7,7,6}
    \ccvector{7,6,7}, \ccvector{6, 7, 7} \biggr\}.$$ 

\smallskip
\noindent We refer to Section \ref{sec:completeg} for more details regarding the critical groups of signed complete graphs.
\end{ex}

\section{Critical groups of signed graphs}\label{sec:groups}

We next study critical groups of signed graphs. Our first observation is a connection between the identity of the critical group of a signed graph $G_{\phi}$, and that of its underlying graph $G$.

\begin{thm}\label{thm:CriticalityIdentity}
Suppose $G_{\phi}$ is a signed graph with underlying graph $G$, and let $L = L_{G_\phi}$ and $M = L_G$ denote the respective Laplacians.  Let $\vec{\epsilon}_G$ denote the identity of the critical group ${\mathcal K}(G)$. Then the identity of ${\mathcal K}(G_\phi)$ is given by
\[\vec{\epsilon}_{G_\phi} = LM^{-1} \vec{\epsilon}_G.\]
\end{thm}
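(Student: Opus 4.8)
The plan is to recognize the claimed vector as the critical representative of the trivial class and then transport the problem to the underlying graph via Lemma \ref{lem:GuzKlivansSR}. As a configuration, the identity $\vec{e}_{G_\phi}$ of $\mathcal{K}(G_\phi) = \mathbb{Z}^n/\im L$ is the unique critical element of the trivial class $[\vec{0}]_L$, i.e. the unique critical $\vec{c} \in S^+$ with $\vec{c} \sim_L \vec{0}$. Writing $\vec{c} = LM^{-1}\vec{x}$ with $\vec{x} = ML^{-1}\vec{c} \in R^+$, Lemma \ref{lem:GuzKlivansSR} transfers both criticality and $\sim_L$-equivalence to $\vec{x}$, so that $\vec{e}_{G_\phi} = LM^{-1}\vec{x}$, where $\vec{x} \in R^+$ is the unique critical configuration with $\vec{x} \sim_M \vec{0}$. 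It therefore suffices to prove that $\vec{x} = \vec{e}_G$.

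First I would check that $\vec{e}_G$ itself lies in $R^+$ and is $\sim_M$-equivalent to $\vec{0}$. Since $\vec{e}_G$ is the critical representative of the identity of $\mathcal{K}(G) = \mathbb{Z}^n/\im M$, it lies in $\im M$, say $\vec{e}_G = M\vec{z}_0$ with $\vec{z}_0 \in \mathbb{Z}^n$; in particular $\vec{e}_G \sim_M \vec{0}$. Then $LM^{-1}\vec{e}_G = L\vec{z}_0 \in \mathbb{Z}^n$, and as $\vec{e}_G \geq \vec{0}$ we conclude $\vec{e}_G \in R^+$.

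The heart of the argument is to show that $\vec{e}_G$ is critical in $R^+$ for the pair $(L,M)$. The key observation is that on the coset $\vec{e}_G + \im M$ the $R^+$-dynamics coincide with ordinary chip-firing on $G$: for an integer vector $\vec{y}$ in this coset the membership condition $LM^{-1}(\vec{y} - M\vec{e}_i) \in \mathbb{Z}^n$ is automatic (both $L$ and $M$ have integer entries), so site $i$ is ready to fire precisely when $(\vec{y})_i \geq \deg(v_i)$, which is exactly the rule for $G$. Hence stability of $\vec{e}_G$ in $R^+$ is immediate from its stability as a critical configuration of $G$. For reachability I would use the identity $\vec{s} = M\vec{1}$ (the $i$th row sum of $M$ equals $\deg(v_i)$ minus the number of non-sink neighbors, i.e. the number of edges from $v_i$ to the sink): firing the sink keeps a configuration on the coset $\vec{e}_G + \im M$ and hence in $R^+$. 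Choosing $k$ large enough that the burning configuration $\vec{e}_G + k\vec{s}$ can fire every vertex and stabilizes to $\vec{e}_G$ (cf. the standard characterization in \cite[Theorem 2.6.3]{Klivans}), every configuration along this firing sequence is a nonnegative integer vector on the coset and so lies in $R^+$. This exhibits $\vec{e}_G$ as reachable, and therefore critical, in $R^+$.

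Finally, by the uniqueness of critical representatives (Theorem \ref{thm:class}), $\vec{e}_G$ must be the one critical configuration in $R^+$ satisfying $\vec{e}_G \sim_M \vec{0}$, so $\vec{x} = \vec{e}_G$ and $\vec{e}_{G_\phi} = LM^{-1}\vec{e}_G$, as claimed. I expect the main obstacle to be the reachability verification: one must ensure that the entire burning sequence never leaves $R^+$, and this is exactly what the relation $\vec{s} = M\vec{1}$ secures, since it confines all (reverse-)firings to the integer lattice $\im M$.
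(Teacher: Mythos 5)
Your proposal is correct and follows essentially the same route as the paper: both verify membership in $S^+$ via $\vec{e}_G = M\vec{v}$, transfer the problem to $R^+$ via Lemma \ref{lem:GuzKlivansSR}, observe that stability and order one are inherited from $\vec{e}_G$ being the identity of $\mathcal{K}(G)$, and establish reachability by firing the sink. Your write-up is in fact more detailed than the paper's (which just says to ``fire the bank suitably many times''), though the phrase ``$\vec{e}_G + k\vec{s}$ can fire every vertex'' should instead say that some legal firing sequence from $\vec{e}_G + k\vec{s}$ reaches a configuration from which every vertex can fire, since vertices not adjacent to $q$ gain no chips from sink-firing alone.
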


\begin{proof}
 We must show that $\vec{\epsilon}_{G_\phi}$ is valid,  stable and reachable, and that $[\vec{e}_{G_{\phi}}]$ has order $1$ in ${\mathcal K}(G_\phi)$, so that $\stab_{R^+}(\vec{\epsilon}_{G_\phi}+ \vec{\epsilon}_{G_\phi}) = \vec{\epsilon}_{G_\phi}$. 

Since $\vec{\epsilon}_G\sim_M \vec{0},$ we have $\vec{\epsilon}_G=M\vec{v}$ for some $\vec{v}\in \mathbb{Z}^{n}$. Thus, $LM^{-1} \vec{\epsilon}_G= LM^{-1}\left(M\vec{v}\right) = L\vec{v} \in \mathbb{Z}^{n},$ since $L$ is an integer matrix. This gives us $LM^{-1} \vec{\epsilon}_G \in S^+$. Now from Lemma~\ref{lem:GuzKlivansSR} and Lemma~\ref{lem:FiringSeq}, it suffices to show that $\vec{\epsilon}_G$ is reachable, since it being stable and having order $1$ follows directly from the fact that it is the identity of $\mathcal{K}(G)$. To see that $\vec{\epsilon}_G$ is reachable, start from $\vec{\epsilon}_G$ and fire the sink suitably many times to get a valid configuration with the property that it can be fired at every non-sink vertex. 
\end{proof}

We next show that the critical group is invariant under vertex switching. After posting a version of our paper to the ArXiv we were made aware that this observation also appeared as Corollary 44 in \cite{Adinkras}.

\begin{lem}\label{lem:SmithBalanced}
Suppose $G_\phi$ and $G_\psi$ are signed graphs that are switching equivalent.  Then we have an isomorphism of critical groups
\[{\mathcal K}(G_\phi) \cong {\mathcal K}(G_\psi).\]
\end{lem}

\begin{proof}

It suffices to prove that claim for a single vertex switching. For this suppose $G_\psi$ is obtained by switching the vertex of $G_\phi$ corresponding to row $i$ and column $i$ of the Laplacian $L_{G_\phi}$. Hence the matrix $L_{G_\psi}$ is obtained from $L_{G_\phi}$ by changing each -1 in row $i$ and column $i$ to a 1, and vice versa. This is equivalent to multiplying both row $i$ and column $i$ by $-1$ (which leaves the $i$th diagonal entry unchanged). Since $-1$ is a unit in ${\mathbb Z}$ these operations preserve the Smith normal form. The result follows from Remark \ref{snf}.
\end{proof}

\begin{cor}\label{cor:treegroup}
Suppose $G_\phi$ is a connected signed graph with sink $q$ such that $G \backslash q$ is a tree. Then there exists an isomorphism of critical groups
\[{\mathcal K}(G_\phi) \cong {\mathcal K}(|G|).\]
\end{cor}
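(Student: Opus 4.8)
The plan is to reduce $G_\phi$ to the all-positive graph $|G|$ by switching only the \emph{nonsink} vertices, exploiting the fact that the reduced Laplacian is blind to the signs of edges incident to the sink. First I would record the key structural observation about $L_{G_\phi}$: since we delete the row and column indexed by $q = v_{n+1}$, an off-diagonal entry $(L_{G_\phi})_{ij}$ with $i\neq j$ records only the sign of the edge $\{v_i,v_j\}$ between two \emph{nonsink} vertices (it is $-1$ for a positive edge, $+1$ for a negative edge, and $0$ for a non-edge), while the diagonal entry $(L_{G_\phi})_{ii} = \deg(v_i)$ is the degree in the underlying graph $G$ and is therefore independent of all signs. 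In particular the signs of edges incident to $q$ never enter $L_{G_\phi}$, and $L_{G_\phi}$ and $L_{|G|}$ agree in every diagonal entry.

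Next I would use the hypothesis that $T := G\backslash q$ is a tree. The edges of $G$ lying among the nonsink vertices are exactly the edges of $T$. Since a tree satisfies $|E(T)| - |V(T)| + 1 = 0$, Lemma \ref{lem: switchingclass} tells us there is a single switching class of signed graphs on $T$; equivalently, every signing of the tree is balanced. Restricting $\phi$ to $T$ and applying the corresponding sequence of vertex switchings at nonsink vertices, I obtain a signed graph $G_\psi$, switching equivalent to $G_\phi$, in which every edge of $T$ — equivalently every edge between nonsink vertices — is positive. Each such switching also flips the signs of edges incident to $q$, but by the observation above this has no effect on the reduced Laplacian.

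Finally I would compare $L_{G_\psi}$ with $L_{|G|}$ directly. Since all edges among nonsink vertices are now positive in $G_\psi$ and the diagonals already agree, we obtain the literal matrix equality $L_{G_\psi} = L_{|G|}$, whence ${\mathcal K}(G_\psi) = {\mathbb Z}^n/\im L_{G_\psi} = {\mathbb Z}^n/\im L_{|G|} = {\mathcal K}(|G|)$. Combining this with the switching invariance of the critical group (Lemma \ref{lem:SmithBalanced}) yields ${\mathcal K}(G_\phi) \cong {\mathcal K}(G_\psi) = {\mathcal K}(|G|)$, as desired.

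The one point requiring care — and the conceptual heart of the argument — is the justification that switching only the nonsink vertices suffices. This rests entirely on the fact that edges meeting the sink contribute nothing to $L_{G_\phi}$ beyond the sign-independent degrees, so their signs may be ignored when reducing the tree $T$ to its all-positive form; without this observation one would have to worry that switching a nonsink vertex alters an edge to $q$ in a way that obstructs reaching $|G|$, whereas in fact it is harmless.
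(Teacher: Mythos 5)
Your proof is correct and follows essentially the same route as the paper, which simply cites Lemma \ref{lem:SmithBalanced} and Lemma \ref{lem: switchingclass} without elaboration; you have filled in exactly the details that derivation requires, namely that every signing of the tree $G\setminus q$ can be switched (at nonsink vertices only) to the all-positive one, and that the reduced Laplacian is insensitive to the signs of edges incident to $q$. Your explicit emphasis on that last point is a welcome clarification of a step the paper leaves implicit.
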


\begin{proof}
    This follows from Lemmas \ref{lem:SmithBalanced} and \ref{lem: switchingclass}.
\end{proof}

\subsection{Cycle Graphs}\label{sec:cycle}


Having developed some results regarding chip-firing on signed graphs, we next turn our attention to some concrete examples. We first consider cycles.

We let $C_{n}$ denote the cycle graph on $n$ vertices with sink vertex $q = v_n$ and nonsink vertices $v_1, \dots, v_{n-1}$. The reduced Laplacian of $C_{n}$ is given by the $(n-1) \times (n-1)$ matrix $M = L_{C_{n}}$, where $$M_{ij} = \begin{cases} 2, & \mbox{$i = j$} \\ -1, & \mbox{$|i-j| = 1$}  \\ 0, & \mbox{otherwise.}\end{cases}$$ 

\begin{figure}[h]
 \begin{center}
 
   \scalebox{1.0}{    
\tikzset{every picture/.style={line width=0.75pt}} 

\begin{tikzpicture}[x=0.65pt,y=0.65pt,yscale=-1,xscale=1]

\draw [color={rgb, 255:red, 208; green, 2; blue, 27 }  ,draw opacity=1 ][line width=0.75]    (401.72,143.44) -- (384.33,172.9) -- (364.58,207.29) ;
\draw [color={rgb, 255:red, 208; green, 2; blue, 27 }  ,draw opacity=1 ][line width=0.75]    (364.58,77.05) -- (330.19,77.05) -- (290.28,77.14) ;
\draw  [fill={rgb, 255:red, 0; green, 0; blue, 0 }  ,fill opacity=1 ] (357.94,207.29) .. controls (357.94,203.59) and (360.91,200.58) .. (364.58,200.58) .. controls (368.24,200.58) and (371.21,203.59) .. (371.21,207.29) .. controls (371.21,211) and (368.24,214) .. (364.58,214) .. controls (360.91,214) and (357.94,211) .. (357.94,207.29) -- cycle ;
\draw  [fill={rgb, 255:red, 0; green, 0; blue, 0 }  ,fill opacity=1 ] (357.94,77.14) .. controls (357.94,73.44) and (360.91,70.43) .. (364.58,70.43) .. controls (368.24,70.43) and (371.21,73.44) .. (371.21,77.14) .. controls (371.21,80.85) and (368.24,83.85) .. (364.58,83.85) .. controls (360.91,83.85) and (357.94,80.85) .. (357.94,77.14) -- cycle ;
\draw  [fill={rgb, 255:red, 0; green, 0; blue, 0 }  ,fill opacity=1 ] (395.09,142.22) .. controls (395.09,138.51) and (398.06,135.51) .. (401.73,135.51) .. controls (405.39,135.51) and (408.36,138.51) .. (408.36,142.22) .. controls (408.36,145.92) and (405.39,148.93) .. (401.73,148.93) .. controls (398.06,148.93) and (395.09,145.92) .. (395.09,142.22) -- cycle ;
\draw  [fill={rgb, 255:red, 0; green, 0; blue, 0 }  ,fill opacity=1 ] (283.65,207.29) .. controls (283.65,203.59) and (286.62,200.58) .. (290.28,200.58) .. controls (293.95,200.58) and (296.92,203.59) .. (296.92,207.29) .. controls (296.92,211) and (293.95,214) .. (290.28,214) .. controls (286.62,214) and (283.65,211) .. (283.65,207.29) -- cycle ;
\draw    (364.58,77.14) -- (401.73,142.22) ;
\draw    (290.28,207.29) -- (294,207.29) -- (364.58,207.29) ;
\draw    (253.13,142.22) -- (290.28,207.29) ;
\draw [color={rgb, 255:red, 208; green, 2; blue, 27 }  ,draw opacity=1 ]   (253.13,142.22) -- (290.28,77.14) ;
\draw  [fill={rgb, 255:red, 0; green, 0; blue, 0 }  ,fill opacity=1 ] (283.65,77.14) .. controls (283.65,73.44) and (286.62,70.43) .. (290.28,70.43) .. controls (293.95,70.43) and (296.92,73.44) .. (296.92,77.14) .. controls (296.92,80.85) and (293.95,83.85) .. (290.28,83.85) .. controls (286.62,83.85) and (283.65,80.85) .. (283.65,77.14) -- cycle ;
\draw  [fill={rgb, 255:red, 0; green, 0; blue, 0 }  ,fill opacity=1 ] (246.5,142.22) .. controls (246.5,138.51) and (249.47,135.51) .. (253.13,135.51) .. controls (256.8,135.51) and (259.77,138.51) .. (259.77,142.22) .. controls (259.77,145.92) and (256.8,148.93) .. (253.13,148.93) .. controls (249.47,148.93) and (246.5,145.92) .. (246.5,142.22) -- cycle ;

\draw (266.8,202.62) node [anchor=north west][inner sep=0.75pt]    {$q$};
\draw (386.33,172.4) node [anchor=north west][inner sep=0.75pt]    {$-$};
\draw (253,94.4) node [anchor=north west][inner sep=0.75pt]    {$-$};
\draw (388,94.4) node [anchor=north west][inner sep=0.75pt]    {$+$};
\draw (250,170.4) node [anchor=north west][inner sep=0.75pt]    {$+$};
\draw (319,208.4) node [anchor=north west][inner sep=0.75pt]    {$+$};
\draw (320,57.4) node [anchor=north west][inner sep=0.75pt]    {$-$};

\end{tikzpicture}}
    \captionsetup{width=1.0\linewidth}
  \captionof{figure}{A signed cycle $(C_6)_\phi$.}
  \label{fig:signedC_6}

 \end{center}
\end{figure}
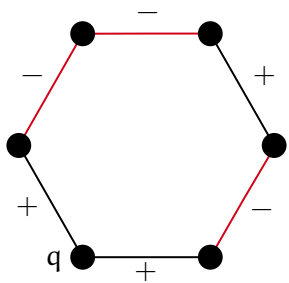

\begin{prop}\label{prop:Cyclecrit}
Suppose $(C_{n})_\phi$ is any signed cycle on $n$ vertices. Then the critical group satisfies
\[\mathcal{K}((C_{n})_\phi) \cong \mathbb{Z}_{n}.\]
\end{prop}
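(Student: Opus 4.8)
The plan is to reduce the computation to the unsigned cycle via the tree criterion, and then determine $\mathcal{K}(C_n)$ by a direct Smith normal form computation. First I would observe that deleting the sink $q = v_n$ from the cycle $C_n$ leaves the path on $v_1, \dots, v_{n-1}$, which is a tree. Since $(C_n)_\phi$ is connected, Corollary~\ref{cor:treegroup} applies verbatim and gives an isomorphism
\[\mathcal{K}((C_n)_\phi) \cong \mathcal{K}(|C_n|) = \mathcal{K}(C_n).\]
In particular the critical group is independent of the sign function $\phi$, so it suffices to treat the all-positive cycle. This single observation does almost all the work; the remaining task is purely a statement about the ordinary cycle graph.

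Next I would compute $\mathcal{K}(C_n)$ from the reduced Laplacian $M = L_{C_n}$, the $(n-1)\times(n-1)$ tridiagonal matrix with $2$'s on the diagonal and $-1$'s on the off-diagonals. By Remark~\ref{snf} it is enough to find the Smith normal form of $M$. A standard recurrence ($D_k = 2D_{k-1} - D_{k-2}$, $D_1 = 2$, $D_2 = 3$) shows $\det M = n$, so the product of the invariant factors equals $n$. To see that the first $n-2$ invariant factors are all $1$, I would exhibit an $(n-2)\times(n-2)$ minor equal to $\pm 1$: deleting the first row and the last column of $M$ leaves an upper-triangular matrix with every diagonal entry equal to $-1$, whose determinant is $(-1)^{n-2}$. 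Hence the greatest common divisor of the $(n-2)\times(n-2)$ minors is $1$, forcing the invariant factors to be $(1, \dots, 1, n)$ and giving $\mathcal{K}(C_n) \cong \mathbb{Z}_n$.

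The argument has no serious obstacle: the conceptual content is entirely in recognizing that $C_n \backslash q$ is a tree so that Corollary~\ref{cor:treegroup} applies, and the rest is the classical fact that the cycle graph has cyclic critical group $\mathbb{Z}_n$ (equivalently, that $C_n$ has exactly $n$ spanning trees, one for each deleted edge). If one prefers to avoid the minor computation, the same conclusion follows by explicitly row- and column-reducing $M$: using the corner $-1$ entries as pivots clears each row in turn and produces diagonal entries $1, \dots, 1$ until the final entry, which must equal $\det M = n$. Either route is routine; the only point to double-check is the indexing in the $(n-2)\times(n-2)$ minor, to confirm that the deleted-row/deleted-column submatrix is genuinely triangular.
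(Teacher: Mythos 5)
Your proof is correct and takes essentially the same route as the paper: both reduce to the unsigned cycle by observing that $C_n \setminus q$ is a tree and invoking Corollary~\ref{cor:treegroup}. The only difference is that the paper simply cites the classical fact $\mathcal{K}(C_n) \cong \mathbb{Z}_n$ from the literature, whereas you supply a correct self-contained Smith normal form computation (the determinant recurrence and the triangular $(n-2)\times(n-2)$ minor both check out).
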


\begin{proof}
Note that $C_n \backslash q$ is a path.  Hence by Corollary \ref{cor:treegroup}, the critical group is the same as that of $C_n$, which is known to be ${\mathbb Z}_{n}$ (see \cite[Theorem 4.5.6]{Klivans}).
\end{proof}

Recall that for an (unsigned) graph $G$, there is a simple duality between the critical configurations and superstable configurations. In particular, a configuration $\vec{c}$ is critical if and only if $\vec{c}_{\text{max}} - \vec{c}$ is superstable, where $\vec{c}_{\text{max}}$ is the maximum stable configuration defined in Section~\ref{sec:introduction section}.

In general it seems difficult to describe a similar duality for the case of signed graphs. For the case of the all-negative odd cycle $-C_{2m+1}$, however, we can recover such a result as described in Theorem \ref{thm:PetitDuality}. To establish this duality, we employ a statistic on configurations of $-C_{2m+1}$ that is invariant under chip-firing and leads to an explicit description of the critical group ${\mathcal K}(-C_{2m+1})$.  The construction is inspired by a similar statistic for unsigned cycles discussed in \cite[Problem 1.2]{CorryPerk}. See Figure \ref{fig:statistic} for an illustration of the case of $-C_5$.

\begin{figure}[h]
 \begin{center}
 
    \input{tiksstatistic}
    \captionsetup{width=1.0\linewidth}
  \captionof{figure}{Example of our statistic on signed odd cycles.}
  \label{fig:statistic}

 \end{center}
\end{figure}

\begin{lem}\label{lem:SuhoStatistic}
The map $f: {\mathcal K}(-C_{2m+1}) \to \mathbb{Z}_{2m+1}$ defined by $$\begin{pmatrix}x_1 \\ \vdots \\ x_{2m}\end{pmatrix} + \img L_{-C_{2m+1}}  \mapsto \sum_{j=1}^{2m}(-1)^{j+1}jx_j \pmod{2m+1}$$ is an isomorphism.
\end{lem}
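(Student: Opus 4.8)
The plan is to exhibit $f$ as the map induced on the quotient $\mathcal{K}(-C_{2m+1}) = \mathbb{Z}^{2m}/\im L$ by an explicit $\mathbb{Z}$-linear functional, and then to check that this functional annihilates $\im L$ and surjects onto $\mathbb{Z}_{2m+1}$. First I would record the relevant matrix: since $q = v_{2m+1}$ is the sink and every edge joining two nonsink vertices is negative, deleting the sink row and column leaves the path $v_1 - v_2 - \cdots - v_{2m}$ with all edges negative. Hence $L = L_{-C_{2m+1}}$ is the $2m \times 2m$ tridiagonal matrix with $2$ on the diagonal and $+1$ on the two off-diagonals (consistent with the case $m=1$, where $L_{-C_3} = \begin{pmatrix}2&1\\1&2\end{pmatrix}$). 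By Proposition \ref{prop:Cyclecrit} we already know $\mathcal{K}(-C_{2m+1}) \cong \mathbb{Z}_{2m+1}$, so it is a finite group of order $2m+1$; this lets me conclude from surjectivity alone.

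Next I would introduce the group homomorphism $\tilde{f} : \mathbb{Z}^{2m} \to \mathbb{Z}_{2m+1}$ given by $\tilde{f}(\vec{x}) = \sum_{j=1}^{2m}(-1)^{j+1} j x_j \bmod (2m+1)$. To see that $\tilde{f}$ descends to $\mathcal{K}(-C_{2m+1})$ and agrees with $f$, it suffices to show $\im L \subseteq \ker \tilde{f}$, and since the columns $L\vec{e}_i$ generate $\im L$ this reduces to checking $\tilde{f}(L\vec{e}_i) \equiv 0 \pmod{2m+1}$ for each $i$. For an interior index $1 < i < 2m$ the $i$th column is $\vec{e}_{i-1} + 2\vec{e}_i + \vec{e}_{i+1}$, and direct substitution telescopes to $\tilde{f}(L\vec{e}_i) = (-1)^i\big[(i-1) - 2i + (i+1)\big] = 0$ exactly. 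The two boundary columns behave slightly differently: the first column $2\vec{e}_1 + \vec{e}_2$ gives $2 - 2 = 0$, while the last column $\vec{e}_{2m-1} + 2\vec{e}_{2m}$ gives $(2m-1) - 4m = -(2m+1) \equiv 0 \pmod{2m+1}$. Thus $f$ is well-defined.

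Finally, since $\tilde{f}(\vec{e}_1) = 1$, the induced map $f$ sends $[\vec{e}_1]$ to a generator of $\mathbb{Z}_{2m+1}$, so $f$ is surjective; as both groups have order $2m+1$, surjectivity forces $f$ to be an isomorphism. I expect essentially no conceptual obstacle here, as the whole argument is a bookkeeping verification. The one spot demanding care is the last-column computation, where the value vanishes only modulo $2m+1$ rather than identically as for the interior columns; this is exactly where the length $2m+1$ of the cycle enters, and it is the natural place for an off-by-one or a sign slip in the alternating weights to hide. I would therefore sanity-check the small case $-C_5$ (where $L$ is the $4 \times 4$ tridiagonal matrix with $\det L = 5$) against the statistic depicted in Figure \ref{fig:statistic} before finalizing.
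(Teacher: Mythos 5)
Your proposal is correct and follows essentially the same route as the paper: show the linear statistic annihilates each column of $L_{-C_{2m+1}}$ (the paper phrases this as invariance under vertex firings mod $2m+1$), observe surjectivity via $\vec{e}_1$, and conclude by comparing orders using Proposition \ref{prop:Cyclecrit}. Your version merely makes explicit the boundary-column computation where reduction mod $2m+1$ is genuinely needed, which the paper leaves implicit.
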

\begin{proof} 

To see that $f$ is well defined, any firing of a vertex (which decreases a coordinate by $2$ and decreases the adjacent coordinate(s) by $1$ each) preserves the value of the expression $\sum_{j=1}^{2m}(-1)^{j+1}jx_j \pmod{2m+1}$. It is also clear that $f$ is a homomorphism since the expression on the right-hand side is a linear expression. The map $f$ is also surjective since for any $a \in \mathbb{Z}_{2m+1}$, we have $a(\vec{e}_1)$ in its pre-image. Combined with $|{\mathcal K}(-C_{2m+1})| = 2m+1$ from Proposition \ref{prop:Cyclecrit}, we conclude that $f$ is an isomorphism.
\end{proof}

We next show that all $z$-superstable configurations of $-C_{2m+1}$ share a common property. We say that $\vec{c} \in {\mathbb Z}^{2m}$ is \emph{palindromic} if for all $i = 1, \dots, 2m$ we have 
$(\vec{c})_{2m+1-i} = (\vec{c})_{i}$.


\begin{lem}\label{lem:symmetry}
For all $m \geq 1$, each $z$-superstable configuration of $-C_{2m+1}$ is palindromic.
\end{lem}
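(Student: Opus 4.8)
The plan is to exploit the reflection symmetry of $-C_{2m+1}$ that fixes the sink. Recall that both $L = L_{-C_{2m+1}}$ and $M = L_{C_{2m+1}}$ are the $2m \times 2m$ symmetric tridiagonal Toeplitz matrices with $2$ on the diagonal and equal entries on the two off-diagonals (namely $+1$ for $L$, since every nonsink edge is negative, and $-1$ for $M$). Let $P$ be the permutation matrix that reverses coordinate order, so that $(P\vec{c})_i = (\vec{c})_{2m+1-i}$, and note $P^2 = I$. Since reversing the rows and columns of a symmetric tridiagonal Toeplitz matrix returns the same matrix, we have the identities $PLP = L$ and $PMP = M$. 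The key observation is that a configuration $\vec{c}$ is palindromic precisely when $P\vec{c} = \vec{c}$, so it suffices to show that every $z$-superstable configuration of $-C_{2m+1}$ is fixed by $P$.

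First I would show that $P$ is a symmetry of the whole chip-firing setup. From $PLP = L$ and $PMP = M$ (hence $PM^{-1}P = M^{-1}$) one computes $P(LM^{-1}\vec{x}) = LM^{-1}(P\vec{x})$ for every $\vec{x}$; since $P$ permutes both the nonnegative orthant and the integer lattice to themselves, it follows that $P$ maps $S^+$ bijectively onto $S^+$. Moreover, for any $\vec{z}$ we have $P(\vec{c} - L\vec{z}) = P\vec{c} - L(P\vec{z})$, and as $\vec{z}$ runs over the nonzero vectors of $\mathbb{Z}^{2m}_{\geq 0}$, so does $P\vec{z}$. Combining these facts, $\vec{c} - L\vec{z} \notin S^+$ for all nonzero $\vec{z} \in \mathbb{Z}^{2m}_{\geq 0}$ if and only if the same statement holds with $P\vec{c}$ in place of $\vec{c}$; that is, $\vec{c}$ is $z$-superstable if and only if $P\vec{c}$ is.

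Next I would verify that $P$ fixes every class of the critical group, using the statistic $f$ of Lemma \ref{lem:SuhoStatistic}. Reindexing the sum via $k = 2m+1-j$ and using $(-1)^{(2m+1-j)+1} = (-1)^{j}$ gives
\[
f(P\vec{c}) = \sum_{k=1}^{2m}(-1)^{k}(2m+1-k)(\vec{c})_k \equiv \sum_{k=1}^{2m}(-1)^{k+1}k\,(\vec{c})_k = f(\vec{c}) \pmod{2m+1},
\]
where the congruence drops the multiple of $2m+1$ and flips the overall sign. Since $f$ is an isomorphism by Lemma \ref{lem:SuhoStatistic}, equal values of $f$ force $P\vec{c} \sim_{L} \vec{c}$, so $P$ acts as the identity on $\mathcal{K}(-C_{2m+1})$.

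Putting this together, if $\vec{c}$ is $z$-superstable then $P\vec{c}$ is a $z$-superstable configuration lying in the same equivalence class $[\vec{c}]_L$. By Theorem \ref{thm:class} each class contains a unique $z$-superstable configuration, which forces $P\vec{c} = \vec{c}$; that is, $\vec{c}$ is palindromic. I expect no serious obstacle here, as the argument is essentially symmetry bookkeeping; the one step that deserves care is confirming that $P$ descends to the identity on $\mathcal{K}(-C_{2m+1})$, and the statistic computation above handles this cleanly.
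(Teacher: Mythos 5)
Your proof is correct and follows essentially the same route as the paper's: reflect the configuration, use the symmetry of $-C_{2m+1}$ to see the reflection is again $z$-superstable, use the statistic $f$ of Lemma \ref{lem:SuhoStatistic} to place it in the same class, and invoke the uniqueness in Theorem \ref{thm:class}. Your version merely makes the symmetry step more explicit via the permutation matrix $P$ and the identities $PLP=L$, $PMP=M$, which the paper leaves as "by symmetry."
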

\begin{proof}
Assume $\vec{c}$ is $z$-superstable and let $\vec{d}$ be the configuration obtained from $\vec{c}$ by flipping the coordinates: $(\vec{d})_{2m+1-i} = (\vec{c})_i$ for all $i$. By symmetry of $-C_{2m+1}$, we see that $\vec{d}$ is also $z$-superstable.
Using $f$ from Lemma \ref{lem:SuhoStatistic} we have that $f(\vec{d}) = f(\vec{c})$ since
\[(-1)^i a \equiv (-1)^{2m+1-i} (2m+1-a) \pmod{2m+1}\]
\noindent
for any $i$ and $a \in \mathbb{Z}_{2m+1}$. Therefore $\vec{c}$ and $\vec{d}$ are in the same equivalence class defined by $\img L_{-C_{2m+1}}$, which by Theorem~\ref{thm:class} implies that they are equal. Hence $\vec{c}$ has to be palindromic.
\end{proof}

Finally we are ready to provide the proof of the duality.

\begin{thm}\label{thm:PetitDuality}
Suppose $m\geq 1$ is an integer. Then the set map $$\Delta: \{\mbox{z-superstable configurations of $-C_{2m+1}$}\} \to \{\mbox{critical configurations of $-C_{2m+1}$}\}$$ given by $\vec{c}     \mapsto \vec{\epsilon}_{-C_{2m+1}} + \vec{c}$ is a bijection.
\end{thm}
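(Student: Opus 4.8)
The plan is to reduce the whole statement to a single stability check. First I would record two routine reductions: since $\Delta$ is translation by the fixed vector $\vec{e}_{-C_{2m+1}}$ it is automatically injective, and by Proposition \ref{prop:Cyclecrit} together with Theorem \ref{thm:class} both the domain and the codomain have exactly $2m+1$ elements. Hence it suffices to show that $\Delta(\vec{c}) = \vec{e}_{-C_{2m+1}} + \vec{c}$ is critical for every $z$-superstable $\vec{c}$, since injectivity plus equal finite cardinality then forces bijectivity. Moreover $[\vec{e}_{-C_{2m+1}} + \vec{c}] = [\vec{c}]$ because $\vec{e}_{-C_{2m+1}}$ represents the identity, so $\Delta(\vec{c})$ already lies in the right class and we only need it to be the unique critical representative of that class.

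The key structural observation, which holds for any signed graph, is that the critical representative of a class $[\vec{c}]$ is always $\mbox{stab}_{S^+}(\vec{e}_{G_\phi} + \vec{c})$. Indeed $\vec{e}_{G_\phi}$ is critical, hence reachable, so there is a $\vec{d} \in S^+$ that can be fired at every site with $\mbox{stab}_{S^+}(\vec{d}) = \vec{e}_{G_\phi}$. Because $S^+$ is closed under addition (it is the set of integer points of the cone spanned by the columns of $LM^{-1}$) and $\vec{c} \in S^+$, the configuration $\vec{d} + \vec{c}$ can still be fired at every site; its stabilization is therefore reachable and stable, i.e.\ critical, and by confluence of stabilization it equals $\mbox{stab}_{S^+}(\vec{e}_{G_\phi} + \vec{c})$. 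In particular, if $\vec{e}_{G_\phi} + \vec{c}$ is already stable then it is critical. Thus the theorem collapses to the claim that $\vec{e}_{-C_{2m+1}} + \vec{c}$ is stable whenever $\vec{c}$ is $z$-superstable.

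To analyze stability I would pass to $R^+$ via Lemmas \ref{lem:GuzKlivansSR} and \ref{lem:FiringSeq}. Writing $\vec{x} = ML^{-1}\vec{c}$ and using Theorem \ref{thm:CriticalityIdentity}, one has $ML^{-1}\vec{e}_{-C_{2m+1}} = \vec{e}_{C_{2m+1}}$, and for the odd cycle the sandpile identity is the all-ones (maximal stable) configuration $\vec{1}$. So $\vec{e}_{-C_{2m+1}} + \vec{c}$ is stable iff $\vec{1} + \vec{x}$ is stable in $R^+$ under $M = L_{C_{2m+1}}$. Firing site $i$ subtracts $M\vec{e}_i$, turning the $i$th coordinate into $x_i - 1$ while increasing the neighbors; since $\vec{x} \geq \vec{0}$, the result is nonnegative iff $x_i \geq 1$, so $\vec{1} + \vec{x}$ is stable precisely when $x_i < 1$ for all $i$. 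To prove this bound I combine two facts: $\vec{c}$ is palindromic by Lemma \ref{lem:symmetry}, hence $\vec{x} = ML^{-1}\vec{c}$ is palindromic because $L$ and $M$ are centrosymmetric Toeplitz matrices and so $ML^{-1}$ preserves palindromes; and $\vec{c}$ is $\chi$-superstable by Theorem \ref{thm:zchigeneral}. Applying $\chi$-superstability to the indicator of the symmetric interval $\{i, i+1, \dots, 2m+1-i\}$ (the full set when $i=1$, giving $M\vec{1} = \vec{e}_1 + \vec{e}_{2m}$, and $\vec{e}_i + \vec{e}_{2m+1-i}$ minus the two flanking indicators for $2 \leq i \leq m$), one finds $\vec{x} - M\vec{z} \not\geq \vec{0}$ forces $x_i < 1$ or $x_{2m+1-i} < 1$; palindromy collapses this to $x_i < 1$. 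Letting $i$ run over $1, \dots, m$ covers every coordinate.

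The step I expect to be the main obstacle is exactly this bound $x_i < 1$: the $\chi$-superstability inequalities only yield disjunctions of the form ``$x_i < 1$ or $x_{2m+1-i} < 1$'', and it is the palindromic symmetry of $z$-superstable configurations that lets them collapse to the pointwise bound, so the argument genuinely depends on the interplay of Lemma \ref{lem:symmetry} and Theorem \ref{thm:zchigeneral}. A secondary point needing care is the identification $\vec{e}_{C_{2m+1}} = \vec{1}$; this holds for odd cycles because the firing invariant $\sum_j j x_j \bmod (2m+1)$ of the unsigned cycle vanishes on $\vec{1}$ (as $\binom{2m+1}{2} \equiv 0$) and $\vec{1}$ is critical, whereas it fails for even cycles. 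This is precisely where the oddness of $-C_{2m+1}$ enters the proof.
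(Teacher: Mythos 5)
Your proposal is correct and follows essentially the same route as the paper: reduce to showing $\vec{e}_{-C_{2m+1}}+\vec{c}$ is critical, prove stability by passing to $R^+$, using $ML^{-1}\vec{e}_{-C_{2m+1}}=\vec{e}_{C_{2m+1}}=\vec{1}$, the palindromy of $z$-superstable configurations (Lemma \ref{lem:symmetry}), and the set-firings of the symmetric intervals $\{v_j,\dots,v_{2m+1-j}\}$ to force $(ML^{-1}\vec{c})_i<1$, and prove reachability by adding $\vec{c}$ to a configuration that fires down to the identity. Your explicit treatment of injectivity plus cardinality, and your justification of $\vec{e}_{C_{2m+1}}=\vec{1}$ via the firing invariant, only make explicit what the paper leaves to citations.
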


\begin{proof}
Let $\vec{c}$ be a $z$-superstable configuration. Our goal is to show that $\vec{d} := \vec{e}_{-C_{2m+1}} + \vec{c}$ is critical.

We first show that $\vec{d}$ is stable. Note that by Theorem~\ref{thm:CriticalityIdentity} we have that $ML^{-1}\vec{e}_{-C_{2m+1}} = \vec{e}_{C_{2m+1}}$. Using the method of \cite[Problem 1.2]{CorryPerk} one can check that $\vec{e}_{C_{2m+1}} = \vec{1}$. Recall that $\vec{c}$ is palindromic via Lemma \ref{lem:symmetry}, and since $-C_{2m+1}$ is symmetric we have that $ML^{-1}\vec{c}$ is also palindromic. 

From $z$-superstability of $ML^{-1}\vec{c}$, it follows that set-firing $\{v_j, \dots, v_{(2m+1)-j}\}$ will not result in an allowable configuration for any $1 \leq j \leq m$. 

If $j \neq 1$ we have $$ML^{-1}\vec{c} - M\left(\sum_{s=j}^{(2m+1)-j}\vec{e}_s\right)=ML^{-1}\vec{c}-\left(\vec{e}_j+\vec{e}_{(2m+1)-j}\right)+\left(\vec{e}_{j-1}+\vec{e}_{(2m+1)-(j-1)}\right)\not\geq 0,$$ 

If $j = 1$ we have $$ML^{-1}\vec{c} - M\left(\sum_{s=j}^{(2m+1)-j}\vec{e}_s\right)=ML^{-1}\vec{c}-\left(\vec{e}_j+\vec{e}_{(2m+1)-j}\right)\not\geq 0.$$

This gives us $(ML^{-1}\vec{c})_i=(ML^{-1}\vec{c})_{(2m+1)-i}<1$ for all $i$. This implies $(ML^{-1}\vec{d})_i < 2$ for all $i$, giving us the stability of $\vec{d}$.

This implies $(ML^{-1}\vec{c})_i < 2$ for all $i$ and hence the stability of $\vec{d}$.

It only remains to show the reachability of $\vec{d}$. Since $\vec{e}_{-C_{2m+1}}$ is reachable, we can find some $\vec{b}$ that can be fired at all vertices and stabilizes to $\vec{e}_{-C_{2m+1}}$. Since $\vec{c} \in S^+$, we have that $\vec{b} + \vec{c}$ can be fired at all vertices and stabilizes to $\vec{e}_{-C_{2m+1}} + \vec{c}  = \vec{d}$ using the same firing sequence. 

such that $\stab(\vec{b}) = \vec{e}_{-C_{2m+1}}$ and such that $ML^{-1}\vec{b}-M\vec{e}_i\geq \vec{0}$ for all $i$. Now since $\vec{c} \in S^+$, we have that $\vec{b} + \vec{c}$ stabilizes to $\vec{d}$ and is firable everywhere

Recall that $ML^{-1}\vec{c} \geq \vec{0}$ follows from $\vec{c} \in S^+$. So we get $ML^{-1}\left(\vec{b}+\vec{c}\right)-M\vec{e}_i \geq \vec{0}$ for all $i$, meaning that $\vec{b} + \vec{c}$ is a configuration where all vertices can be fired. Moreover since we reach $\vec{e}_{-C_{2n+1}}$ by stabilizing $\vec{b}$ starting from, we can replicate this firing sequence to reach $\vec{d}$ as stabilization of $\vec{b} + \vec{c}$. This proves the claim.

\end{proof}

For even $n$, the map $f$ employed in Lemma \ref{lem:SuhoStatistic} exhibits less predictable behavior and we have not been able to demonstrate a similar duality. Hence we ask the following.

\begin{question}
Does there exist a easily described duality between the set of critical and $z$-superstable configurations of the negative cycles $-C_{n}$ for the case $n$ is even?
\end{question}

\subsection{Complete Graphs}\label{sec:completeg}

We next consider signed complete graphs $(K_n)_{\phi}$. In general, the critical group of such graphs seems to be quite complicated. Despite that, there are some cases where the critical group is fairly simple to describe.

Recall that from Lemma~\ref{lem:SmithBalanced}, the critical group only depends on the switching class of the signed graph. We start with the obvious case, of when $(K_n)_{\phi}$ is switching equivalent to the unsigned complete graph $K_n$. A version of Cayley's Theorem \cite[Theorem 4.5.7]{Klivans} tells us that ${\mathcal K}((K_{n})_{\phi}) \cong {\mathcal K}(K_{n}) \cong \mathbb{Z}_{n}^{n-2}$.
We next turn our attention to signed complete graphs that are switching equivalent to $-K_n$, the \emph{negative complete graph} where all edges not incident to sink are negative. See Figure \ref{fig:signedcomplete} for an example.

\begin{figure}[h]
 \begin{center}
 
    \tikzset{every picture/.style={line width=0.75pt}} 

\begin{tikzpicture}[x=0.75pt,y=0.75pt,yscale=-1,xscale=1]

\draw    (126.87,242.4) -- (227.84,169.41) ;
\draw    (126.87,242.4) -- (165.66,124) ;
\draw  [fill={rgb, 255:red, 0; green, 0; blue, 0 }  ,fill opacity=1 ] (120.24,242.4) .. controls (120.24,238.69) and (123.21,235.69) .. (126.87,235.69) .. controls (130.53,235.69) and (133.5,238.69) .. (133.5,242.4) .. controls (133.5,246.1) and (130.53,249.11) .. (126.87,249.11) .. controls (123.21,249.11) and (120.24,246.1) .. (120.24,242.4) -- cycle ;
\draw [color={rgb, 255:red, 208; green, 2; blue, 27 }  ,draw opacity=1 ]   (165.66,124) -- (227.84,169.41) ;
\draw [color={rgb, 255:red, 208; green, 2; blue, 27 }  ,draw opacity=1 ]   (103.26,169.11) -- (165.66,124) ;
\draw [color={rgb, 255:red, 0; green, 0; blue, 0 }  ,draw opacity=1 ]   (203.87,242.58) -- (227.84,169.41) ;
\draw    (126.87,242.4) -- (203.87,242.58) ;
\draw    (103.26,169.11) -- (126.87,242.4) ;
\draw [color={rgb, 255:red, 208; green, 2; blue, 27 }  ,draw opacity=1 ]   (103.26,169.11) -- (227.84,169.41) ;
\draw [color={rgb, 255:red, 0; green, 0; blue, 0 }  ,draw opacity=1 ]   (103.26,169.11) -- (203.87,242.58) ;
\draw [color={rgb, 255:red, 0; green, 0; blue, 0 }  ,draw opacity=1 ]   (203.87,242.58) -- (165.66,124) ;
\draw  [fill={rgb, 255:red, 0; green, 0; blue, 0 }  ,fill opacity=1 ] (221.21,169.41) .. controls (221.21,165.71) and (224.18,162.7) .. (227.84,162.7) .. controls (231.51,162.7) and (234.48,165.71) .. (234.48,169.41) .. controls (234.48,173.12) and (231.51,176.12) .. (227.84,176.12) .. controls (224.18,176.12) and (221.21,173.12) .. (221.21,169.41) -- cycle ;
\draw  [fill={rgb, 255:red, 0; green, 0; blue, 0 }  ,fill opacity=1 ] (197.24,242.58) .. controls (197.24,238.88) and (200.21,235.88) .. (203.87,235.88) .. controls (207.53,235.88) and (210.5,238.88) .. (210.5,242.58) .. controls (210.5,246.29) and (207.53,249.29) .. (203.87,249.29) .. controls (200.21,249.29) and (197.24,246.29) .. (197.24,242.58) -- cycle ;
\draw  [fill={rgb, 255:red, 0; green, 0; blue, 0 }  ,fill opacity=1 ] (159.03,124) .. controls (159.03,120.29) and (162,117.29) .. (165.66,117.29) .. controls (169.32,117.29) and (172.29,120.29) .. (172.29,124) .. controls (172.29,127.71) and (169.32,130.71) .. (165.66,130.71) .. controls (162,130.71) and (159.03,127.71) .. (159.03,124) -- cycle ;
\draw  [fill={rgb, 255:red, 0; green, 0; blue, 0 }  ,fill opacity=1 ] (96.62,169.11) .. controls (96.62,165.4) and (99.59,162.4) .. (103.26,162.4) .. controls (106.92,162.4) and (109.89,165.4) .. (109.89,169.11) .. controls (109.89,172.81) and (106.92,175.82) .. (103.26,175.82) .. controls (99.59,175.82) and (96.62,172.81) .. (96.62,169.11) -- cycle ;

\draw (119.65,252.54) node [anchor=north west][inner sep=0.75pt]    {$q$};
\draw (124,200.4) node [anchor=north west][inner sep=0.75pt]    {$+$};
\draw (143,208.4) node [anchor=north west][inner sep=0.75pt]    {$+$};
\draw (102,202.4) node [anchor=north west][inner sep=0.75pt]    {$+$};
\draw (157,242.4) node [anchor=north west][inner sep=0.75pt]    {$+$};
\draw (173,207.4) node [anchor=north west][inner sep=0.75pt]    {$+$};
\draw (194,200.4) node [anchor=north west][inner sep=0.75pt]    {$+$};
\draw (216,202.4) node [anchor=north west][inner sep=0.75pt]    {$+$};
\draw (159,152.4) node [anchor=north west][inner sep=0.75pt]    {$-$};
\draw (122,132.4) node [anchor=north west][inner sep=0.75pt]    {$-$};
\draw (198,132.4) node [anchor=north west][inner sep=0.75pt]    {$-$};

\end{tikzpicture}
    \captionsetup{width=1.0\linewidth}
  \captionof{figure}{A signed complete graph $(K_5)_\phi$ that is switching equivalent to $-K_5$.}\label{fig:signedcomplete}

 \end{center}
 \end{figure}
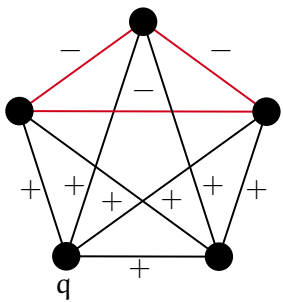

\begin{prop}\label{prop:Completecrit}
Suppose $n \geq 2$ and let $(K_{n})_\phi$ be a signed complete graph with sink $q$ that is switching equivalent to $-K_{n}$. 
Then 
\[{\mathcal K}((K_{n})_\phi) \cong \mathbb{Z}_{n-2}^{n-3} \oplus  \mathbb{Z}_{(n-2)(2n-3)}.\]
\end{prop}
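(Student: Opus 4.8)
The plan is to reduce everything to a single Smith normal form computation. First, by Lemma~\ref{lem:SmithBalanced} the critical group is an invariant of the switching class, so it suffices to compute ${\mathcal K}(-K_n)$ for the negative complete graph $-K_n$ with sink $q$. I would then write down its reduced Laplacian explicitly: since every nonsink vertex has degree $n-1$ and every edge between two nonsink vertices is negative, the $(n-1)\times(n-1)$ matrix $L = L_{-K_n}$ has $n-1$ on the diagonal and $+1$ in every off-diagonal position. Writing $J$ for the all-ones matrix, this is
\[
L = (n-2)I + J ,
\]
an $(n-1)\times(n-1)$ matrix. As a sanity check, its eigenvalues are $2n-3$ (once) and $n-2$ (with multiplicity $n-2$), so $\det L = (2n-3)(n-2)^{n-2}$, which agrees with the order $(n-2)^{n-3}\cdot(n-2)(2n-3)$ of the claimed group.

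By Remark~\ref{snf} it then remains to compute the invariant factors of $L$ by integral row and column operations. The key feature to exploit is that $L$ has $1$'s off the diagonal, so the gcd of all entries is $1$ and we can manufacture a unit. Concretely, I would first apply the column operations $C_j \mapsto C_j - C_1$ for $j \geq 2$; this makes every off-diagonal column supported in two rows, and after a row swap produces a $1$ in the top-left corner. Clearing the first row and column with this $1$ splits off a block of the form $\begin{pmatrix}1 & 0 \\ 0 & A\end{pmatrix}$, where $A$ is an explicit $(n-2)\times(n-2)$ matrix. The crucial observation is that every entry of $A$ is divisible by $n-2$, so we may write $A = (n-2)B$ for an integer matrix $B$ whose first row and column carry the entries $-n,-1,\dots,-1$ and whose lower-right $(n-3)\times(n-3)$ block is the identity.

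Since unimodular operations commute with scaling, the invariant factors of $A=(n-2)B$ are exactly $(n-2)$ times those of $B$, so it suffices to find the Smith normal form of $B$. Here the identity block again supplies units: adding columns $2,\dots,n-2$ to the first column clears the off-diagonal $-1$'s in the first column and turns its top entry into $-(2n-3)$, after which the first row is cleared by symmetric operations, leaving $\mathrm{diag}\bigl(2n-3,1,\dots,1\bigr)$ up to signs (which are units in $\mathbb{Z}$). Hence the invariant factors of $B$ are $1,\dots,1,2n-3$, those of $A$ are $n-2,\dots,n-2,(n-2)(2n-3)$ with $n-3$ copies of $n-2$, and therefore
\[
{\mathcal K}(-K_n) \cong \text{coker}(A) \cong \mathbb{Z}_{n-2}^{\,n-3}\oplus \mathbb{Z}_{(n-2)(2n-3)},
\]
the $\mathbb{Z}/1$ block contributing nothing. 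I expect the only real obstacle to be bookkeeping: keeping the row and column operations and the resulting sparse-but-irregular intermediate matrices straight, and verifying that the divisibility condition $p_i \mid p_{i+1}$ holds for the final list $1 \mid (n-2) \mid \cdots \mid (n-2) \mid (n-2)(2n-3)$, so that it is genuinely the invariant-factor decomposition.
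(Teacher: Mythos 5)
Your proof is correct, and its skeleton is the same as the paper's: reduce to $-K_n$ via switching invariance (Lemma \ref{lem:SmithBalanced}), identify $L_{-K_n} = (n-2)I_{n-1} + J_{n-1}$, and read off the critical group from the Smith normal form. The one genuine difference is in how the invariant factors of $(n-2)I+J$ are obtained: the paper delegates this step entirely to a cited result (the argument of Corollary 7.6 of Reiner--Tseng), whereas you carry out the reduction by hand with explicit unimodular operations. Your computation checks out at every stage --- the column operations $C_j \mapsto C_j - C_1$ followed by a row swap do produce a unit pivot, the resulting block $A$ is indeed $(n-2)B$ with $B$ having first row and column $(-n,-1,\dots,-1)$ and an identity lower-right block, the scaling argument for invariant factors of $(n-2)B$ is valid, and the final list $1 \mid (n-2) \mid \cdots \mid (n-2) \mid (n-2)(2n-3)$ satisfies the divisibility condition. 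What your version buys is self-containment (no reliance on the external reference), at the cost of the bookkeeping you anticipate; the eigenvalue sanity check $\det L = (2n-3)(n-2)^{n-2}$ is a nice additional safeguard that the paper does not include.
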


\begin{proof}
As noted above, it suffices to compute the critical group of $-K_{n}$. For this note that the signed Laplacian $L = L_{-K_{n}}$ is given by $(n-2)I_{n-1} - (-1)J_{n-1}$ where $I_{n-1}$ and $J_{n-1}$ are the $(n-1) \times (n-1)$ identity matrix and the all-ones matrix, respectively.
 We apply the argument from \cite[Corollary 7.6]{Reiner}, which tells us that the invariant factors of $L$ are $1, n-2,  (n-2)(2n-3)$ with $n-2$ having multiplicity of $n-3$.  From this it follows that $\mathbb{Z}^{n}/\mbox{im }L \cong \mathbb{Z}_{n-2}^{n-3} \oplus  \mathbb{Z}_{(n-2)(2n-3)}$, which completes the proof.
\end{proof}

We remark that $(n-2)(2n-3)$ are known as the \emph{second hexagonal numbers} \cite{A014105}.

\subsection{Wheel Graphs}\label{sec:wheel}
In this section we prove Theorem \ref{thm:wheelresult}, which describes the critical groups of signed wheel graphs. We begin by introducing the wheel graph and some terminology.

\begin{defn} For $n \geq 3$, the \emph{wheel graph} $W_n$ is the graph with $n+1$ vertices formed by connecting a universal sink vertex $q$ to all vertices of an $n$-cycle. 
\end{defn}

We will use $v_1,\ldots,v_n$ to label the nonsink vertices of $W_n$. See Figure \ref{fig:wheelgraph} for an example. The reduced Laplacian of the (unsigned) wheel $W_n$
 is the $n \times n$ matrix given by $$(L_{W_n})_{ij} = \begin{cases} 3, & \mbox{$i = j$} \\ -1, & \mbox{$i \equiv (j+1)$ or $i \equiv (j-1) \Mod{n}$} \\ 0, & \mbox{otherwise}. \end{cases}$$
 
 The reduced Laplacian of a signed wheel $(W_n)_\phi$ is obtained from changing the sign of the nondiagonal entries corresponding to negative edges. 
As usual let $-W_{n}$ denote the signed wheel graph where all edges not incident to the sink are negative.

\begin{figure}[h]
 \begin{center}
 
    \tikzset{every picture/.style={line width=0.75pt}} 

\begin{tikzpicture}[x=0.75pt,y=0.75pt,yscale=-1,xscale=1]

\draw   (363,138.5) -- (330.25,195.22) -- (264.75,195.22) -- (232,138.5) -- (264.75,81.78) -- (330.25,81.78) -- cycle ;
\draw    (330.25,195.22) -- (264.75,81.78) ;
\draw    (264.75,195.22) -- (330.25,81.78) ;
\draw    (232,138.5) -- (363,138.5) ;
\draw  [fill={rgb, 255:red, 0; green, 0; blue, 0 }  ,fill opacity=1 ] (228.2,138.5) .. controls (228.2,136.4) and (229.9,134.7) .. (232,134.7) .. controls (234.1,134.7) and (235.8,136.4) .. (235.8,138.5) .. controls (235.8,140.6) and (234.1,142.3) .. (232,142.3) .. controls (229.9,142.3) and (228.2,140.6) .. (228.2,138.5) -- cycle ;
\draw  [fill={rgb, 255:red, 0; green, 0; blue, 0 }  ,fill opacity=1 ] (260.95,195.22) .. controls (260.95,193.12) and (262.65,191.42) .. (264.75,191.42) .. controls (266.85,191.42) and (268.55,193.12) .. (268.55,195.22) .. controls (268.55,197.33) and (266.85,199.03) .. (264.75,199.03) .. controls (262.65,199.03) and (260.95,197.33) .. (260.95,195.22) -- cycle ;
\draw  [fill={rgb, 255:red, 0; green, 0; blue, 0 }  ,fill opacity=1 ] (326.45,195.22) .. controls (326.45,193.12) and (328.15,191.42) .. (330.25,191.42) .. controls (332.35,191.42) and (334.05,193.12) .. (334.05,195.22) .. controls (334.05,197.33) and (332.35,199.03) .. (330.25,199.03) .. controls (328.15,199.03) and (326.45,197.33) .. (326.45,195.22) -- cycle ;
\draw  [fill={rgb, 255:red, 0; green, 0; blue, 0 }  ,fill opacity=1 ] (260.95,81.78) .. controls (260.95,79.67) and (262.65,77.97) .. (264.75,77.97) .. controls (266.85,77.97) and (268.55,79.67) .. (268.55,81.78) .. controls (268.55,83.88) and (266.85,85.58) .. (264.75,85.58) .. controls (262.65,85.58) and (260.95,83.88) .. (260.95,81.78) -- cycle ;
\draw  [fill={rgb, 255:red, 0; green, 0; blue, 0 }  ,fill opacity=1 ] (326.45,81.78) .. controls (326.45,79.67) and (328.15,77.97) .. (330.25,77.97) .. controls (332.35,77.97) and (334.05,79.67) .. (334.05,81.78) .. controls (334.05,83.88) and (332.35,85.58) .. (330.25,85.58) .. controls (328.15,85.58) and (326.45,83.88) .. (326.45,81.78) -- cycle ;
\draw  [fill={rgb, 255:red, 0; green, 0; blue, 0 }  ,fill opacity=1 ] (359.2,138.5) .. controls (359.2,136.4) and (360.9,134.7) .. (363,134.7) .. controls (365.1,134.7) and (366.8,136.4) .. (366.8,138.5) .. controls (366.8,140.6) and (365.1,142.3) .. (363,142.3) .. controls (360.9,142.3) and (359.2,140.6) .. (359.2,138.5) -- cycle ;
\draw  [fill={rgb, 255:red, 0; green, 0; blue, 0 }  ,fill opacity=1 ] (293.7,138.5) .. controls (293.7,136.4) and (295.4,134.7) .. (297.5,134.7) .. controls (299.6,134.7) and (301.3,136.4) .. (301.3,138.5) .. controls (301.3,140.6) and (299.6,142.3) .. (297.5,142.3) .. controls (295.4,142.3) and (293.7,140.6) .. (293.7,138.5) -- cycle ;

\draw (292,149.4) node [anchor=north west][inner sep=0.75pt]    {$q$};
\draw (211,131.4) node [anchor=north west][inner sep=0.75pt]    {$v_{6}$};
\draw (259,63.4) node [anchor=north west][inner sep=0.75pt]    {$v_{1}$};
\draw (324,63.4) node [anchor=north west][inner sep=0.75pt]    {$v_{2}$};
\draw (368,132.4) node [anchor=north west][inner sep=0.75pt]    {$v_{3}$};
\draw (332.25,202.82) node [anchor=north west][inner sep=0.75pt]    {$v_{4}$};
\draw (251.25,202.82) node [anchor=north west][inner sep=0.75pt]    {$v_{5}$};

\end{tikzpicture}
    \captionsetup{width=1.0\linewidth}
  \captionof{figure}{The wheel graph $W_6$.}
  \label{fig:wheelgraph}

 \end{center}
\end{figure}
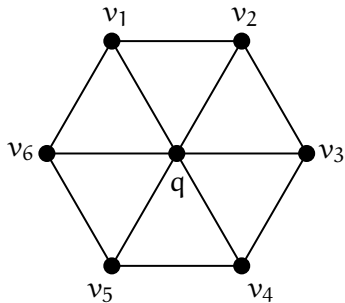

 In \cite{Biggs} Biggs computed ${\mathcal K}(W_n)$ for all $n$, and to recall his results we review some well-known integer sequences.
\begin{defn}\label{defn:Fib}
We let $f_n$ denote the $n$th \emph{Fibonacci number}, defined recursively by $f_{n} = f_{n-1} + f_{n-2}$ with base cases $f_0=0, f_1=1$.
\end{defn}
\begin{defn}\label{defn:Luc}
We let $\ell_n$ denote the $n$th \emph{Lucas number}, defined recursively by $\ell_{n} = \ell_{n-1} + \ell_{n-2}$ with base cases $\ell_0=2, \ell_1=1$.
\end{defn}

The result of Biggs is the following.

\begin{thm}[Section 9 of \cite{Biggs}]\label{thm:Biggswheel}
For any $n \geq 3$, the critical group of the unsigned wheel is given by:
$$\mathcal{K}(W_n) = \begin{cases}
\mathbb{Z}_{f_n} \oplus \mathbb{Z}_{5f_n}, & \mbox{$n$ is even} \\
\mathbb{Z}_{\ell_n} \oplus \mathbb{Z}_{\ell_n},  & \mbox{$n$ is odd }.\end{cases}$$
\end{thm}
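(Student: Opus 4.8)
The plan is to exploit the circulant structure of $L_{W_n}$ to collapse the $n\times n$ presentation $\mathcal{K}(W_n) = \mathbb{Z}^n/\im L_{W_n}$ down to a $2\times 2$ one, and then read off the Smith normal form. Writing $e_1,\dots,e_n$ for the images of the standard basis vectors, the columns of $L_{W_n}$ impose exactly the relations $e_{j+1} = 3e_j - e_{j-1}$ for each $j$ (indices mod $n$, so $e_0 = e_n$ and $e_{n+1}=e_1$). This is a linear recurrence, so every $e_j$ is a $\mathbb{Z}$-combination of the two generators $e_0, e_1$; encoding it by the transfer matrix $T = \begin{pmatrix} 3 & -1 \\ 1 & 0\end{pmatrix}$ one has $\left(\begin{smallmatrix} e_{j+1} \\ e_j\end{smallmatrix}\right) = T^{j}\left(\begin{smallmatrix} e_1 \\ e_0\end{smallmatrix}\right)$, and the cyclic closure conditions $e_n=e_0$, $e_{n+1}=e_1$ amount to the single matrix relation $(T^n - I)\left(\begin{smallmatrix} e_1 \\ e_0\end{smallmatrix}\right) = \vec{0}$. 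After eliminating $e_2,\dots,e_{n-1}$ via the recurrence I would conclude $\mathcal{K}(W_n) \cong \mathbb{Z}^2/\im(T^n - I)$, with a determinant check (against $\det L_{W_n}$, the spanning-tree count of $W_n$) confirming that no relation is lost.

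Next I would compute $T^n$ explicitly. Since the characteristic polynomial of $T$ is $t^2 - 3t + 1$, whose associated recurrence is the one satisfied by the even-indexed Fibonacci numbers, an easy induction gives $T^n = \begin{pmatrix} f_{2n+2} & -f_{2n} \\ f_{2n} & -f_{2n-2}\end{pmatrix}$, hence $T^n - I = \begin{pmatrix} f_{2n+2}-1 & -f_{2n} \\ f_{2n} & -f_{2n-2}-1\end{pmatrix}$. For a $2\times 2$ integer matrix the invariant factors are $d_1 = \gcd$ of all four entries and $d_2 = |\det|/d_1$. Using $f_{2n+2} = 3f_{2n}-f_{2n-2}$ one sees $f_{2n+2}-1$ lies in the ideal generated by $f_{2n}$ and $f_{2n-2}+1$, so $d_1 = \gcd(f_{2n},\, f_{2n-2}+1)$. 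Moreover $\det(T^n-I) = 1 - \operatorname{tr}(T^n) + 1 = 2 - \ell_{2n}$, recovering $|\mathcal{K}(W_n)| = \ell_{2n}-2$.

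The heart of the argument, and the step I expect to be the main obstacle, is the number-theoretic evaluation of $d_1 = \gcd(f_{2n},\,f_{2n-2}+1)$, which is where the parity split originates. Here I would apply $f_{2n} = f_n\ell_n$ together with the product identity $f_m\ell_k = f_{m+k} + (-1)^k f_{m-k}$, which after choosing indices appropriately gives $f_{2n-2}+1 = f_n\ell_{n-2}$ when $n$ is even and $f_{2n-2}+1 = f_{n-2}\ell_n$ when $n$ is odd. Combined with the coprimality of consecutive Fibonacci numbers and of consecutive Lucas numbers (so that $\gcd(f_n,f_{n-2}) = \gcd(\ell_n,\ell_{n-2}) = 1$), this yields $d_1 = f_n$ for $n$ even and $d_1 = \ell_n$ for $n$ odd. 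I would then finish by feeding in the identities $\ell_{2n}-2 = 5f_n^2$ ($n$ even) and $\ell_{2n}-2 = \ell_n^2$ ($n$ odd) to obtain $d_2 = |\det|/d_1 = 5f_n$ and $d_2 = \ell_n$ respectively, giving $\mathcal{K}(W_n)\cong\mathbb{Z}_{f_n}\oplus\mathbb{Z}_{5f_n}$ and $\mathbb{Z}_{\ell_n}\oplus\mathbb{Z}_{\ell_n}$ as claimed.
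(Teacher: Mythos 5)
Your proof is correct: the Tietze elimination down to the two generators $e_0,e_1$, the closed form $T^n=\begin{pmatrix} f_{2n+2} & -f_{2n}\\ f_{2n} & -f_{2n-2}\end{pmatrix}$, the reduction $d_1=\gcd(f_{2n},\,f_{2n-2}+1)$, the parity-dependent factorizations $f_{2n-2}+1=f_n\ell_{n-2}$ ($n$ even) and $f_{2n-2}+1=f_{n-2}\ell_n$ ($n$ odd), and the closing identities $\ell_{2n}-2=5f_n^2$ resp.\ $\ell_n^2$ all check out. Be aware, though, that the paper offers no proof of this statement to compare against: Theorem \ref{thm:Biggswheel} is imported from Biggs \cite{Biggs} and used as a black box (e.g.\ in Proposition \ref{prop:OddUnbalanced} and Theorem \ref{thm:wheelresult}). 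The closest internal comparison is with the paper's proofs of the signed analogues, Lemmas \ref{lem:OddCycleUnbalanced} and \ref{lem:unbalancedevenwheel}, and there your method is essentially theirs: reduce to two generators via the three-term recurrence, extract a $2\times 2$ presentation matrix with Fibonacci entries, compute the first invariant factor as a gcd via Fibonacci--Lucas product identities, and divide it into the determinant. Two genuine differences are worth noting. First, your transfer-matrix packaging of the recurrence is cleaner than the paper's coefficient-tracking $x_j=a_jx_2-b_jx_1$. Second, you obtain the group order internally, from $\det(T^n-I)=2-\operatorname{tr}(T^n)=2-\ell_{2n}$, whereas the paper pins down the determinant of its $2\times 2$ matrix externally, by a separate Matrix--Tree/TU-subgraph count (Propositions \ref{prop:OddUnbalanced} and \ref{prop:EvenUnbalanced}); your route is thus more self-contained, needing the spanning-tree count of $W_n$ only as an optional sanity check rather than as an input.
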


Theorem \ref{thm:wheelresult} is a generalization of Theorem~\ref{thm:Biggswheel} to signed wheel graphs. 
On the way to proving Theorem \ref{thm:wheelresult} we first consider critical groups of negative wheels $-W_n$. Here we get a uniform description that is independent of parity of $n$.

\begin{prop}\label{prop:OddUnbalanced}
For any $n \geq 3$, then we have $|\mathcal{K}(-W_{n})| = 5f_n^2$.

\end{prop}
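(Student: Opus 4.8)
The order of the critical group equals $\det L$ where $L = L_{-W_n}$, so the plan is to compute this determinant and identify it with $5f_n^2$. First I would observe that since every nonsink vertex of $-W_n$ has degree $3$ and every cycle edge is negative, the reduced Laplacian has the circulant form $L = 3I_n + C$, where $C$ is the adjacency matrix of the cycle $C_n$ (with $C_{ij}=1$ whenever $i \equiv j\pm 1 \pmod n$). Being circulant, its eigenvalues are immediate: $\lambda_k = 3 + 2\cos(2\pi k/n)$ for $k = 0, 1, \dots, n-1$, and hence $\det L = \prod_{k=0}^{n-1}\left(3 + 2\cos(2\pi k/n)\right)$.

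The key computational step is to evaluate this product in closed form. Writing $\omega = e^{2\pi i/n}$ and $z = \omega^k$, each factor becomes $3 + z + z^{-1} = z^{-1}(z^2 + 3z + 1)$. The quadratic $z^2 + 3z + 1$ factors as $(z + \alpha^2)(z + \beta^2)$, where $\alpha = \tfrac{1+\sqrt 5}{2}$ and $\beta = \tfrac{1 - \sqrt 5}{2}$, since $\alpha^2 + \beta^2 = 3$ and $\alpha^2\beta^2 = (\alpha\beta)^2 = 1$. Applying the elementary identity $\prod_{k=0}^{n-1}(a + \omega^k) = a^n - (-1)^n$ (obtained from $\prod_k(x - \omega^k) = x^n - 1$ at $x = -a$), together with $\prod_k \omega^{-k} = \omega^{-n(n-1)/2} = (-1)^{n-1}$, yields
\[\det L = (-1)^{n-1}\left(\alpha^{2n} - (-1)^n\right)\left(\beta^{2n} - (-1)^n\right).\]
Expanding and using $\alpha^{2n}\beta^{2n} = 1$ and $\alpha^{2n} + \beta^{2n} = \ell_{2n}$ collapses this to $\det L = \ell_{2n} - 2(-1)^n$.

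Finally I would invoke the standard Fibonacci--Lucas identity $\ell_{2n} = 5f_n^2 + 2(-1)^n$ (equivalently $\ell_n^2 - 5f_n^2 = 4(-1)^n$ combined with $\ell_{2n} = \ell_n^2 - 2(-1)^n$), which gives $\det L = 5f_n^2$ as desired. The main obstacle here is not conceptual but a matter of careful sign bookkeeping: the factor $(-1)^{n-1}$ from $\prod_k\omega^{-k}$ and the $(-1)^n$ terms coming from the root-of-unity identity must be tracked exactly, and it is worth confirming the final answer against the small cases $n = 3$ (giving $\det = 20 = 5\cdot 2^2$) and $n = 4$ (giving $\det = 45 = 5 \cdot 3^2$). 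An alternative that avoids complex numbers would express $\det L$ through the Chebyshev relation $\prod_k(x - 2\cos(2\pi k/n)) = 2\bigl(T_n(x/2) - 1\bigr)$ evaluated at $x = -3$, or set up a transfer-matrix recurrence for the cyclic determinant; but the circulant eigenvalue computation is the most direct route.
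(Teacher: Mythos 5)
Your computation is correct, but it takes a genuinely different route from the paper. The paper splits by parity: for even $n$ it observes that $-W_n$ is switching equivalent to $W_n$ (switch every other rim vertex) and quotes Biggs's count $|\mathcal{K}(W_n)| = 5f_n^2$; for odd $n$ it invokes the signed matrix-tree theorem (Theorem \ref{thm:MatrixTreeNice}), noting that the spanning $TU$-subgraphs of $-W_n$ are the ordinary spanning trees of $W_n$ together with one extra subgraph (the odd rim cycle plus the isolated hub) contributing $4$, giving $\det L = \ell_n^2 + 4$, and then closes with the identity $\ell_n^2 + 4 = 5f_n^2$ for odd $n$. You instead treat $L = 3I_n + C$ as a circulant, evaluate $\prod_k\bigl(3 + 2\cos(2\pi k/n)\bigr) = \ell_{2n} - 2(-1)^n$ via the golden-ratio factorization, and finish with $\ell_{2n} - 2(-1)^n = 5f_n^2$; I checked the sign bookkeeping and the small cases $n=3,4$, and they come out right. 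What your approach buys is uniformity in the parity of $n$ and independence from Biggs's theorem and the $TU$-subgraph machinery — it is entirely self-contained modulo standard Fibonacci--Lucas identities. What the paper's approach buys is a combinatorial interpretation of the determinant (the ``extra $4$'' is visibly the unique unbalanced unicyclic spanning subgraph) and economy, since it reuses results already established earlier in the paper. Both are complete proofs.
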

\begin{proof}
From Theorem \ref{thm:Biggswheel} it follows that the order of the critical group (and hence the number of spanning trees) of the unsigned wheel $W_n$ is $$|\mathcal{K}(W_n)| = \begin{cases}5f_n^2, & \mbox{$n$ is odd} \\ \ell_n^2, & \mbox{n is even.}\end{cases} $$

If $n$ is even, we see that $-W_n$ is switching equivalent to $W_n$ by performing a vertex switch at every other non-sink vertex. From Lemma \ref{lem:SmithBalanced} it follows that $\mathcal{K}(-W_{n})  = \mathcal{K}(W_{n})$  Hence, $|\mathcal{K}(-W_{n})|  = |\mathcal{K}(W_{n})| = 5f_n^2$.

If $n$ is odd, using Theorem \ref{thm:MatrixTreeNice}, we compute $\det(L_{-W_n})$ by counting spanning TU-subgraphs. We see that the set of spanning TU-subgraphs of $-W_n$ consists of the usual spanning trees of $W_n$ and a single subgraph consisting of the exterior cycle of the wheel (along with the isolated vertex $q$). From this count, we obtain that $\det(L_{-W_n}) = \ell_n^2 + 4$. Since $n$ is odd, Identity 53 from \cite{BQ} tells us that $\ell_n^2 + 4 = 5f_{n}^2$.
\end{proof}

Using similar arguments, we can find the order of $\mathcal{K}((W_n)_{\phi})$ where $n \geq 2$ is even and $(W_n)_{\phi}$ is unbalanced.

\begin{prop}\label{prop:EvenUnbalanced}
Suppose $n \geq 2$ is even and $(W_{n})_{\phi}$ is an unbalanced signed wheel. Then we have $|\mathcal{K}((W_{n})_{\phi})| = \ell_n^2$.

\end{prop}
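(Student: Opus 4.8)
The plan is to compute the order $|\mathcal{K}((W_n)_\phi)| = \det L_{(W_n)_\phi}$ directly via the Matrix--Tree Theorem for signed graphs, in close parallel with the treatment of $-W_n$ in Proposition~\ref{prop:OddUnbalanced}, but invoking the even-index case of the relevant Fibonacci--Lucas identity. Because the sink $q$ is the hub, deleting it from $W_n$ leaves the rim $n$-cycle, whose only cycle is the rim cycle itself. Hence in any spanning $TU$-subgraph $H$ every unicyclic component avoids $q$ (the sink lies in the tree component), so such a component must carry the full outer rim cycle; this forces $q$ into its own trivial tree component and leaves no room for a second cycle. Consequently $c(H)\in\{0,1\}$, and the reduced signed Matrix--Tree Theorem (Theorem~\ref{thm:Zaslavsky} together with Chaiken's refinement \cite{MR666857}, which specializes to Theorem~\ref{thm:MatrixTreeNice} in the negative case) yields
\[
\det L_{(W_n)_\phi} \;=\; \sum_{H} 4^{c(H)} \;=\; \tau(W_n) \;+\; 4\,[\,\text{the outer rim cycle is unbalanced}\,],
\]
where $\tau(W_n)$ is the number of ordinary spanning trees of $W_n$ and the second term records the single admissible $TU$-subgraph consisting of the outer cycle together with the isolated sink.

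First I would record $\tau(W_n) = 5f_n^2$ for even $n$, which is exactly the count already used in Proposition~\ref{prop:OddUnbalanced} (equivalently, the order of $\mathcal{K}(W_n)$ from Theorem~\ref{thm:Biggswheel}). The decisive step is then to verify that the outer rim cycle of $(W_n)_\phi$ is unbalanced, so that the extra subgraph contributes its factor of $4$. Here I would use that $\det L$ is a switching invariant (Lemma~\ref{lem:SmithBalanced}) to pass to the representative in which all spokes are positive; in that representative each rim edge carries the sign of its incident triangle, and the rim-cycle sign is the product of these triangle signs, so the outer cycle is unbalanced exactly when an odd number of triangles are negative. Granting this for the unbalanced wheels in question, the formula gives $\det L_{(W_n)_\phi} = 5f_n^2 + 4$, and the identity $\ell_n^2 - 5f_n^2 = 4(-1)^n$ (Identity~53 of \cite{BQ}, as in Proposition~\ref{prop:OddUnbalanced}) reads $\ell_n^2 = 5f_n^2 + 4$ for even $n$, whence $\det L_{(W_n)_\phi} = \ell_n^2$ and $|\mathcal{K}((W_n)_\phi)| = \ell_n^2$, as claimed.

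The main obstacle is twofold. The $TU$-subgraph enumeration is routine once one observes that the hub sink localizes every cycle except the rim, giving $c(H)\le 1$ with the outer cycle as the unique possible unicyclic contributor; the genuinely delicate point is pinning down \emph{when} that cycle contributes, i.e.\ identifying the unbalanced wheel with an unbalanced outer cycle (an odd number of negative triangles). This parity condition is precisely what separates the present $\ell_n^2$ case from the case where the formula instead returns $\tau(W_n)=5f_n^2$, and it is also where the even-parity hypothesis on $n$ enters, both in the value $\tau(W_n)=5f_n^2$ and in the sign of the Fibonacci--Lucas identity. As an alternative to the combinatorial count, one could instead evaluate the determinant of the circulant-type reduced Laplacian by a continuant/transfer-matrix recursion, which exhibits the same dependence on the rim-cycle sign; the Matrix--Tree route is cleaner and matches the method of Proposition~\ref{prop:OddUnbalanced}.
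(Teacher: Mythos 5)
Your route is the same as the paper's: both compute $\det L_{(W_n)_\phi}$ via the signed Matrix--Tree count of spanning $TU$-subgraphs, observe that the only candidate unicyclic component is the full rim cycle with $q$ isolated (so $c(H)\le 1$), add its contribution of $4$ to $\tau(W_n)=5f_n^2$, and finish with Identity~53 of \cite{BQ}. Your write-up is in fact more explicit than the paper's on the two key points: why the rim is the unique possible unicyclic component, and the fact that it contributes only when it is \emph{unbalanced}.

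However, the step you explicitly defer --- ``granting'' that every unbalanced even signed wheel has an unbalanced rim cycle --- is a genuine gap, and it cannot be closed as stated. In the signed Matrix--Tree count only unicyclic components containing a \emph{negative} cycle are admissible, and (as you correctly note) the sign of the rim cycle is the product of the signs of the $n$ triangles, a switching invariant. A wheel is unbalanced as soon as \emph{one} triangle is negative, but its rim is negative only when an \emph{odd} number of triangles are; these are not the same condition. For instance, for $n=4$ with exactly two negative triangles the wheel is unbalanced while the rim is balanced, the extra $TU$-subgraph does not contribute, and the count gives $\det L = \tau(W_4) = 5f_4^2 = 45 \neq 49 = \ell_4^2$. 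The paper's own proof has the same defect in a less visible form: it asserts that $W_n$ has only two switching classes and thereby reduces to a representative with a single negative rim edge (whose rim is indeed negative), but Lemma~\ref{lem: switchingclass} gives $2^{|E|-|V|+1}=2^n$ switching classes for $W_n$, so that reduction is unjustified. Your argument is therefore complete precisely for the unbalanced wheels with an odd number of negative triangles; for the remaining unbalanced classes the computation yields $5f_n^2$ rather than $\ell_n^2$, so the proposition as phrased needs either a restricted hypothesis or a separate argument that neither you nor the paper supplies.
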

\begin{proof} 
From Lemma \ref{lem: switchingclass}, we conclude that there are two switching equivalence classes of $W_n$. In particular we can assume $(W_n)_\phi$ has a single negative edge incident to two non-sink vertices. 

As in the proof of the previous Proposition, we use Theorem \ref{thm:MatrixTreeNice} and count the spanning TU-subgraphs. As in $-W_n$ for odd $n$ case, the set of spanning TU-subgraphs consists of the usual spanning trees of $W_n$ and a single subgraph consisting of the exterior cycle of the wheel (along with the isolated vertex $q$). From this count, we obtain that $\mbox{det}(L) =5f_n^2 + 4$. Since $n$ is even, Identity 53 from \cite{BQ} tells us that $5f_{n}^2+4 = \ell_n^2$.
\end{proof}

We next establish the structure of the critical group for unbalanced signed graphs $(W_n)_\phi$ for odd $n$. Since by Lemma \ref{lem: switchingclass} there are exactly two switching equivalence classes of $W_n$, it suffices to analyze $\mathcal{K}(-W_{n})$.

\begin{lem}\label{lem:OddCycleUnbalanced}
Let $m \geq 1$. Then we have $$\mathcal{K}(-W_{2m+1}) \cong \mathbb{Z}_{f_{2m+1}} \oplus \mathbb{Z}_{5f_{2m+1}}. $$
\end{lem}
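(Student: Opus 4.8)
The plan is to realize the critical group as the cokernel of an explicit $2\times 2$ integer matrix and then read off its Smith normal form. First I would record that the reduced Laplacian $L = L_{-W_{2m+1}}$ is the $n\times n$ circulant matrix $3I + P + P^{-1}$, where $n = 2m+1$ and $P$ is the cyclic shift (the corner entries $L_{1,n}=L_{n,1}=1$ come from the cycle edge $v_nv_1$, which is negative). Identifying $\mathbb{Z}^n$ with $\mathbb{Z}[x]/(x^n-1)$ by sending $P$ to $x$, the map $L$ becomes multiplication by $3 + x + x^{-1}$; since $x$ is a unit in this ring, multiplying the generator by $x$ does not change the ideal, so
\[\mathcal{K}(-W_{2m+1}) \cong \mathbb{Z}[x]/(x^n-1,\; x^2+3x+1) \cong \mathbb{Z}[\beta]/(\beta^n-1),\]
where $\beta$ satisfies $\beta^2+3\beta+1=0$. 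As $\mathbb{Z}[\beta]$ is free of rank $2$ over $\mathbb{Z}$ with basis $\{1,\beta\}$, this presents the critical group as the cokernel of a $2\times 2$ matrix, and in particular it has at most two nontrivial invariant factors.

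Next I would compute that matrix. The ideal $(\beta^n-1)$ is generated as a $\mathbb{Z}$-module by $\beta^n-1$ and $\beta^{n+1}-\beta$, so writing $\beta^k = a_k + b_k\beta$ the relation matrix is $\begin{pmatrix} a_n-1 & a_{n+1} \\ b_n & b_{n+1}-1\end{pmatrix}$. Using $\beta = -\varphi^2$ (with $\varphi$ the golden ratio) together with $\varphi^j = f_j\varphi + f_{j-1}$, one obtains the closed forms $a_k = (-1)^{k+1}f_{2k-2}$ and $b_k = (-1)^{k+1}f_{2k}$. For odd $n=2m+1$ this yields the relation matrix
\[\begin{pmatrix} f_{4m}-1 & -f_{4m+2} \\ f_{4m+2} & -f_{4m+4}-1 \end{pmatrix}.\]
A short computation using Catalan's identity $f_{4m}f_{4m+4} = f_{4m+2}^2 - 1$ and the relations $f_{4m+4}-f_{4m}=\ell_{4m+2}$ and $\ell_{4m+2} = 5f_{2m+1}^2 - 2$ shows its determinant equals $5f_{2m+1}^2$, consistent with the order computed in Proposition~\ref{prop:OddUnbalanced}.

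Finally I would pin down the first invariant factor $d_1 = \gcd(f_{4m}-1,\, f_{4m+2},\, f_{4m+4}+1)$. From $f_{4m+2} = f_{2m+1}\,\ell_{2m+1}$ we get $f_{2m+1}\mid f_{4m+2}$, and reducing modulo $f_{2m+1}$ via the Fibonacci addition formula together with Cassini's identity (which gives $f_{2m}^2 \equiv -1 \pmod{f_{2m+1}}$) yields $f_{4m}\equiv 1$ and $f_{4m+4}\equiv -1 \pmod{f_{2m+1}}$; hence $f_{2m+1}\mid d_1$. To rule out any larger factor, write $d_1 = f_{2m+1}\,t$; since $d_1 d_2 = 5f_{2m+1}^2$ and the invariant factors satisfy $d_1\mid d_2$, substitution forces $t^2\mid 5$, whence $t=1$. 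Therefore $d_1 = f_{2m+1}$ and $d_2 = 5f_{2m+1}$, giving $\mathcal{K}(-W_{2m+1})\cong \mathbb{Z}_{f_{2m+1}}\oplus\mathbb{Z}_{5f_{2m+1}}$.

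The step I expect to be the main obstacle is bookkeeping the Fibonacci and Lucas congruences cleanly and deriving the closed forms for $a_k,b_k$ without sign errors; by contrast, the reduction of the $n\times n$ problem to a $2\times 2$ cokernel (via the circulant/polynomial identification) and the final divisibility trick $t^2\mid 5$ are exactly what let the full \emph{group structure}, rather than merely the order, drop out with minimal computation.
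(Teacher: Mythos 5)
Your proposal is correct, and while it shares the paper's overall skeleton (reduce the $n\times n$ Smith normal form computation to a $2\times 2$ integer matrix with Fibonacci entries, then pin down the two invariant factors using the known order $5f_{2m+1}^2$), both key steps are executed by genuinely different means. For the reduction, the paper works with coordinates of vectors in $\im L$ and the three-term recurrence $x_i + x_{i+2} = -3x_{i+1}$, expressing everything in terms of $x_1, x_2$; you instead exploit the fact that $L_{-W_{2m+1}} = 3I + P + P^{-1}$ is circulant and present the cokernel as $\mathbb{Z}[x]/(x^n-1,\,x^2+3x+1) \cong \mathbb{Z}[\beta]/(\beta^n-1)$, which I have checked is valid (the ideal $(\beta^n-1)$ is indeed $\mathbb{Z}$-spanned by $\beta^n-1$ and $\beta^{n+1}-\beta$, and your closed forms $a_k = (-1)^{k+1}f_{2k-2}$, $b_k = (-1)^{k+1}f_{2k}$ and the determinant evaluation to $5f_{2m+1}^2$ all check out). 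For the first invariant factor, the paper computes the gcd of the matrix entries \emph{exactly}, via the factorizations $f_{4m+2}=f_{2m+1}\ell_{2m+1}$, $f_{4m}-1=f_{2m+1}\ell_{2m-1}$, $f_{4m-2}-3=f_{2m+1}\ell_{2m-3}$ together with a cited result that $\gcd(\ell_{2m+1},\ell_{2m-1},\ell_{2m-3})=1$ (invoked there only for $m\geq 2$); you prove only the one-sided divisibility $f_{2m+1}\mid d_1$ and then close with the observation that $d_1\mid d_2$ and $d_1d_2 = 5f_{2m+1}^2$ force $t^2\mid 5$, hence $t=1$. Your endgame is more self-contained (no external Lucas-gcd theorem needed) and treats $m=1$ uniformly; the trade-off is that the circulant presentation is special to $-W_n$, whereas the paper's recurrence method transfers directly to the non-circulant unbalanced even wheel handled in the next lemma.
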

\begin{proof}
For our argument we borrow techniques from Section 3 of \cite{Raza}, where the authors computed the critical group of subdivided wheel graphs.  We first show that the critical group ${\mathcal K}(-W_{2m+1})$ has at most two generators. For this let $L = L_{-W_{2m+1}}$, and suppose $\vec{x} = L\vec{z}$ for some $\vec{z} \in \mathbb{Z}^{2m+1}$. Notice that $x_i + x_{i+2} = -3x_{i+1}$ for all $i$, where the indices are considered modulo $2m+1$. From this, we can express all $x_i$'s as a linear combination of $x_1$ and $x_2$, letting us conclude that ${\mathcal K}(-W_{2m+1})$ has at most 2 generators. Writing $x_j = a_jx_{2} - b_jx_1$ for each $j$, one can check that  $a_1 = 0, b_1 = -1, a_2 = 1, b_2 = 0$, and $a_{j} := (-1)^jf_{2j-2}$ and $b_{j} := (-1)^{j+1}f_{2j-4}$ for $j \geq 3$. Notice we can extend this to $j$ beyond $2m+1$ as well, so we have $x_{1} = x_{2m+2} = a_{2m+2}x_2 - b_{2m+2}x_1$.

Therefore, the Smith normal form of $L$ has $2m-1$ diagonal entries that have value 1. From the previous paragraph, we have $x_{1} = a_{2m+2}x_2 - b_{2m+2}x_1$ and $a_{2m+1}x_2 - b_{2m+1}x_1= x_{2m+1} = -3x_1 - x_2$. This tells us that one only needs to evaluate the Smith normal form of \begin{equation}\label{eq: 2by2}
    A = \begin{pmatrix}
       a_{2m+2} & a_{2m+1}+1 \\ b_{2m+2}+1 & b_{2m+1}-3
    \end{pmatrix} = \begin{pmatrix}
       f_{4m+2}  & -f_{4m}+1 \\ -f_{4m}+1 & f_{4m-2}-3
    \end{pmatrix}.
\end{equation}
Now, to find the Smith normal form of $A$, we find positive integers $d_1, d_2$ such that $d_1^2d_2 = \mbox{det}(A)$ and $d_1 = \mbox{gcd}(f_{4m+2}, f_{4m}-1 , f_{4m-2}-3)$ (c.f. the proof of Theorem 4.1 in \cite{Raza}). The values $d_1$ and $d_1d_2$ are the nontrivial invariant factors of the Smith normal form of $A$ (and hence of $L$). Since all other diagonal entries of the Smith normal form of $L$ is $1$, we also get that $d_1^2d_2 = 5f_{2m+1}^2$ thanks to Proposition \ref{prop:OddUnbalanced}.

To find $d_1$, we utilize Identity 33 from \cite{BQ}, from which we obtain that $f_{4m+2} = f_{2m+1}\ell_{2m+1}$. Furthermore, by consulting Equations 6 and 8 of \cite{BHoggatt1975}, one can also obtain 
\[f_{4m}-1 = f_{4m}-f_2= f_{2m+1}\ell_{2m-1} \text{ and } f_{4m-2}-3 = f_{4m-2}-f_4= f_{2m+1}\ell_{2m-3}.\]
\noindent
From the main theorem of \cite{McDaniel1991DIL}, we have that $\mbox{gcd}(\ell_{2m+1}, \ell_{2m-1}, \ell_{2m-3}) = 1$ for $m \geq 2$, and hence \[\mbox{gcd}(f_{4m+2}, f_{4m}-1 , f_{4m-2}-3) = \mbox{gcd}(f_{2m+1}\ell_{2m+1}, f_{2m+1}\ell_{2m-1}, f_{2m+1}\ell_{2m-3}) = f_{2m+1}.\]

Therefore we get $d_1 = f_{2m+1}$ and $d_2 = 5$. We conclude that $$\mathcal{K}(-W_{2m+1}) = \mathbb{Z}_{f_{2m+1}} \oplus \mathbb{Z}_{5f_{2m+1}}.$$
\end{proof}

We now study the critical group of the unbalanced signed wheel for even $n = 2m$.

\begin{lem}\label{lem:unbalancedevenwheel}
Suppose $m \geq 2$, and let $(W_{2m})_{\phi}$ be a signed even wheel where all exterior edges except for one are negatively signed. Then, $$\mathcal{K}((W_{2m})_{\phi}) = \mathbb{Z}_{\ell_{2m}} \oplus \mathbb{Z}_{\ell_{2m}}.$$
\end{lem}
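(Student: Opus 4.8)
The plan is to mimic the strategy of Lemma \ref{lem:OddCycleUnbalanced}, reducing the computation to the Smith normal form of an explicit $2 \times 2$ integer matrix and then extracting its invariant factors via Fibonacci--Lucas identities. By Lemma \ref{lem:SmithBalanced} I may work with the representative (with $n = 2m$) in which the cycle edges $\{v_1,v_2\}, \dots, \{v_{n-1},v_n\}$ are negative and the single edge $\{v_n,v_1\}$ is positive; since the exterior cycle then carries $n-1 = 2m-1$ negative edges this graph is unbalanced, so by Proposition \ref{prop:EvenUnbalanced} its critical group has order $\ell_{2m}^2$.

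First I would present the cokernel $\mathbb{Z}^n/\im L$ by generators $g_1, \dots, g_n$ (images of the standard basis) with one relation per column of $L = L_{(W_{2m})_\phi}$. Each column $j$ with $2 \le j \le n-1$ is incident only to negative cycle edges and yields $g_{j-1} + 3g_j + g_{j+1} = 0$, i.e. the recurrence $g_{j+1} = -3g_j - g_{j-1}$; these $n-2$ relations let me eliminate $g_3, \dots, g_n$ and write $g_j = a_j g_2 - b_j g_1$, where $a_j,b_j$ are exactly the even-indexed Fibonacci coefficients already recorded in Lemma \ref{lem:OddCycleUnbalanced} (namely $a_j = (-1)^j f_{2j-2}$ and $b_j = (-1)^{j+1} f_{2j-4}$ for $j \ge 3$). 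In particular $\mathcal{K}((W_{2m})_\phi)$ has at most two generators $g_1,g_2$, and $\mathcal{K}((W_{2m})_\phi) \cong \mathrm{coker}(A)$, where $A$ records the two remaining relations coming from the two special columns $1$ and $n$ that are incident to the positive edge.

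Substituting the closed forms $a_{2m} = f_{4m-2}$, $b_{2m} = -f_{4m-4}$, $a_{2m-1} = -f_{4m-4}$, $b_{2m-1} = f_{4m-6}$ into the column-$1$ relation $3g_1 + g_2 - g_n = 0$ and the column-$n$ relation $-g_1 + g_{n-1} + 3g_n = 0$, and simplifying with $3f_{2k} = f_{2k+2} + f_{2k-2}$, I expect to obtain
\[
A = \begin{pmatrix} 3 - f_{4m-4} & 1 - f_{4m-2} \\ f_{4m-2} - 1 & f_{4m} \end{pmatrix}.
\]
The heart of the proof is then the invariant-factor computation. For a $2\times 2$ matrix the first invariant factor is the gcd of its entries and the product of the two invariant factors is $|\det A|$, so it suffices to show that the gcd of the entries equals $\ell_{2m}$ and that $|\det A| = \ell_{2m}^2$. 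Using the identities $\ell_{2m} f_{2m-2} = f_{4m-2} - 1$, $\ell_{2m} f_{2m-4} = f_{4m-4} - 3$, and $\ell_{2m} f_{2m} = f_{4m}$, every entry of $A$ is $\ell_{2m}$ times a Fibonacci number, and $\gcd(f_{2m-4}, f_{2m-2}, f_{2m}) = f_{\gcd(2m-4,\,2m-2,\,2m)} = f_2 = 1$; hence the gcd of the entries is $\ell_{2m}$. Combined with $|\det A| = \ell_{2m}^2$ (from Proposition \ref{prop:EvenUnbalanced}, or directly from the Catalan identity $f_{2m-2}^2 - f_{2m-4}f_{2m} = 1$), this forces the Smith normal form of $A$ to be $\mathrm{diag}(\ell_{2m}, \ell_{2m})$, giving $\mathcal{K}((W_{2m})_\phi) \cong \mathbb{Z}_{\ell_{2m}} \oplus \mathbb{Z}_{\ell_{2m}}$.

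The main obstacle is the bookkeeping around the two exceptional columns: unlike the fully negative odd wheel, the single positive edge breaks the cyclic symmetry, so I must track the boundary relations carefully and verify the Fibonacci--Lucas identities that turn the entries of $A$ into clean multiples of $\ell_{2m}$. Once those identities are in place, the gcd step is actually cleaner than in the odd case, since it reduces to the standard fact $\gcd(f_a,f_b) = f_{\gcd(a,b)}$ rather than to McDaniel's theorem on triples of Lucas numbers.
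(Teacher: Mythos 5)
Your proposal is correct and follows essentially the same route as the paper's proof: reduce the presentation to two generators via the three-term recurrence $g_{j+1}=-3g_j-g_{j-1}$, collapse the two boundary relations into a $2\times 2$ matrix whose entries are (up to sign) $f_{4m}$, $f_{4m-2}-1$, $f_{4m-4}-3$, factor each as $\ell_{2m}$ times a Fibonacci number, and conclude $d_1=\ell_{2m}$ from $\gcd(f_{2m},f_{2m-2},f_{2m-4})=f_2=1$ together with $|\det A|=\ell_{2m}^2$. The only differences are cosmetic (you place the positive edge at $\{v_n,v_1\}$ rather than $\{v_2,v_3\}$, which slightly streamlines the bookkeeping of the exceptional columns).
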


\begin{proof}

Without loss of generality, we suppose $(W_{2m})_{\phi}$ is the signed wheel where the only positive edge of the graph is $\{v_2, v_3\}$. The proof will be similar to that of Lemma \ref{lem:OddCycleUnbalanced}.

We first show that the critical group ${\mathcal K}((W_{2m})_{\phi})$ has at most two generators. For this let $L = L_{(W_{2m})_{\phi}}$, and suppose $\vec{x} = L\vec{z}$ for some $\vec{z} \in \mathbb{Z}^{2m}$. As before, notice that $x_i + x_{i+2} = -3x_{i+1}$ for all $i$ except $i=1,2$ (where the indices are considered modulo $2m$). We also have $x_1 - x_3 = -3x_2$ and $-x_2 + x_4 = -3x_3$. From this, we can express all $x_i$'s as a linear combination of $x_1$ and $x_2$, letting us conclude that ${\mathcal K}((W_{2m})_{\phi})$ has at most 2 generators. Writing $x_j = a_jx_{2} - b_jx_1$ for each $j$, one can check that $a_1 = 0, b_1 = -1, a_2 = 1, b_2 = 0$, $ a_3 = 3, b_3 = -1$, and for $j \geq 3$ we have $a_{j} = (-1)^{j+1}f_{2j-2}$ and $b_{j} = (-1)^{j}f_{2j-4}$. As before, we can extend this to $j$ beyond $2m$ as well, so we have $x_{1} = x_{2m+1} = a_{2m+1}x_2 - b_{2m+1}x_1$.

Therefore, the Smith normal form of $L$ has $2m$ diagonal entries that have value 1. From the previous paragraph, we have $-3x_{2m} - x_{2m-1} = a_{2m+1}x_2 - b_{2m+1}x_1$ and $a_{2m}x_2 - b_{2m}x_1= x_{2m} = -3x_1 - x_2$. It then suffices to evaluate the Smith normal form of \begin{equation*}
    A = \begin{pmatrix}
       a_{2m+1} & a_{2m}+1 \\ b_{2m+1}+1 & b_{2m}-3
    \end{pmatrix} = \begin{pmatrix}
       f_{4m}  & -f_{4m-2}+1 \\ -f_{4m-2}+1 & f_{4m-4}-3
    \end{pmatrix}.
\end{equation*}

Now, to find the nontrivial invariant factors of $L$ (and hence the Smith normal form of $A$), as above we need to find positive integers $d_1, d_2$ such that $d_1^2d_2 = |\text{det}(A)|$ and $d_1 = \gcd(f_{4m}, f_{4m-2}-1 , f_{4m-4}-3)$. The $d_1$ and $d_1d_2$ are the nontrivial invariant factors of the Smith normal form of $A$ and of $L$.  Since all other diagonal entries of the Smith normal form of $L$ are $1$, we also get that $d_1^2d_2 = \ell_n^2$ thanks to Proposition~\ref{prop:EvenUnbalanced}.

From Identity 33 from \cite{BQ}, we have $f_{4m} = f_{2m}\ell_{2m}$. Also by Equations 6 and 8 of \cite{BHoggatt1975}, we have $$f_{4m-2} - 1 = f_{4m-2} - (-1)^{2m-2}f_2 = f_{2m-2}\ell_{2m};$$  $$f_{4m-4} - 3 = f_{4m-4} - (-1)^{2m-4}f_4 = f_{2m-4}\ell_{2m}.$$
Hence $d_1 = \gcd(f_{2m}\ell_{2m}, f_{2m-2}\ell_{2m}, f_{2m-4}\ell_{2m})$.
An application of Theorem 6 of \cite{BQ} then gives us
\[\text{gcd}(f_{2m}, f_{2m-2}, f_{2m-4}) = f_{\text{gcd}(2m, 2m-2, 2m-4)} = f_{2} = 1.\]
\noindent
Therefore, $d_1 = \ell_{2m}$ and $d_2 = 1$. We conclude that $$\mathcal{K}((W_{2m})_{\phi}) = \mathbb{Z}_{\ell_{2m}} \oplus \mathbb{Z}_{\ell_{2m}}. $$
\end{proof}

We now combine our previous observations to provide a complete classification of critical groups of signed wheels.

\begin{thm}\label{thm:wheelresult}
.
Then for $n\geq 3$ we have:
$$\mathcal{K}((W_n)_{\phi}) = \begin{cases}
\mathbb{Z}_{f_n} \oplus \mathbb{Z}_{5f_n}, & \mbox{$n$ is odd and $(W_n)_{\phi}$ is unbalanced} \\
\mathbb{Z}_{\ell_n} \oplus \mathbb{Z}_{\ell_n}, & \mbox{$n$ is even and $(W_n)_{\phi}$ is unbalanced}\\\mathbb{Z}_{\ell_n} \oplus \mathbb{Z}_{\ell_n}, & \mbox{$n$ is odd and $(W_n)_{\phi}$ is balanced}\\
\mathbb{Z}_{f_n} \oplus \mathbb{Z}_{5f_n}, & \mbox{$n$ is even and $(W_n)_{\phi}$ is balanced} \end{cases}$$ and this gives all possible critical groups of $(W_n)_{\phi}$.
\end{thm}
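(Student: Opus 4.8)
The plan is to cut the problem down to two cases per parity and then quote the groundwork already in place. First I would record the key structural fact that $\mathcal{K}((W_n)_\phi)$ depends only on the signs of the \emph{rim} edges $\{v_i,v_{i+1}\}$. This is because the sink $q$ is the hub, so deleting its row and column from $\tilde L$ destroys all spoke information except a uniform contribution to the degrees; the reduced Laplacian $L=L_{(W_n)_\phi}$ therefore has constant diagonal $3$ and off-diagonal entries recording exactly the rim signs. Switching a rim vertex $v_i$ multiplies row $i$ and column $i$ of $L$ by $-1$ — the operation used in the proof of Lemma~\ref{lem:SmithBalanced} — which preserves the Smith normal form and enacts precisely a switching of the rim cycle $C_n$. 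Applying Lemma~\ref{lem: switchingclass} to $C_n$, where $|E|-|V|+1=1$, the rim has exactly two switching classes, distinguished by the parity of the number of negative rim edges; these are the balanced and unbalanced cases of the statement.

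With this reduction in hand, the two cases are each settled by a result above. When the rim is balanced I would switch $L$ to the reduced Laplacian of the all-positive wheel, so that $\mathcal{K}((W_n)_\phi)\cong\mathcal{K}(W_n)$ by Lemma~\ref{lem:SmithBalanced}, and then invoke Biggs' Theorem~\ref{thm:Biggswheel}: this gives $\mathbb{Z}_{\ell_n}\oplus\mathbb{Z}_{\ell_n}$ for odd $n$ and $\mathbb{Z}_{f_n}\oplus\mathbb{Z}_{5f_n}$ for even $n$. When the rim is unbalanced, every signed wheel lies in the single unbalanced rim class, so I am free to pick the most convenient representative for each parity: for odd $n$ this is $-W_n$, whose critical group is computed in Lemma~\ref{lem:OddCycleUnbalanced} to be $\mathbb{Z}_{f_n}\oplus\mathbb{Z}_{5f_n}$, and for even $n$ it is the wheel with a single positive rim edge, computed in Lemma~\ref{lem:unbalancedevenwheel} to be $\mathbb{Z}_{\ell_n}\oplus\mathbb{Z}_{\ell_n}$. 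Matching these four outputs against the parity of $n$ and the balance of the rim reproduces the four lines of the theorem, and because the rim admits only these two switching classes, the list is exhaustive, which is the final clause.

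The step I expect to be the main obstacle is the very first reduction: making precise that only the rim's switching class influences $L$, and that the row/column sign flips generated by rim-vertex switching are exactly the switchings of $C_n$. In particular one must verify that the two chosen representatives genuinely sit in the unbalanced rim class — that is, that $-W_n$ for odd $n$ and the single-positive-edge wheel for even $n$ each have an odd number of negative rim edges, so that they cannot be switched to the all-positive rim. It is worth emphasizing that the dichotomy governing the group is the balance of the rim cycle (the parity of negative rim edges) rather than the balance of the whole signed graph, since the reduced Laplacian is blind to the spokes; this is the subtle point that makes the four cases collapse onto only two isomorphism types. All of the Fibonacci--Lucas identity manipulation needed to turn determinants and gcd's into the stated invariant factors is already carried out inside Lemmas~\ref{lem:OddCycleUnbalanced} and~\ref{lem:unbalancedevenwheel}, so no additional identities are required at the level of the theorem.
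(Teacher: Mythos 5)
Your proposal is correct and follows the same route as the paper: reduce to two cases per parity via switching invariance of the critical group, then quote Theorem \ref{thm:Biggswheel} for the balanced case and Lemmas \ref{lem:OddCycleUnbalanced} and \ref{lem:unbalancedevenwheel} for the unbalanced one. Where you go beyond the paper is in the care you take with the initial reduction, and that care is not wasted. The paper's proof asserts that $W_n$ ``has two switching equivalence classes,'' which contradicts its own Lemma \ref{lem: switchingclass} (the count is $2^{|E|-|V|+1}=2^{n}$); what is actually true, as you observe, is that the reduced Laplacian sees only the rim signs and that switching a rim vertex acts on the rim exactly as a switching of $C_n$, so the critical group depends only on the parity of the number of negative rim edges. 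Your remark that the governing dichotomy is balance of the rim rather than balance of the whole signed wheel is in fact a correction to the statement as written: a wheel with one negative spoke and an all-positive rim is unbalanced in the paper's sense, yet its reduced Laplacian equals $L_{W_n}$, so its critical group is $\mathcal{K}(W_n)$, contradicting the ``unbalanced'' rows of the displayed formula. With ``balanced'' read as ``the rim cycle is positive'' (equivalently, an even number of negative rim edges), your argument is complete, and your checks that $-W_n$ for odd $n$ and the single-positive-rim-edge wheel for even $n$ each have an odd number of negative rim edges are exactly what is needed to place the two computed representatives in the unbalanced class and to conclude that the list of groups is exhaustive.
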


\begin{proof} 
Recall that a wheel $W_n$ has two switching equivalence classes. 
 If $(W_n)_\phi$ is balanced, the result follows from Theorem \ref{thm:Biggswheel}, and otherwise we use Lemmas \ref{lem:OddCycleUnbalanced} and \ref{lem:unbalancedevenwheel}.
 \end{proof}

\subsection{Fan Graphs}\label{sec:Fan}
In this section we prove Theorem \ref{thm:FanGroup}, which describes the critical groups of \emph{fan graphs}.  Recall that a wheel graph can be constructed by adding a universal sink to a cycle. If we instead add a universal sink to a path we get the class of fan graphs \cite{brandstadt1999graph}.

\begin{defn}
For an integer $n\geq 3$, the fan graph $F_n$ is defined as the graph obtained by adding a universal sink to a path graph on $n$ vertices.   
\end{defn}

\begin{figure}[h]
 \begin{center}
 
    \tikzset{every picture/.style={line width=0.75pt}} 

\begin{tikzpicture}[x=0.75pt,y=0.75pt,yscale=-1,xscale=1]

\draw (363,138.5) -- (330.25,195.22);
\draw (330.25,195.22) -- (264.75,195.22);
\draw (264.75,195.22) -- (232,138.5);
\draw (232,138.5) -- (264.75,81.78);
\draw (264.75,81.78)--(330.25,81.78);
\draw (330.25,81.78) -- (363,138.5);
\draw    (264.75,195.22) -- (264.75,81.78) ;
\draw    (264.75,195.22) -- (330.25,81.78) ;
\draw    (264.75,195.22) -- (363,138.5) ;
\draw  [fill={rgb, 255:red, 0; green, 0; blue, 0 }  ,fill opacity=1 ] (228.2,138.5) .. controls (228.2,136.4) and (229.9,134.7) .. (232,134.7) .. controls (234.1,134.7) and (235.8,136.4) .. (235.8,138.5) .. controls (235.8,140.6) and (234.1,142.3) .. (232,142.3) .. controls (229.9,142.3) and (228.2,140.6) .. (228.2,138.5) -- cycle ;
\draw  [fill={rgb, 255:red, 0; green, 0; blue, 0 }  ,fill opacity=1 ] (260.95,195.22) .. controls (260.95,193.12) and (262.65,191.42) .. (264.75,191.42) .. controls (266.85,191.42) and (268.55,193.12) .. (268.55,195.22) .. controls (268.55,197.33) and (266.85,199.03) .. (264.75,199.03) .. controls (262.65,199.03) and (260.95,197.33) .. (260.95,195.22) -- cycle ;
\draw  [fill={rgb, 255:red, 0; green, 0; blue, 0 }  ,fill opacity=1 ] (326.45,195.22) .. controls (326.45,193.12) and (328.15,191.42) .. (330.25,191.42) .. controls (332.35,191.42) and (334.05,193.12) .. (334.05,195.22) .. controls (334.05,197.33) and (332.35,199.03) .. (330.25,199.03) .. controls (328.15,199.03) and (326.45,197.33) .. (326.45,195.22) -- cycle ;
\draw  [fill={rgb, 255:red, 0; green, 0; blue, 0 }  ,fill opacity=1 ] (260.95,81.78) .. controls (260.95,79.67) and (262.65,77.97) .. (264.75,77.97) .. controls (266.85,77.97) and (268.55,79.67) .. (268.55,81.78) .. controls (268.55,83.88) and (266.85,85.58) .. (264.75,85.58) .. controls (262.65,85.58) and (260.95,83.88) .. (260.95,81.78) -- cycle ;
\draw  [fill={rgb, 255:red, 0; green, 0; blue, 0 }  ,fill opacity=1 ] (326.45,81.78) .. controls (326.45,79.67) and (328.15,77.97) .. (330.25,77.97) .. controls (332.35,77.97) and (334.05,79.67) .. (334.05,81.78) .. controls (334.05,83.88) and (332.35,85.58) .. (330.25,85.58) .. controls (328.15,85.58) and (326.45,83.88) .. (326.45,81.78) -- cycle ;
\draw  [fill={rgb, 255:red, 0; green, 0; blue, 0 }  ,fill opacity=1 ] (359.2,138.5) .. controls (359.2,136.4) and (360.9,134.7) .. (363,134.7) .. controls (365.1,134.7) and (366.8,136.4) .. (366.8,138.5) .. controls (366.8,140.6) and (365.1,142.3) .. (363,142.3) .. controls (360.9,142.3) and (359.2,140.6) .. (359.2,138.5) -- cycle ;

\draw (248,195.82) node [anchor=north west][inner sep=0.75pt]    {$q$};
\draw (209,131.4) node [anchor=north west][inner sep=0.75pt]    {$v_{1}$};
\draw (258,62.4) node [anchor=north west][inner sep=0.75pt]    {$v_{2}$};
\draw (323,62.4) node [anchor=north west][inner sep=0.75pt]    {$v_{3}$};
\draw (369,131.4) node [anchor=north west][inner sep=0.75pt]    {$v_{4}$};
\draw (335.25,195.82) node [anchor=north west][inner sep=0.75pt]    {$v_{5}$};

\end{tikzpicture}
    \captionsetup{width=1.0\linewidth}
  \captionof{figure}{The fan Graph $F_5$.}
  \label{fig:fangraph}

 \end{center}
\end{figure}
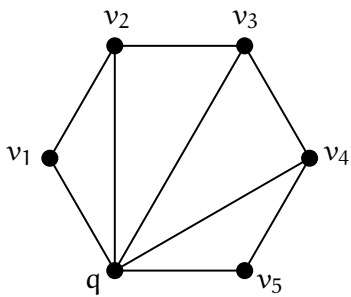

As usual we let $q$ denote the sink, and we use $v_1,\ldots,v_n$ to label the vertices of the underlying path. In particular, $F_n$ is obtained from the wheel graph $W_n$ by removing the edge between $v_1$ and $v_n$. See Figure \ref{fig:fangraph} for an illustration of $F_5$. The reduced Laplacian of $F_n$ is the $n \times n$ matrix given by $$(L_{F_n})_{ij} = \begin{cases} 2, & \mbox{$i=j \in \{1,n\}$} \\ 3, & \mbox{$i = j \notin \{1,n\}$} \\ -1, & \mbox{$|i-j| = 1$} \\ 
0,  & \mbox{otherwise}.  \end{cases}$$

Now suppose $(F_n)_{\phi}$ is any signed fan. 
To prove Theorem \ref{thm:FanGroup} we again first determine the order of $\mathcal{K}((F_n)_{\phi})$.

\begin{lem}\label{lem:OrderFan}
For any signed fan $(F_n)_{\phi}$ we have $|\mathcal{K}((F_n)_{\phi})| = f_{2n}$.

\end{lem}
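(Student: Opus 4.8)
The plan is to decouple the dependence on the sign function from an essentially unsigned spanning-tree count, and then evaluate a tridiagonal determinant.

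First I would observe that $F_n \setminus q$ is a path on the $n$ nonsink vertices, hence a tree. By Corollary \ref{cor:treegroup} this forces ${\mathcal K}((F_n)_{\phi}) \cong {\mathcal K}(|F_n|) = {\mathcal K}(F_n)$ for every sign function $\phi$; in particular the order is independent of the signing, so that $|{\mathcal K}((F_n)_{\phi})| = |{\mathcal K}(F_n)| = \det(L_{F_n})$, the number of spanning trees of the unsigned fan. Thus it suffices to prove $\det(L_{F_n}) = f_{2n}$, and from here the argument is purely a computation with the unsigned reduced Laplacian.

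Second I would exploit the tridiagonal shape of $L_{F_n}$, whose diagonal reads $(2,3,3,\dots,3,2)$ and whose off-diagonal entries all equal $-1$. Writing $D_k$ for the determinant of the top-left $k\times k$ principal submatrix, cofactor expansion along the last row yields the standard continuant recurrence $D_k = a_{kk}D_{k-1} - D_{k-2}$ (the two off-diagonal $-1$'s contributing a factor $(-1)(-1)=+1$), with the convention $D_0 = 1$. For the interior indices $1 \le k \le n-1$ the active diagonal entry is $3$, so $D_k = 3D_{k-1} - D_{k-2}$ with base values $D_1 = 2$ and $D_2 = 5$.

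Third I would identify these minors with odd-indexed Fibonacci numbers. Using the bisection identity $f_{2k+1} = 3f_{2k-1} - f_{2k-3}$ (an elementary consequence of the defining recurrence, of the same type as those drawn from \cite{BQ} in the wheel computations) together with $f_3 = 2 = D_1$ and $f_5 = 5 = D_2$, an easy induction gives $D_k = f_{2k+1}$ for $1 \le k \le n-1$. The corner entry $a_{nn}=2$ then gives $\det(L_{F_n}) = D_n = 2D_{n-1} - D_{n-2} = 2f_{2n-1} - f_{2n-3}$, and since $f_{2n-1} - f_{2n-3} = f_{2n-2}$ this collapses to $f_{2n-1} + f_{2n-2} = f_{2n}$, as desired. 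I expect the only delicate point to be the bookkeeping at the two corners of the matrix (the diagonal $2$'s in place of $3$'s) and a direct check of the degenerate small cases $n=1,2$, where the displayed reduced-Laplacian formula must be read with care ($F_1$ giving the $1\times 1$ matrix $(1)$ with $\det = 1 = f_2$); the conceptual work is entirely carried by the reduction through Corollary \ref{cor:treegroup}.
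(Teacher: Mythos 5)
Your proof is correct, and after the shared first step it takes a genuinely different route from the paper's. Both arguments begin identically: since $F_n \setminus q$ is a path, Corollary \ref{cor:treegroup} reduces everything to the unsigned fan, so the order is $\det(L_{F_n})$, the number of spanning trees of $F_n$. From there the paper counts spanning trees combinatorially, by induction on $n$ with a case split on whether the tree contains the edge $\{q,v_1\}$: the trees avoiding it contribute $f_{2n-2}$ and the trees containing it contribute $1+\sum_{k=1}^{n-1} f_{2(n-k)} = f_{2n-1}$, summing to $f_{2n}$. You instead evaluate the tridiagonal determinant directly via the continuant recurrence $D_k = a_{kk}D_{k-1} - D_{k-2}$, identify the leading principal minors with the odd-indexed Fibonacci numbers $f_{2k+1}$, and finish with $2f_{2n-1}-f_{2n-3}=f_{2n}$; I checked the recurrence, the base values $D_1=2=f_3$, $D_2=5=f_5$, and the corner step, and they are all correct (one cosmetic slip: the diagonal entry is $3$ only for $2\le k\le n-1$, with $D_1$ computed directly from the corner entry $2$, but your stated base values absorb this). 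The paper's count has the virtue of exhibiting the Fibonacci structure bijectively on the set of spanning trees, which connects to the level-sequence discussion at the end of Section \ref{sec:Fan}; your determinant computation is more mechanical and self-contained, and it harmonizes with the Smith-normal-form and continuant-style manipulations already used for the wheels in Section \ref{sec:wheel}, at the cost of invoking a Fibonacci bisection identity in place of a combinatorial decomposition. Your explicit handling of the degenerate cases $n=1,2$, where the displayed Laplacian formula must be read with care, is a worthwhile point that the paper addresses only by treating them as induction base cases.
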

\begin{proof}
First note that $F_n \backslash q$ is a path, so from Corollary \ref{cor:treegroup} we have $\mathcal{K}((F_n)_{\phi}) \cong \mathcal{K}(|F_n|)$. From the Matrix-Tree Theorem, it then suffices to show that the number of spanning trees of $F_n$ is given by $f_{2n}$. For this we use induction on $n$.

Note that $F_1$ is a single edge and $F_2$ is a triangle, and hence the result holds for $n=1,2$ (recall $f_0 = 0$ and $f_1 = 1$, so that $f_3$ = 3).
Now suppose $n \geq 3$ and consider a spanning tree $T$.  We let $e = \{q,v_1\}$ and consider two cases. First suppose $T$ does not contain $e$. Then $T$ consists of the edge $\{v_1, v_2\}$ along with a spanning tree of the graph obtained by removing $v_1$. Note that the subgraph induced by $v_2, ...., v_n, q$ is isomorphic to $F_{n-1}$ and hence by induction there are $f_{2n-2}$ such trees.

Next suppose $T$ does contain $e$. This means that $T$ is the result of adjoining path $q, v_1, ..., v_i$ for some $i$ and a spanning tree of the subgraph induced by $v_{i+1}, ... v_n, q$. Let $i$ be the largest index so that the path $v_1,\ldots,v_i$ is contained in $T$. If $i=n$ then $T$ is itself a path. If $i < n$, when we remove the vertices $v_1,\ldots,v_i$ we obtain a spanning tree of an induced subgraph which is isomorphic to $F_{n-i}$. By induction there are $1+\sum_{k=1}^{n-1}f_{2(n-k)} = f_{2n-1}$ spanning trees of this form.

We conclude that there are $f_{2n-2}+f_{2n-1}= f_{2n}$ spanning trees of $F_n$, which completes the proof.
\end{proof}

We next show that the critical group of a fan graph is cyclic, as usual via a calculation of the Smith normal form of its Laplacian. 

\begin{thm}\label{thm:FanGroup}
Let $(F_n)_{\phi}$ be any signed fan graph with sink given by the vertex of degree $n-1$. Then for $n \geq 1$ we have $\mathcal{K}((F_n)_{\phi}) \cong \mathbb{Z}_{f_{2n}}$.
\end{thm}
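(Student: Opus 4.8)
The plan is to reduce to the unsigned fan and then read off the Smith normal form of its reduced Laplacian by a generator-elimination argument, in the spirit of Lemma \ref{lem:OddCycleUnbalanced}. Since $F_n \setminus q$ is a path, hence a tree, Corollary \ref{cor:treegroup} gives $\mathcal{K}((F_n)_\phi) \cong \mathcal{K}(|F_n|)$, so it suffices to treat the unsigned fan whose reduced Laplacian is the tridiagonal matrix $L = L_{F_n}$ with diagonal $(2,3,\dots,3,2)$ and off-diagonal entries $-1$. Because Lemma \ref{lem:OrderFan} already supplies $|\mathcal{K}(|F_n|)| = f_{2n}$, the entire content of the theorem is that this group is \emph{cyclic}.

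To establish cyclicity I would work in the presentation $\mathcal{K}(|F_n|) = \mathbb{Z}^n / \operatorname{im} L$, in which the columns of $L$ give the defining relations on the images $e_1, \dots, e_n$ of the standard basis vectors. The first column yields $2e_1 - e_2 = 0$, i.e. $e_2 = 2 e_1$, and each interior column $j$ (for $2 \le j \le n-1$) yields $-e_{j-1} + 3 e_j - e_{j+1} = 0$, i.e. the recurrence $e_{j+1} = 3 e_j - e_{j-1}$. Iterating these $n-1$ relations expresses every $e_j$ as an integer multiple $c_j e_1$ of the single generator $e_1$, which immediately shows the group is cyclic.

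The remaining step identifies $c_j$ and the order. The sequence determined by $c_1 = 1$, $c_2 = 2$, $c_{j+1} = 3 c_j - c_{j-1}$ is exactly the odd-indexed Fibonacci numbers $c_j = f_{2j-1}$, which I would confirm from the identity $f_{2j+1} = 3 f_{2j-1} - f_{2j-3}$ (a consequence of $f_k = f_{k-1} + f_{k-2}$). The only relation not yet used is the last column, $-e_{n-1} + 2 e_n = 0$, which becomes $(2 f_{2n-1} - f_{2n-3}) e_1 = 0$; simplifying via $f_{2n-1} = f_{2n-2} + f_{2n-3}$ gives $2 f_{2n-1} - f_{2n-3} = f_{2n-1} + f_{2n-2} = f_{2n}$. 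Thus $\mathcal{K}(|F_n|)$ is cyclic with $f_{2n} e_1 = 0$, so its order divides $f_{2n}$; combined with Lemma \ref{lem:OrderFan} this forces $\mathcal{K}((F_n)_\phi) \cong \mathbb{Z}_{f_{2n}}$. The small cases $n = 1, 2$, where the interior recurrence is vacuous, should be checked directly.

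The argument is essentially bookkeeping, so there is no genuine obstacle; the only place demanding care is the boundary behavior. The degree-$2$ endpoints $v_1$ and $v_n$ make the first and last relations differ from the interior recurrence, and one must verify that these endpoint relations are consistent with the intended base values $c_1 = f_1$, $c_2 = f_3$ and produce exactly $f_{2n}$ rather than an off-by-one Fibonacci index. Pinning down the correct Fibonacci indexing throughout is the main thing to get right.
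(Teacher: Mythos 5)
Your proposal is correct and follows essentially the same route as the paper: reduce to the unsigned fan via Corollary \ref{cor:treegroup}, use Lemma \ref{lem:OrderFan} for the order, and show cyclicity by using the column relations of $L_{F_n}$ to express every generator as an integer multiple of $e_1$. The only difference is that you explicitly identify the multipliers as $f_{2j-1}$ and extract $f_{2n}\,e_1 = 0$ from the last relation, which the paper leaves implicit by appealing directly to the order count; this is a harmless (and slightly more self-contained) refinement of the same argument.
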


\begin{proof} As discussed above, it suffices to determine the critical group $\mathcal{K}(|F_n|)$. From Lemma \ref{lem:OrderFan} it is enough to show that the group is cyclic, i.e. the Smith normal form of $L = L_{|F_{n}|}$ has at most one entry that is larger than 1. For this suppose $\vec{x} = L\vec{z}$ for some $\vec{z} \in \mathbb{Z}^{n}$. From our description of $L_{F_n}$ we have the following relations:
\begin{equation}
\begin{split}
   3x_{i} &= x_{i-1} + x_{i+1}, \mbox{ for all $i \in \{2, 3, ..., n-1\}$}, \\
   2x_1 &= x_2, \\
    2x_{n} &= x_{n-1}.     
    \end{split}
  \end{equation}
    
Thus, for any $i \in \{1, 2, ..., n\}$ there exists an integer $b_i$ such that $x_i = b_ix_1$. Thus the Smith normal form of $L$ has at most one nontrivial diagonal entry, which from Lemma \ref{lem:OrderFan} must be $f_{2n}$. We conclude that $\mathcal{K}(|F_{n}|) \cong \mathbb{Z}_{f_{2n}}$.
\end{proof}

We remark that the computation of ${\mathcal K}(F_n)$ for the unsigned case is an exercise in \cite[Exercise 2.8(e)]{CorryPerk}. Also, while we were preparing this document we were made aware of related results of Selig from \cite{Selig}.  Here the author also considers (unsigned) wheel and fan graphs, and constructs bijections between the set of critical configurations and various combinatorial objects that give an interpretation of the \emph{level} of the configuration. In this context the number of critical configurations of level $k$ can be seen to correspond to a Tutte polynomial evaluation. 
As we have seen, performing a single vertex switching preserves the number of critical configurations and even the isomorphism type of the critical group (see Lemma \ref{lem:SmithBalanced}). However, the level sequence can change and it is an open question to understand the resulting combinatorial structure.


\section{Further Directions}\label{sec:Further}

In this work we have initiated the study of chip-firing on signed graphs, but many natural questions remain.  In this section we discuss some possible further directions of research.

Given a chip-firing pair $(L,M)$, Theorem \ref{thm:KeyIdea} provides a way to understand the $z$-superstable (and critical) configurations of $L$ in terms of the corresponding configurations of $M$. However the precise relationship remains mysterious.

To illustrate this, we consider our running example $G_\phi$ from Section \ref{sec:example}. For the critical configurations $\ccvector{7,6,2}$ and $\ccvector{6,5,2}$, we see that taking the floor of inverse image under $LM^{-1}$ yields the vectors $\ccvector{\frac{2}{3},\frac{11}{6},2}$ and $\ccvector{\frac{2}{3},\frac{4}{3},2}$. Note that taking the floors of each vector yield $\ccvector{0,1,2}$, which (according to Theorem \ref{thm:KeyIdea}) is critical for $G$. On the other hand, one can check that $\ccvector{1,1,2}$ and $\ccvector{1,0,2}$ are critical configurations for $G$ that \emph{cannot} be obtained by floors of inverse images of critical configurations of $G_\phi$.

Similarly, the configurations $\ccvector{0,0,1}$ and $\ccvector{0,1,1}$ and superstable for $G$, but do not arise as floors of inverse images of $z$-superstable configurations of $G_\phi$.  Note that $\ccvector{1,1,2}$ and $\ccvector{0,1,1}$ are in the same equivalence class defined by $M$, and similarly for $\ccvector{1,0,2}$ and $\ccvector{0,0,1}$. Is this always true?

For another, more extreme, example we consider the case of odd negative cycles $-C_{2m+1}$ from Section \ref{sec:cycle}. Here one can see that taking floors of the inverse images of $z$-superstable configurations always results in the zero vector $\vec{0}$. Indeed it is this fact that gives rise to the duality described in Theorem \ref{thm:PetitDuality}. These observations lead to the following question.

\begin{question}
    For a given signed graph $G_{\phi}$, can one describe the set of superstable (resp. critical) configurations of $G$ that one obtains by taking the floor of the inverse image of $z$-superstable (resp. critical) configurations of $G_\phi$? How does this depend on the choice of $\phi$?
\end{question}

For our next question, recall that for unsigned graphs we have a simple duality between critical and superstable configurations (see Section 2.6.5 of \cite{Klivans}).  In particular, for a graph $G$, the configuration $\vec{c}_{\max}$ is a critical configuration with the property that $\vec{d}$ is superstable if and only if $\vec{c}_{\max} - \vec{d}$ is critical.  In Theorem \ref{thm:PetitDuality} we describe a version of duality for a class of signed cycles, but no simple correspondence seems to exist. In general we are looking for a bijection between two equinumerous sets of lattices points in a certain fixed rational cone, which itself seems to be an interesting geometric question.

\begin{question}
Does there exist a duality between the set of $z$-superstable configurations and the set of critical configurations? Can we use Theorem~\ref{thm:KeyIdea} to rephrase the question in terms of duality of the underlying unsigned graph?
\end{question}

Our next question also addresses a desired bijection. We have seen in Lemma \ref{lem:SmithBalanced} that vertex switching preserves the critical group of a signed graph. In particular, if a signed graph is obtained from another via a vertex switching, both signed graphs have the same number of critical and $z$-superstable configurations.  However, it is not clear how the set of configurations themselves change, even in small examples.

To illustrate this, if we again consider our running example $G_\phi$ from Section \ref{sec:example} and switch at the vertex $v_3$, we obtain a new signed graph $H_\psi$ with chip-firing pair $(L,M)$, where 

\[L = \begin{pmatrix} 3 & +1 & +1 \\ +1 & 2 & +1 \\ +1 & +1 & 3 \end{pmatrix}; \quad \quad M = \begin{pmatrix} 3 & -1 & -1 \\ -1 & 2 &-1 \\ -1 & -1 & 3 \end{pmatrix}; \quad \quad LM^{-1} = \begin{pmatrix} \frac{11}{4} & 3 & \frac{9}{4} \\ 2 & 3 & 2 \\ \frac{9}{4} & 3 & \frac{11}{4} \end{pmatrix}. \]

The set of $z$-superstable configurations for $H_\psi$ are given by 
 $$\left\{ \ccvector{5,5,5}, \ccvector{8,6,7}, \ccvector{4,4,4}, \ccvector{0,0,0}, \ccvector{3,3,3}, \ccvector{7,6,8}, \ccvector{9,7,8}, \ccvector{10,8,9}, \ccvector{2,2,2}, \ccvector{1,1,1}, \ccvector{9,8,10}, \ccvector{8,7,9} \right\}.$$

The images under $ML^{-1}$ are

$$\left\{ \ccvector{0,\frac{5}{3},0}, \ccvector{\frac{5}{2},0,\frac{1}{2}}, \ccvector{0,\frac{4}{3},0}, \ccvector{0,0,0}, \ccvector{0,1,0}, \ccvector{\frac{1}{2}, 0, \frac{5}{2}}, \ccvector{\frac{5}{2},\frac{1}{3}, \frac{1}{2}}, \ccvector{\frac{5}{2},\frac{2}{3},\frac{1}{2}}, \ccvector{0,\frac{2}{3},0}, \ccvector{0,\frac{1}{3}, 0}, \ccvector{\frac{1}{2},\frac{2}{3},\frac{5}{2}}, \ccvector{\frac{1}{2},\frac{1}{3},\frac{5}{2}} \right\}.$$ 

If we compare these to the $z$-superstable configurations of the original $G_\phi$, we see that both are sets consisting of $12$ elements, but there does not seem to be a clear relationship. Hence we ask the following.

\begin{question}
    If $G_\psi$ is obtained from $G_\phi$ via a vertex switching, can one describe a bijection between the underlying $z$-superstable (or critical) configurations?
\end{question}

We would also like to understand connections between chip-firing on signed graphs and other combinatorial objects. In the case of chip-firing on a graph $G$,  the superstable configurations coincide with the class of $G$-parking functions (with the case of $G = K_n$ particularly well-studied). By work of Merino \cite{Merino}, the underlying degree sequence (that is, the number of superstable configurations of degree $i$) also relates to Tutte polynomial evaluations, and in particular \emph{activity} of spanning trees. It would be interesting to understand the combinatorial properties of $z$-superstable configurations of signed graphs. 

\begin{question}
For a signed graph $G_\phi$ with chosen sink $q$, does the degree sequence of $z$-superstable configurations (defined in an appropriate way) have any combinatorial meaning?
\end{question}

In a related story, a number of bijections between $G$-parking functions and spanning trees of a graph $G$ have been studied \cite{ChePyl}, generalizing Dhar's algorithm \cite{PhysRevLett.64.1613}. We would like to extend this story to the case of signed graphs, where the set of spanning trees is replaced by the bases of the underlying matroid, and where we ask for a \emph{multijection} in the sense of \cite{McD}.

We next turn to critical groups. We have seen that if $G$ is graph with the property that $G \backslash q$ is a tree, then all choices of signed graphs on $G$ yield the same critical group ${\mathcal K}(G_\phi) \cong {\mathcal K}(G)$. In addition, we have seen that critical groups are invariant under vertex switching. For arbitrary graphs $G$, however, it is not clear how ${\mathcal K}(G_\phi)$ depends on ${\mathcal K}(G)$ or how much structure (if any) is preserved. In our examples, we observe that the number of invariant factors of ${\mathcal K}(G_\phi)$ is equal to that of ${\mathcal K}(G)$.

\begin{question}
For a fixed graph $G$, what are the possible groups ${\mathcal K}(G_\phi)$ that can be obtained as we vary the choice of $\phi$? Does the structure of these groups relate to that of ${\mathcal K}(G)$?
\end{question}


Finally, we note that many of the constructions and results we discussed in this work can be generalized to the setting of a chip-firing pair $(L,M)$, for an arbitrary $M$-matrix $M$. In the case of signed graphs, we used the reduced Laplacian $M = L_G$ of a graph $G$ and changed the sign of certain off-diagonal entries (corresponding to negative edges) to obtain the invertible matrix $L$.  The same construction can be performed on an arbitrary integral $M$-matrix $M$, and many of our results carry through. For instance, we recover Proposition~\ref{prop:positive} as well as Theorem~\ref{thm:zchigeneral} for the case of $M$-matrices with nonnegative row sums.  

Even more generally, a number of our results follow from Theorem~\ref{thm:KeyIdea}, which holds for chip-firing pairs $(L,M)$, for an \emph{arbitrary} invertible integral matrix $L$ and integral $M$-matrix $M$. This suggests that our results could be generalized to this setting. We see signed graphs as a particularly natural case where one can apply the theory of chip-firing on a pair $(L,M)$, but it would be interesting to apply our results to other settings.

\section*{Acknowledgements}
This research was conducted as part of an REU at Texas State University during Summer 2022, with funding from the NSA and NSF grant \#1757233. We are grateful to these organizations for their financial support and for providing a fruitful work environment. We thank Carly Klivans for sharing her insights on chip-firing, as well as two anonymous referees for helpful comments and corrections on an earlier version of this paper.

\bibliographystyle{plain}
\bibliography{chipfiringsigned_ver2.bib}

\begin{thebibliography}{10}

\bibitem{Bacher}
Roland Bacher, Pierre de~la Harpe, and Tatiana Nagnibeda.
\newblock The lattice of integral flows and the lattice of integral cuts on a
  finite graph.
\newblock {\em Bull. Soc. Math. France}, 125(2):167--198, 1997.

\bibitem{FundBapat}
R.~B. Bapat.
\newblock {\em Graphs and matrices}.
\newblock Universitext. Springer, London; Hindustan Book Agency, New Delhi,
  2010.

\bibitem{BQ}
Arthur~T. Benjamin and Jennifer~J. Quinn.
\newblock {\em Proofs that really count}, volume~27 of {\em The Dolciani
  Mathematical Expositions}.
\newblock Mathematical Association of America, Washington, DC, 2003.
\newblock The art of combinatorial proof.

\bibitem{BHoggatt1975}
G.~E. Bergum and V.~E. Hoggatt, Jr.
\newblock Sums and products for recurring sequences.
\newblock {\em Fibonacci Quart.}, 13:115--120, 1975.

\bibitem{Biggs}
N.~L. Biggs.
\newblock Chip-firing and the critical group of a graph.
\newblock {\em J. Algebraic Combin.}, 9(1):25--45, 1999.

\bibitem{brandstadt1999graph}
A.~Brandstadt, V.B. Le, and J.P. Spinrad.
\newblock {\em Graph Classes: A Survey}.
\newblock Monographs on Discrete Mathematics and Applications. Society for
  Industrial and Applied Mathematics, 1999.

\bibitem{MR666857}
Seth Chaiken.
\newblock A combinatorial proof of the all minors matrix tree theorem.
\newblock {\em SIAM J. Algebraic Discrete Methods}, 3(3):319--329, 1982.

\bibitem{ChePyl}
Denis Chebikin and Pavlo Pylyavskyy.
\newblock A family of bijections between {$G$}-parking functions and spanning
  trees.
\newblock {\em J. Combin. Theory Ser. A}, 110(1):31--41, 2005.

\bibitem{CorryPerk}
Scott Corry and David Perkinson.
\newblock {\em Divisors and sandpiles}.
\newblock American Mathematical Society, Providence, RI, 2018.
\newblock An introduction to chip-firing.

\bibitem{PhysRevLett.64.1613}
Deepak Dhar.
\newblock Self-organized critical state of sandpile automaton models.
\newblock {\em Phys. Rev. Lett.}, 64:1613--1616, Apr 1990.

\bibitem{DMSR}
Anton Dochtermann, Eli Meyers, Rahgav Samavedam, and Alex Yi.
\newblock Integral flow and cycle chip-firing on graphs.
\newblock {\em Ann. Comb.}, 25(3):595--616, 2021.

\bibitem{gabrielov}
Andrei Gabrielov.
\newblock Asymmetric abelian avalanches and sandpiles.
\newblock {\em Preprint}, pages 93--65, 1994.

\bibitem{GuzKliMmatrices}
Johnny Guzm\'{a}n and Caroline Klivans.
\newblock Chip-firing and energy minimization on {M}-matrices.
\newblock {\em J. Combin. Theory Ser. A}, 132:14--31, 2015.

\bibitem{GuzKlivans}
Johnny Guzm\'{a}n and Caroline Klivans.
\newblock Chip firing on general invertible matrices.
\newblock {\em SIAM J. Discrete Math.}, 30(2):1115--1127, 2016.

\bibitem{Adinkras}
Kevin Iga, Caroline Klivans, Jordan Kostiuk, and Chi~Ho Yuen.
\newblock Eigenvalues and critical groups of {A}dinkras.
\newblock {\em Adv. in Appl. Math.}, 143:Paper No. 102450, 45, 2023.

\bibitem{Klivans}
Caroline~J. Klivans.
\newblock {\em The mathematics of chip-firing}.
\newblock Discrete Mathematics and its Applications (Boca Raton). CRC Press,
  Boca Raton, FL, 2019.

\bibitem{McDaniel1991DIL}
Wayne~L. McDaniel.
\newblock The {G.C.D.} in {L}ucas sequences and {L}ehmer number sequences.
\newblock 1991.

\bibitem{McD}
Alex McDonough.
\newblock A family of matrix-tree multijections.
\newblock {\em Algebr. Comb.}, 4(5):795--822, 2021.

\bibitem{Merino}
Criel Merino~L\'{o}pez.
\newblock Chip firing and the {T}utte polynomial.
\newblock {\em Ann. Comb.}, 1(3):253--259, 1997.

\bibitem{Ram}
Ram~Melkote Moore.
\newblock Chip-firing on signed graphs.
\newblock Bachelors Thesis, Texas State University, 2021.

\bibitem{A014105}
{OEIS Foundation Inc.}
\newblock {E}ntry {A}a014105 in the {O}n-{L}ine {E}ncyclopedia of {I}nteger
  {S}equences, 2022.
\newblock \url{https://oeis.org/A014105}.

\bibitem{Plemmons}
R.~J. Plemmons.
\newblock {$M$}-matrix characterizations. {I}. {N}onsingular {$M$}-matrices.
\newblock {\em Linear Algebra Appl.}, 18(2):175--188, 1977.

\bibitem{PostnikovShapiro03}
Alexander Postnikov and Boris Shapiro.
\newblock Trees, parking functions, syzygies, and deformations of monomial
  ideals.
\newblock {\em Trans. Amer. Math. Soc.}, 356(8):3109--3142, 2004.

\bibitem{Raza}
Zahid Raza.
\newblock On the critical group of certain subdivided wheel graphs.
\newblock {\em Punjab Univ. J. Math. (Lahore)}, 47(2):57--64, 2015.

\bibitem{Reiner}
Victor Reiner and Dennis Tseng.
\newblock Critical groups of covering, voltage and signed graphs.
\newblock {\em Discrete Math.}, 318:10--40, 2014.

\bibitem{Selig}
Thomas Selig.
\newblock Combinatorial aspects of sandpile models on wheel and fan graphs.
\newblock {\em European J. Combin.}, 110:Paper No. 103663, 23, 2023.

\bibitem{Zaslavsky}
Thomas Zaslavsky.
\newblock Signed graphs.
\newblock {\em Discrete Appl. Math.}, 4(1):47--74, 1982.

\end{thebibliography}

\end{document}